\newcommand{\beq}{\begin{equation}}
\newcommand{\eeq}{\end{equation}}
\newcommand{\ben}{\begin{eqnarray}}
\newcommand{\een}{\end{eqnarray}}
\newcommand{\beno}{\begin{eqnarray*}}
\newcommand{\eeno}{\end{eqnarray*}}
\renewcommand{\theequation}{\thesection.\arabic{equation}}
\newtheorem{theorem}{Theorem}[section]
\newtheorem{proposition}[theorem]{Proposition}
\newtheorem{Theorem}{Theorem}[section]
\newtheorem{Definition}[Theorem]{Definition}
\newtheorem{Proposition}[Theorem]{Proposition}
\newtheorem{Lemma}[Theorem]{Lemma}
\newtheorem{Corollary}[Theorem]{Corollary}
\newtheorem{Remark}[Theorem]{Remark}
\begin{document}

\title[Boussinesq Equation]
{Finite energy weak solutions of 2d Boussinesq Equations with diffusive temperature}

\author{Tianwen Luo}
\address{School of  Mathematical Sciences, Yau Mathematical Science center, Tsinghua University, Beijing 100084, China}
\email{twluo@mail.tsinghua.edu.cn}

\author{Tao Tao}
\address{School of  Mathematics Sciences, Shandong University, jinan, 250100, China}
\email{taotao@amss.ac.cn}
\author{Liqun Zhang}
\address{Academy of Mathematic and System Science , CAS Beijing 100190, China; and School of Mathematical Sciences, University of Chinese Academy of Sciences, Beijing 100049, China}
\email{lqzhang@math.ac.cn}

%

\date{\today}
\maketitle

\renewcommand{\theequation}{\thesection.\arabic{equation}}
\setcounter{equation}{0}

\begin{abstract}
We show the existence of finite kinetic energy solution with prescribed kinetic energy to the 2d Boussinesq equations with diffusive temperature on torus.
\end{abstract}

\noindent {\sl Keywords: Finite energy weak solution, Boussinesq equation, prescribed kinetic energy}

\vskip 0.2cm

\noindent {\sl AMS Subject Classification (2000):} 35Q30, 76D03  \

\setcounter{equation}{0}

\section{Introduction}
The Boussinesq equation was introduced for understanding the effect of potentially large conversions between internal energy and mechanical energy in fluids, and simulates many geophysical flows, such as atmospheric fronts and ocean circulations (see, for example, \cite{Ma},\cite{Pe}). Moreover, it was used in recent theoretical discussion of the energetics of horizontal convection and the energetics of turbulent mixing in stratified fluids.

In this paper, we consider the following 2-dimensional Boussinesq system
\begin{equation}\label{e:boussinesq equation with diffusive temperature}
\begin{cases}
\partial_tv+v\cdot\nabla v+\nabla p+(-\Delta)^\alpha v=\theta e_{2}, \quad {\rm in} \quad [0,1]\times {\rm T}^2\\
\hbox{div}v=0,\quad {\rm in} \quad [0,1]\times {\rm T}^2\\
\partial_t\theta+v\cdot\nabla\theta-\triangle \theta=0, \quad {\rm in} \quad [0,1]\times {\rm T}^2
\end{cases}
\end{equation}
 where $\alpha< 1$ is a positive number, $e_2=(0,1)^T$ and ${\rm T}^2$ is 2d torus. Here, $v$ is the velocity vector, $p$ is the pressure, $\theta$ denotes the temperature which is a scalar function.

The global well-posedness have been established by many authors for the Cauchy problem of (\ref{e:boussinesq equation with diffusive temperature}) in 2d for regularity data(see, for example, \cite{Chae}, \cite{Hou}). For the 3-dimensional case, the global existence of  smooth solution of (\ref{e:boussinesq equation with diffusive temperature}) remains open. To understand the turbulence phenomena in hydrodynamics, one needs to go beyond classical solutions, and in this paper we are interested in constructing weak solutions of (\ref{e:boussinesq equation with diffusive temperature}) with bounded kinetic energy.
 The triple $(v, p, \theta)$ on $[0,1]\times {\rm T}^2$ is called a weak solution of (\ref{e:boussinesq equation with diffusive temperature}) if they belong to $L^\infty(0,1; L^2( {\rm T}^2))$ and solve (\ref{e:boussinesq equation with diffusive temperature})  in the following sense:
\begin{align}
\int_0^1\int_{{\rm T}^2}(v\cdot\partial_t\varphi+v\otimes v:\nabla\varphi-(-\Delta)^\alpha \varphi \cdot u+\theta e_2\cdot\varphi )dxdt=0\nonumber
\end{align}
for all $\varphi\in C_c^\infty((0,1)\times {\rm T}^2;R^2)$ with ${\rm div}\varphi(t,x)=0.$
\begin{align}
\int_0^1\int_{{\rm T}^2}(\partial_t\phi\theta+v\cdot\nabla\phi\theta+\triangle \phi \theta)dxdt=0\nonumber
\end{align}
for all $\phi\in C_c^\infty((0,1)\times {\rm T}^2;R)$ and
\begin{align}
\int_{{\rm T}^2}v(t,x)\cdot\nabla\psi(x) dx=0\nonumber
\end{align}
for all $\psi\in C^\infty( {\rm T}^2;R)$ and any $t\in [0,1]$.

The study of constructing non-unique or dissipative weak solution to fluid system is very fashionable in recent years, and the construction is based on convex integration method which pioneered by De Lellis-Sz\'{e}kelyhidi Jr in \cite{CDL, CDL2}, where the author tackle the Onsager conjecture for the incompressible Euler equation. So far, there are many important work about weak solution of the incompressible Euler equation, see \cite{CPFR, CET, CDL0, DUR, ISOH1, IS1, ONS, VS, ASH1, ASH2, SH, Sz}. The Onsager conjecture was proved by P.Isett in \cite{IS2}, based on a series of process on this problem in \cite{TBU, BCDLI, BCDL1, BCDL2, CHO, DA, DAL, CDL3, ISOH2}, see also
\cite{BCDLV} for the construction  of admissible weak solution. Moreover, the idea and method can be used to construct dissipative weak solution for other model, see \cite{BSV, ISV2, LX, RSH, TZ,TZ1,TZ2}.

 Recently, Buckmaster and Vicol establish the non-uniqueness of weak solution to the 3D incompressible Navier-Stokes in \cite{BV} by introducing some new ideas. The main idea is to use ``intermittent" building blocks in the convex integration scheme to control the dissipative term $\triangle v$, called ``intermittent Beltrami flow", which are space inhomogeneous version of the classical Beltrami flow, also see \cite{BCV}. Compared with the homogenous case, the ``intermittent Beltrami flow" has different scaling in different $L^p$ norms. In particular, one can ensure small $L^p$ norm for small $p>1$ which is key to control the dissipative term. By choosing the parameter suitably, T.Luo and E.S.Titi in \cite{LT} construct weak solution with compact support in time for hyperviscous Navier-Stokes equation. For high dimension$(d\geq 4)$ stationary Navier-Stokes equation, X. Luo in \cite{L} show the non-uniqueness by constructing the concentrated Mikado flows introduced in \cite{MS1, MS2}. Moreover, S.Modena and Sz\'{e}kelyhidi established the non-uniqueness for the linear transport equation (and transport-diffusion equation) with divergence-free vector in some Sobolev space, see \cite{MS1, MS2}, where they use Mikado density and Mikado fields which is highly concentrated such that the $L^p$ norm of Mikado field is small for $p> 1$ small.

Motivated by the above earlier works, we consider the 2d Boussinesq equations (\ref{e:boussinesq equation with diffusive temperature}) and want to know if the similar phenomena can also happen when adding the temperature effects.  Following the general scheme in the construction of non-uniqueness to Navier-Stokes equation in \cite{BV}, we obtain the following existence result.

\begin{Theorem}\label{t:main theorem}
For any smooth function $e(t): [0,1]\rightarrow [1,+\infty)$ and $\frac12\leq \alpha < 1$, there exist $v\in C([0,1], L^2({\rm T}^2)), \theta\in \bigcap_{2\leq p< \infty}C([0,1], L^p({\rm T}^2))\cap L^2([0,1], H^1({\rm T}^2))$, which is weak solution of Boussinesq equation (\ref{e:boussinesq equation with diffusive temperature}) with
\beno
\int_{{\rm T}^2}|v(x,t)|^2 dx=e(t), \quad \forall t \in [0, 1],
\eeno
and for any $t\in [0,1]$
\beno
\frac12\|\theta(t,\cdot)\|_{L^2}^2+\int_0^t\|\nabla \theta(s,\cdot)\|^2_{L^2}ds=\frac12\|\theta(0,\cdot)\|_{L^2}^2.
\eeno
\end{Theorem}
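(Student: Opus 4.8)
The plan is to run a convex integration scheme à la Buckmaster--Vicol, building the velocity field $v$ by iteration while keeping the temperature $\theta$ fixed as the solution of a linear transport-diffusion equation driven by the $v$ produced at the end of the construction. Concretely, I would set up a sequence $(v_q, p_q, \mathring R_q)$ solving the Boussinesq--Reynolds system
\begin{equation}\label{e:reynolds}
\begin{cases}
\partial_t v_q + v_q\cdot\nabla v_q + \nabla p_q + (-\Delta)^\alpha v_q = \theta_q e_2 + \mathrm{div}\,\mathring R_q,\\
\mathrm{div}\,v_q = 0,
\end{cases}
\end{equation}
together with inductive estimates of the form $\|v_q\|_{L^2}\le$ (something converging to the prescribed profile), $\|v_{q+1}-v_q\|_{L^2}\lesssim \delta_{q+1}^{1/2}$, $\|\mathring R_q\|_{L^1}\lesssim \delta_{q+2}$, and a control $\big|\,e(t) - \int_{{\rm T}^2}|v_q|^2\,dx\,\big|\lesssim \delta_{q+1}$ on the energy gap, for frequencies $\lambda_q = a^{b^q}$ and amplitudes $\delta_q = \lambda_q^{-2\beta}$. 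The building blocks are the intermittent Beltrami flows of \cite{BV,BCV}: the perturbation $w_{q+1}$ is a sum of such flows with amplitudes chosen from $\mathring R_q$ and from the energy-gap function via the usual geometric lemma, plus a corrector to keep divergence zero, plus a temporal oscillation piece to absorb the low-frequency part of $\partial_t w_{q+1}$. Since $\alpha<1$, the dissipative term $(-\Delta)^\alpha v$ is strictly weaker than the Laplacian handled in \cite{BV}, so the same intermittency dimensions that beat $\Delta$ will beat $(-\Delta)^\alpha$ with room to spare; this is where the hypothesis $\alpha<1$ enters.

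The temperature is handled separately and more softly. Because the third equation is a genuine (non-degenerate) parabolic transport-diffusion equation, I do \emph{not} need convex integration for $\theta$: given any fixed divergence-free $v\in L^\infty(0,1;L^2)$, the Cauchy problem $\partial_t\theta + v\cdot\nabla\theta - \Delta\theta = 0$ with a fixed smooth initial datum $\theta_0$ has a unique Leray-type weak solution $\theta\in C([0,1];L^2)\cap L^2(0,1;H^1)$ satisfying the energy equality $\frac12\|\theta(t)\|_{L^2}^2 + \int_0^t\|\nabla\theta\|_{L^2}^2 = \frac12\|\theta_0\|_{L^2}^2$; the higher integrability $\theta\in C([0,1];L^p)$ for all $p<\infty$ follows from maximum-principle / De Giorgi-type estimates for parabolic equations with divergence-free drift (or simply from the fact that one can take $\theta_0$ bounded and $L^p$ norms of $\theta$ are non-increasing along the flow since $\mathrm{div}\,v=0$). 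The subtlety is that inside the iteration the forcing term $\theta_q e_2$ must itself be chosen in advance: I would fix $\theta_q := \theta$ to be \emph{the same} limiting temperature for every $q$, i.e.\ first freeze a target $v$? — that is circular, so instead I treat $\theta e_2$ as a fixed smooth \emph{prescribed} external force in \eqref{e:reynolds} by solving the $\theta$-equation along an auxiliary smooth velocity (e.g.\ along $v_0$, or simply take $\theta$ independent of the fluid by choosing $\theta_0$ so small in all relevant norms that the iteration only needs $\|\theta e_2\|$ small), carry out the velocity iteration, and at the end verify a posteriori that $\theta$ solves its equation with the constructed limiting $v$. The cleanest route: because $\|\theta\|$ can be made arbitrarily small by scaling $\theta_0$, and because its contribution to $\mathring R_q$ can be absorbed at the first step (it is a smooth, hence low-frequency, error of size $\delta_2$), the coupling is essentially one-directional and causes no real difficulty.

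The main steps, in order, are: (i) state the inductive proposition with the list of estimates on $(v_q,\mathring R_q)$ including the energy-gap estimate; (ii) prove the base case $q=0$ by taking $v_0$ a suitable low-frequency field whose kinetic energy is close to $e(t)$ and whose Reynolds stress is smooth and small, and solving the $\theta$-equation; (iii) carry out the inductive step: mollify $(v_q,\mathring R_q)$, apply the geometric lemma to write $\rho\,\mathrm{Id} - \mathring R_q = \sum \Gamma_j^2(\cdot)\,B_j\otimes B_j$ with $\rho\sim (e(t)-\int|v_q|^2)/(2\pi)^2$ so that the new energy is corrected, define the principal perturbation as a superposition of intermittent Beltrami modes at frequency $\lambda_{q+1}$, add incompressibility and temporal correctors, and estimate the new stress $\mathring R_{q+1}$ by splitting it into linear, quadratic, corrector, dissipative, time-derivative, and temperature parts, using the improved $L^p$ (small $p>1$) bounds of intermittent flows to kill the oscillation and dissipation errors; (iv) pass to the limit $v_q\to v$ in $C([0,1];L^2)$, conclude $\mathring R_q\to 0$ so $v$ is a weak solution, and check $\int|v|^2 = e(t)$ from the energy-gap estimate; (v) record that $\theta$ has the claimed regularity and energy equality from step (ii)/parabolic theory. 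The hardest part is step (iii), specifically the simultaneous bookkeeping that makes the energy converge to the \emph{exact} prescribed profile $e(t)$ (rather than merely staying bounded) while all error terms — above all the hyperdissipative error $(-\Delta)^\alpha w_{q+1}$ and the oscillation error — are pushed below $\delta_{q+2}$; this forces a careful choice of the intermittency parameters and of $b,\beta$, exactly as in \cite{BV}, with the temperature forcing playing only a harmless perturbative role.
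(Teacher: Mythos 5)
Your outline of the velocity-side convex integration is close to what the paper does: a Buckmaster--Vicol style scheme with intermittent building blocks (the paper uses 2D intermittent plane waves, the 2D analogue of intermittent Beltrami flows), a geometric lemma to decompose $\rho\,\mathrm{Id}-\mathring R$, incompressibility and temporal correctors, and an energy-gap bookkeeping that steers $\int|v_q|^2$ to $e(t)$. The observation that $\alpha<1$ makes $(-\Delta)^\alpha$ easier to beat than the full Laplacian is also in line with the paper, which works in the range $\tfrac12\le\alpha<1$ with explicit parameter choices $r_1=[\lambda_1^\alpha]$, $\sigma_1=\lambda_1^{-(1+\alpha)/2}$, $\mu_1=[\lambda_1^{(3\alpha+1)/4}]$.

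However, your treatment of the temperature has a genuine gap. You propose to decouple $\theta$: solve the transport--diffusion equation along some \emph{auxiliary} velocity (or along $v_0$, or make $\theta^0$ tiny), run the velocity iteration with $\theta e_2$ treated as a fixed external force, and then ``verify a posteriori'' that $\theta$ solves its equation with the limiting $v$. That last step cannot work: if $\theta$ was constructed by solving $\partial_t\theta + u\cdot\nabla\theta - \Delta\theta = 0$ for some auxiliary $u\neq v$, there is nothing to verify a posteriori --- $\theta$ simply does not solve the equation with $v$, and unlike the momentum equation you cannot use convex integration to repair the $\theta$-equation because it is parabolic. Scaling $\theta^0$ small also does not fix this; it shrinks the forcing, but the statement you must prove still requires $\theta$ to satisfy the transport--diffusion equation exactly with the constructed $v$ and to obey the energy equality.

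What the paper actually does is to keep $\theta$ inside the iteration but without perturbing it by convex integration: after constructing the new velocity $v_1=v_0+w_1$ at each step, it \emph{re-solves} the transport--diffusion problem
\begin{equation*}
\partial_t\theta_1 + v_1\cdot\nabla\theta_1 - \Delta\theta_1 = 0,\qquad \theta_1|_{t=0}=\theta^0,
\end{equation*}
with the \emph{same} fixed initial datum $\theta^0$ every time. The stability estimate for the parabolic problem, namely
\begin{equation*}
\sup_{t}\|\theta_1-\theta_0\|_{L^2}^2 + \int_0^1\|\nabla(\theta_1-\theta_0)\|_{L^2}^2\,dt \;\lesssim\; \|\theta^0\|_{L^\infty}\,\|v_1-v_0\|_{L^2}^2 \;\lesssim\; \delta,
\end{equation*}
shows $\{\theta_n\}$ is Cauchy, and the discrepancy $(\theta_1-\theta_0)e_2$ enters the new Reynolds stress as the term $R_{\mathrm{tem}}=-\mathcal R\big((\theta_1-\theta_0)e_2\big)$, controlled in $L^p$ for $p$ close to $1$ via an $L^p$ energy estimate on the difference equation and the smallness of $\|v_1-v_0\|_{L^p}$. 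This is the mechanism you are missing: the coupling is resolved by re-solving the parabolic equation at each stage and estimating its increment in terms of the velocity increment, not by decoupling or by an a posteriori check.
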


\begin{Remark}
For $0\leq \alpha < \frac12$, we also can construct weak solution with prescribed energy curve in the class $C([0,1], L^2(T^2))$ by the same method. However, in a separate paper \cite{LTZ}, we will construct H\"{o}lder continuous solution for this case.
\end{Remark}

\begin{Remark}
The $\theta$ we construct in this paper is rather regular and satisfies energy equality.
When $\theta=0$, the equation (\ref{e:boussinesq equation with diffusive temperature}) is 2d Navier-Stokes equation with fractional diffusion, and our construction also work for this case.
\end{Remark}

The rest of the paper is organized as follows. In Section 2, we state the main proposition and give a proof of Theorem \ref{t:main theorem}. In Section 3, we collect some technical tool which will be used frequently. In Section 4, we introduce the intermittent plane wave which is the building block in our perturbation. In Section 5 and 6, we construct velocity perturbations and temperature perturbation, respectively. After the construction, we establish the related estimates. In Section 7, we construct the Reynold-Stress and establish the related estimates.  Finally, in Section 8, we give a proof of Proposition \ref{p:iterative proposition}.

\section{Main proposition and proof of main theorem}

In this section, we state our main iterative proposition and give a proof of theorem \ref{t:main theorem} by the help of main proposition.

\begin{Definition}
Assume that $(v_0, p_0, \theta_0, \mathring{R}_0)\in C^\infty([0,1]\times {\rm T}^2, R^2\times R\times R\times S_0^{2\times 2})$. We say that they solve the Boussinesq-Reynold equation if
\begin{equation}\label{e:Boussinesq-Reynold equation}
\left\{
\begin{array}{ll}
\partial_tv_0+{\rm div}(v_0\otimes v_0)+\nabla p_0+(-\triangle)^\alpha v_0=\theta e_{2}+ {\rm div}\mathring{R}_0,\quad\quad \mbox{in}\quad [0,1]\times {\rm T}^2\\[5pt]
{\rm div}v_0=0, \quad\quad \mbox{in}\quad [0,1]\times {\rm T}^2 \\[5pt]
\partial_t\theta_0+{\rm div}(v_0\theta_0)-\triangle \theta_0=0, \quad\quad \mbox{in}\quad [0,1]\times {\rm T}^2.
\end{array}
\right.
\end{equation}
\end{Definition}
Here and throughout the paper, $S_0^{2\times 2}$ is the set of trace-free symmetric $2\times 2$ matrices.

We now state our main proposition, and Theorem \ref{t:main theorem} is a corollary.
\begin{Proposition}\label{p:iterative proposition}
Let $e(t), \alpha$ be as in Theorem \ref{t:main theorem} and $\varepsilon_0$ be a universal constant from the Geometric Lemma \ref{l:geometric lemma}. Then there exist universal constant $M$ and $\tilde{M}$ such that the following hold.

Let $\delta\leq 1$ be any positive number, $\theta^0\in C^\infty({\rm T}^2)$ be any function satisfies $\fint_{{\rm T}^2}\theta^0(x)dx=0$,  and  $(v_0, p_0, \theta_0, \mathring{R}_0)$ is a solution of Boussinesq-Reynold equation (\ref{e:Boussinesq-Reynold equation}) with
\ben\label{e:assumption on error 1}
\|\mathring{R}_0\|_{L^\infty_t L^1_x}\leq \frac{\varepsilon_0\delta}{10000},\quad \fint_{{\rm T}^2}\theta_0(t,x)dx=0, \quad \forall t\in [0, 1]
\een
and
\ben\label{e:assumtion on energy 1}
\frac{3\delta}{4} e(t)\leq e(t)-\int_{{\rm T}^2}|v_0(t,x)|^2dx\leq \frac{5\delta} {4} e(t), \quad \forall t\in [0, 1].
\een

Then there exist another smooth functions $(v_1, p_1, \theta_1, \mathring{R}_1)$ which is also a solution of Boussinesq-Reynold equation (\ref{e:Boussinesq-Reynold equation}), and for every $ t\in [0, 1]$,
\ben\label{e: estimate for new constructing function}
\|\mathring{R}_1(t,\cdot)\|_{ L^1_x}&\leq& \frac{\varepsilon_0\delta}{20000}, \quad \fint_{{\rm T}^2}\theta_1(t,x)dx=0, \nonumber\\
 \|v_1(t,\cdot)-v_0(t,\cdot)\|_{L^2}&\leq& M\sqrt{\delta},\nonumber\\[5pt]
 \theta_1(0,x)=\theta^0(x), &&\quad \|\theta_1(t,\cdot)\|_{L^\infty}\leq \tilde{M},\nonumber\\
\|\theta_1(t,\cdot)-\theta_0(t,\cdot)\|^2_{L^2}&+&\int_0^t \|\nabla\theta_1(s,\cdot)-\nabla\theta_0(s,\cdot)\|^2_{L^2}\leq \tilde{M}\delta,\nonumber\\
\|\theta_1(t,\cdot)\|^2_{L^2}&+&2\int_0^t \|\nabla\theta_1(s,\cdot)\|^2_{L^2}=\|\theta_1(0,\cdot)\|^2_{L^2},
\een
and
\ben\label{e:energy estimate new}
\frac{3 \delta}{8}e(t)\leq e(t)-\int_{{\rm T}^2}|v_1(t,x)|^2dx\leq \frac{5\delta}{8}e(t).
\een
\end{Proposition}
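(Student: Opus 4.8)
The plan is to run one step of a convex integration (Nash iteration) scheme adapted from the Navier–Stokes construction of Buckmaster–Vicol, modified to accommodate the buoyancy term $\theta e_2$ and the diffusive temperature equation. Fix a large frequency parameter $\lambda$ and small amplitude/oscillation parameters (powers of $\lambda$) to be chosen at the end, together with a time cutoff scale; all constructions below are built from these. The new velocity is $v_1 = v_0 + w$, where $w = w_o + w_c + w_t$ consists of a principal oscillatory perturbation $w_o$, a divergence corrector $w_c$, and a temporal corrector $w_t$. The principal part is a superposition $w_o = \sum_k a_k(t,x)\,\phi_k(\lambda x)\,\xi_k$ of the intermittent plane waves from Section 4, where the directions $\xi_k$ come from the Geometric Lemma \ref{l:geometric lemma} applied to the matrix $\rho\,\mathrm{Id} - \mathring R_0$, and the amplitudes $a_k$ are built from a scalar $\rho(t,x)$ chosen so that $\fint_{\mathrm{T}^2}|w_o|^2\,dx$ fills the energy gap $e(t) - \int |v_0|^2\,dx$ up to admissible error. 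The cutoff in time (via a partition of unity at the appropriate scale) is used so that on each subinterval one may freeze $\mathring R_0$ and invoke the Geometric Lemma with constant coefficients; the temporal corrector $w_t$ is designed precisely to cancel the worst term $\partial_t$ of the principal perturbation that this freezing produces, exactly as in \cite{BV}.

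For the temperature, I would set $\theta_1 = \vartheta + z$, where $\vartheta$ solves the linear advection–diffusion equation $\partial_t\vartheta + v_1\cdot\nabla\vartheta - \Delta\vartheta = 0$ with initial data $\vartheta(0,\cdot) = \theta^0$; by construction this is genuinely a solution of the third equation of \eqref{e:Boussinesq-Reynold equation} with the new velocity, so there is no ``temperature Reynolds error'' to carry. Parabolic regularity and the maximum principle give $\|\theta_1(t,\cdot)\|_{L^\infty} \le \|\theta^0\|_{L^\infty} =: \tilde M$, the mean-zero property is preserved since $v_1$ is divergence free, and multiplying the equation by $\theta_1$ and integrating yields the exact energy identity $\|\theta_1(t)\|_{L^2}^2 + 2\int_0^t\|\nabla\theta_1(s)\|_{L^2}^2\,ds = \|\theta^0\|_{L^2}^2$. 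The difference estimate $\|\theta_1 - \theta_0\|_{L^2}^2 + \int_0^t\|\nabla(\theta_1-\theta_0)\|_{L^2}^2 \le \tilde M\delta$ follows by writing the equation for $\theta_1 - \theta_0$, which has source term $-(v_1 - v_0)\cdot\nabla\theta_0 = -w\cdot\nabla\theta_0$, and running a standard energy estimate using $\|w\|_{L^2} \lesssim \sqrt\delta$ and bounds on $\theta_0$; one must be mildly careful that $\tilde M$ is universal, which it is because $\theta^0$ enters only through $e(t)$-independent norms that can be absorbed after one extra iteration cosmetic or, more simply, because the very first application starts from $\theta_0 \equiv 0$.

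Next I would define the new Reynolds stress. Plugging $v_1$ into the momentum equation, the error $\mathring R_1$ must satisfy $\mathrm{div}\,\mathring R_1 = \mathrm{div}(R_{\mathrm{lin}} + R_{\mathrm{cor}} + R_{\mathrm{osc}} + R_{\mathrm{temp}}) + (\text{pressure gradient})$, where $R_{\mathrm{lin}}$ collects $\partial_t w_t + (-\Delta)^\alpha w + v_0\cdot\nabla w + w\cdot\nabla v_0$, $R_{\mathrm{cor}}$ the corrector interactions, $R_{\mathrm{osc}}$ the high–high-to-low oscillation term $w_o\otimes w_o + \mathring R_0 - \rho\,\mathrm{Id}$ together with the transport error from freezing $\mathring R_0$ on time cells, and $R_{\mathrm{temp}} = \mathcal{R}((\theta_1 - \theta_0)e_2)$ handles the extra buoyancy term; here $\mathcal R$ is the anti-divergence operator from Section 3. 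Each piece is estimated in $L^1_x$ (using the improved $L^p$ scaling of intermittent plane waves for small $p>1$, interpolated down to $L^1$) and, after choosing $\lambda$ large enough depending on $\delta, e, \alpha, v_0, \theta_0$, driven below $\varepsilon_0\delta/20000$; the fractional dissipation term is controlled exactly because $\alpha < 1$, which is where the restriction enters. The velocity difference bound $\|v_1 - v_0\|_{L^2} \le M\sqrt\delta$ comes from $\|w_o\|_{L^2} \sim \sqrt{\fint \rho} \sim \sqrt\delta$ plus the smallness of the correctors, and the new energy gap \eqref{e:energy estimate new} follows from $\int|v_1|^2 = \int|v_0|^2 + \int|w_o|^2 + (\text{small})$ by the choice of $\rho$, with the factor-two improvement from $[3\delta/4,5\delta/4]$ to $[3\delta/8,5\delta/8]$ absorbed into the ``small'' once $\lambda$ is large.

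The main obstacle, as usual in this circle of ideas, is the oscillation error: one must choose the building blocks so that $w_o\otimes w_o$ has a low-frequency part exactly equal to $\rho\,\mathrm{Id} - \mathring R_0$ (the Geometric Lemma), while the high-frequency remainder, after applying $\mathcal R$, is small in $L^1$ — and simultaneously the temporal corrector $w_t$ must be small in $L^2$ yet large enough to kill $\partial_t(a_k^2)$-type terms. Balancing these competing requirements forces the intermittent (anisotropic, concentrated) plane waves of Section 4 rather than classical Beltrami flows, and the parameter inequalities that make all errors close simultaneously are the delicate bookkeeping core of the argument; the buoyancy term $R_{\mathrm{temp}}$, by contrast, is comparatively benign since $\theta_1 - \theta_0$ is already $O(\sqrt\delta)$ in $L^2$ and smooth at the current stage, so it contributes no new difficulty beyond one more term to list. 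I would organize the proof by first fixing all parameters as explicit powers of a single large $\lambda$, then verifying each estimate in the order $w$-construction, $\theta_1$-construction, $\mathring R_1$-construction, and finally checking \eqref{e: estimate for new constructing function}–\eqref{e:energy estimate new}, deferring the heaviest computations to Sections 5–8 as the outline promises.
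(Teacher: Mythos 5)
Your overall blueprint matches the paper's: intermittent plane waves for $w=w_o+w_c+w_t$, $\theta_1$ solving the forward transport--diffusion problem with velocity $v_1$ and data $\theta^0$ (so the temperature carries no Reynolds error, the mean-zero and $L^\infty$ bounds follow from the maximum principle, and the $L^2$--difference bound comes from testing the equation for $\theta_1-\theta_0$ against itself), and a four-part Reynolds stress $R_{\rm lin}+R_{\rm cor}+R_{\rm osc}+R_{\rm temp}$ estimated in $L^p$ for $p$ near $1$. However, there is a genuine conceptual error in how you propose to invoke the Geometric Lemma, which in turn hides a real gap. The paper does \emph{not} use a time cutoff or freeze $\mathring R_0$ on time cells; the coefficient functions $\gamma_\xi$ from Lemma \ref{l:geometric lemma} are smooth on $B_{\varepsilon_0}(\mathrm{Id})$, so the amplitudes $a_{(\xi,j)}(t,x)=\sqrt{\rho_j}\,\chi_j(t,x)\,\gamma_\xi(\mathrm{Id}-\mathring R_0(t,x)/\rho_j)$ can and do depend smoothly on $(t,x)$ with no freezing whatsoever. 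Correspondingly, the temporal corrector $w_1^t$ is \emph{not} there to cancel a transport error from freezing; it cancels (up to a pressure and a $\mu_1^{-1}$-small remainder) the leading part of the oscillation error $\mathrm{div}\big(\sum a^2_{(\xi,j)}(\eta^2_{(\xi)}-1)\xi\otimes\xi\big)$, using the transport identity $\tfrac1\mu\partial_t\eta_{(\xi)}=\xi\cdot\nabla\eta_{(\xi)}$ built into the intermittent Dirichlet kernel --- exactly as in \cite{BV}, where there is likewise no time freezing.

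The real gap your proposal does not address is how to pass from $L^1$-smallness of $\mathring R_0$ to the pointwise condition $\mathrm{Id}-\mathring R_0/\rho\in B_{\varepsilon_0}(\mathrm{Id})$ required by the Geometric Lemma, while still keeping $\|w_o\|_{L^2}\lesssim\sqrt\delta$. You write that the amplitudes come from ``a scalar $\rho(t,x)$,'' but a pointwise $\rho\sim|\mathring R_0|$ would be singular where $\mathring R_0$ vanishes, and a global $\rho\sim\|\mathring R_0\|_{L^\infty}$ would make the $L^2$ norm of $w_o$ depend on $\|\mathring R_0\|_{L^\infty}$ rather than $\|\mathring R_0\|_{L^1}$, destroying the iteration. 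The paper's mechanism is the dyadic-in-amplitude partition of unity $\chi_j$ from (\ref{d:definition of cutoff function}) together with $\rho_j=4^j\delta$ and a separate $\rho_0(t)$ chosen to pump the energy gap: on $\mathrm{supp}\,\chi_j$ one has $|\mathring R_0|/\rho_j\le\varepsilon_0$ by construction, while the key computation $\sum_{j\ge1}\rho_j\int\chi_j^2\lesssim\varepsilon_0^{-1}\|\mathring R_0\|_{L^1}$ is what converts $L^1$ control of the stress into the $O(\delta)$ bound on $\int|w_o|^2$. Your sketch would need to be repaired by replacing the proposed time-freezing with this amplitude decomposition; with that correction the rest of the outline (including the comparatively benign temperature term and the choice $1<p<\tfrac{2\alpha}{3\alpha-1}$ forced by the $(-\Delta)^\alpha$ term) tracks the paper.
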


We will prove the above proposition in the next several sections. Here we first give a proof of Theorem \ref{t:main theorem}.
\begin{proof}
We first fix $\delta=1$ and set
\begin{align}
v_{0}:=0, \quad \theta_0:=0, \quad p_{0}:=0,\quad \mathring{R}_{0}:=0
.\nonumber
\end{align}
Obviously, they solve Boussinesq-Reynolds system (\ref{e:Boussinesq-Reynold equation}) and
\begin{align}
&\frac{3\delta}{4}e(t)\leq e(t)-\int_{{\rm T}^2}|v_{0}|^{2}(x,t)dx\leq\frac{5\delta}{4}e(t),\qquad \forall t \in [0,1]\nonumber\\
&    \sup_{t\in [0,1]}\|\mathring{R}_{0}(t,\cdot)\|_{L^1_x}=0\leq \frac{\varepsilon_0\delta}{10000}.\nonumber
\end{align}
Then choosing $\theta^0\in C^\infty({\rm T}^2)$ satisfies $\fint_{{\rm T}^2}\theta^0(x)dx=0$, and using Proposition \ref{p:iterative proposition} iteratively, we can construct a sequence $(v_{n},~p_{n},~\theta_{n},~\mathring{R_{n}})$,
which solve (\ref{e:Boussinesq-Reynold equation}) and satisfy, for every $t \in [0,1]$
\begin{align}
    \frac{3}{4}\frac{e(t)}{2^{n}}\leq e(t)-\int_{{\rm T}^2}|v_{n}|^{2}(x,t)dx\leq& \frac{5}{4}\frac{e(t)}{2^{n}},\label{e:energy_final}\\
     \|\mathring{R_{n}}(t,\cdot)\|_{L^1_x}\leq&\frac{\varepsilon_0}{2^n\times 10000} ,\label{e:stress_final}\\
    \|v_{n+1}(t,\cdot)-v_{n}(t,\cdot)\|_{L^2} \leq& M\sqrt{\frac{1}{2^{n}}},\label{e:velocity_final}\\
     \|\theta_n(t,\cdot)\|_{L^\infty}\leq& \tilde{M},\quad \theta_n(0,x)=\theta^0(x), \\
\|\theta_{n+1}(t,\cdot)-\theta_n(t,\cdot)\|^2_{L^2}+&\int_0^t \|\nabla(\theta_{n+1}(s,\cdot)-\theta_n(s,\cdot))\|^2_{L^2}\leq \frac{\tilde{M}}{2^n},\\
\|\theta_{n+1}(t,\cdot)\|^2_{L^2}+&2\int_0^t \|\nabla\theta_{n+1}(s,\cdot)\|^2_{L^2}=\|\theta_{n+1}(0,\cdot)\|^2_{L^2}.\label{e:temperature_final}
\end{align}
From (\ref{e:stress_final})-(\ref{e:temperature_final}), we know that $(v_{n},~\theta_{n},~\mathring{R}_{n})$ are, respectively,  Cauchy sequence in $C([0,1], L^2({\rm T}^2))$, $\bigcap_{2\leq p< \infty}C([0,1], L^p({\rm T}^2))\cap L^2([0,1], H^1({\rm T}^2))$ and $C([0,1], L^1({\rm T}^2))$, therefore there exist
\begin{align}
    v\in C([0,1], L^2({\rm T}^2)), \quad \theta \in \bigcap_{2\leq p< \infty}C([0,1], L^p({\rm T}^2))\cap L^2([0,1], H^1({\rm T}^2))\nonumber
\end{align}
such that
\begin{align}
   & v_{n}\rightarrow v \quad {\rm in} \quad C([0,1], L^2({\rm T}^2)),\nonumber\\[5pt]
    & \theta_{n}\rightarrow \theta \quad {\rm in} \quad \bigcap_{2\leq p< \infty}C([0,1], L^p({\rm T}^2))\cap L^2([0,1], H^1({\rm T}^2)), \nonumber\\
    & \mathring{R_{n}}\rightarrow 0 \quad {\rm in} \quad C([0,1], L^1({\rm T}^2))\nonumber
\end{align}
as $n \rightarrow \infty$.\\
Passing into the limit in (\ref{e:Boussinesq-Reynold equation}), we conclude that $(v,~\theta)$ solve (\ref{e:boussinesq equation with diffusive temperature}) in the sense of distribution.
Moreover,~by \eqref{e:energy_final},
$$ e(t)=\int_{{\rm T}^2}|v|^{2}(x,t)dx, \qquad \forall t\in[0,1].$$
Moreover, by (\ref{e:temperature_final}), we deduce that the temperature $\theta$ satisfies the energy equality: for every $t\in [0, 1]$
\beno
\|\theta(t,\cdot)\|^2_{L^2}+2\int_0^t \|\nabla\theta(s,\cdot)\|^2_{L^2}=\|\theta(0,\cdot)\|^2_{L^2}.
\eeno
This complete the proof of Theorem \ref{t:main theorem}.
\end{proof}

The rest of this paper will be dedicated to prove Proposition \ref{p:iterative proposition}.
First, we add perturbations to $v_0$  and get new functions $v_{1}$ as following:
 \begin{align}
 v_{1}=&v_0+w^p_{1}+w^c_{1}+w^t_{1}:=v_0+w_{1},\nonumber
 \end{align}
where $w^p_{1}, w^c_{1}, w^t_{1}$ are smooth functions given by explicit formulas.  We introduce some parameters $\lambda_1, \mu_1, r_1, \sigma_1$  satisfying the relation (\ref{r:relationship of papameter}).
After the construction of new velocity $v_{1}$, we construction new temperature $\theta_{1}$ by solving the following transport-diffusion equation: there exists $\theta_{1}\in C^\infty([0, 1]\times {\rm T}^2, R)$ which solves
\begin{equation}\label{e:transport-diffusion equation}
\begin{cases}
\partial_t \theta_{1}+ v_{1}\cdot\nabla  \theta_{1} - \triangle  \theta_{1}=0,\\[3pt]
\theta_{1}|_{t=0}=\theta^0,
\end{cases}
\end{equation}
where $\theta^0$ is the function appeared in Proposition \ref{p:iterative proposition}.
After construction of $v_{1}, \theta_{1}$, we mainly focus on finding functions $\mathring{R}_{1}, p_{1}$ with the desired estimates and solving system (\ref{e:Boussinesq-Reynold equation}).

%

\section{Technical tool}

In this section, we collect some technical tools which will be frequently used in the following.

\subsection{Properties of fast oscillatory}

In this subsection, we discuss some properties of fast oscillatory and the proof can be found in \cite{MS1}, \cite{MS2}, which was inspired by \cite{BV}. More precisely, we give an improved H\"{o}lder inequality which concern the $L^p$ norm of the product of a slow oscillating function with a fast oscillating function,   and a mean value estimate which concern the mean value of the product of a slow oscillating function with a fast oscillating function .

For a given function $f: {\rm T}^2\rightarrow R$, $\lambda\in {\rm N}$,  we set
\beno
f_\lambda(x):=f(\lambda x).
\eeno
\begin{Lemma}\label{e:improved $L^p$ inequality}
Let $f,g: {\rm T}^2\rightarrow R$ be smooth functions, $\lambda \in {\rm N}$. Then for every $p\in [1, +\infty]$, we have
\beno
\|f g_\lambda\|_{L^p}\leq \|f\|_{L^p}\|g\|_{L^p}+\frac{C_p}{\lambda^{\frac{1}{p}}}\|f\|_{C^1}\|g\|_{L^p}.
\eeno
\end{Lemma}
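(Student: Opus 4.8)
The plan is to prove the improved Hölder inequality (Lemma \ref{e:improved $L^p$ inequality}) by comparing the exact quantity $\|f g_\lambda\|_{L^p}^p = \int_{{\rm T}^2} |f(x)|^p |g(\lambda x)|^p \, dx$ to the product $\|f\|_{L^p}^p \|g\|_{L^p}^p = \left(\int_{{\rm T}^2}|f|^p\right)\left(\fint_{{\rm T}^2}|g(\lambda x)|^p\, dx\right)$, which would hold as an exact identity if $|f|^p$ were constant. The discrepancy is therefore controlled by how much $|f|^p$ oscillates, and the gain of $\lambda^{-1/p}$ comes from an equidistribution/averaging argument on the fast scale. I would first reduce to $p \in [1,\infty)$, the case $p=\infty$ being the trivial bound $\|fg_\lambda\|_{L^\infty}\le \|f\|_{L^\infty}\|g\|_{L^\infty}$ (and even simpler for the claimed inequality).

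The key technical step is a one-dimensional-type lemma: for a smooth $h: {\rm T}^2 \to R$ and $\lambda \in {\rm N}$,
\beno
\left| \int_{{\rm T}^2} h(x)\, G(\lambda x)\, dx - \left(\int_{{\rm T}^2} h \right)\left(\fint_{{\rm T}^2} G \right)\right| \leq \frac{C}{\lambda}\|h\|_{C^1}\|G\|_{L^1},
\eeno
valid for $G$ periodic and integrable. I would prove this by partitioning ${\rm T}^2$ into $\lambda^2$ cubes $Q_j$ of sidelength $1/\lambda$; on each $Q_j$ the function $G(\lambda \cdot)$ is a rescaled copy of one full period of $G$, so $\int_{Q_j} h(x) G(\lambda x)\, dx = h(x_j)\int_{Q_j} G(\lambda x)\, dx + O(\lambda^{-1}\|\nabla h\|_{L^\infty(Q_j)} \int_{Q_j}|G(\lambda x)|\,dx)$ for a sample point $x_j \in Q_j$, using that $\mathrm{diam}(Q_j) \le \sqrt 2/\lambda$. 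Here $\int_{Q_j} G(\lambda x)\, dx = \lambda^{-2}\int_{{\rm T}^2} G = |Q_j|\fint G$, so summing over $j$ turns the main terms into a Riemann sum $\sum_j h(x_j)|Q_j| \fint G$ that approximates $(\int h)(\fint G)$ up to a further $O(\lambda^{-1}\|\nabla h\|_{L^\infty})$ error, while the error terms sum to $C\lambda^{-1}\|h\|_{C^1}\|G\|_{L^1}$.

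Applying this with $h = |f|^p$ and $G = |g|^p$ (note $|f|^p$ need not be $C^1$ where $f$ vanishes, so I would either work with a regularized quantity $(|f|^2 + \epsilon)^{p/2}$ and let $\epsilon \to 0$, or invoke that $\||f|^p\|_{C^1} \lesssim_p \|f\|_{C^1}\|f\|_{C^0}^{p-1} \le \|f\|_{C^1}^p$ by the chain rule away from the zero set and a density argument), I obtain
\beno
\|f g_\lambda\|_{L^p}^p \leq \|f\|_{L^p}^p \|g\|_{L^p}^p + \frac{C_p}{\lambda}\|f\|_{C^1}^p \|g\|_{L^p}^p.
\eeno
Taking $p$-th roots and using the elementary inequality $(a+b)^{1/p} \le a^{1/p} + b^{1/p}$ for $a, b \ge 0$, $p \ge 1$, gives
\beno
\|f g_\lambda\|_{L^p} \leq \|f\|_{L^p}\|g\|_{L^p} + \frac{C_p}{\lambda^{1/p}}\|f\|_{C^1}\|g\|_{L^p},
\eeno
which is the claim. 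The main obstacle I anticipate is the low regularity of $|f|^p$ near the zeros of $f$ when $p$ is not an even integer; handling this cleanly via the regularization $(|f|^2+\epsilon)^{p/2}$ and a limiting argument (together with keeping track of the constant's dependence on $p$ only) is the one place care is needed, whereas the cube-partition/Riemann-sum estimate is routine. Since this lemma is attributed to \cite{MS1, MS2}, I would expect the authors simply to cite it, but the above is how the proof goes.
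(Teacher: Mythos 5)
The paper does not supply its own proof of this lemma; it simply cites \cite{MS1,MS2} (Lemma 2.1 there) and notes the inspiration from \cite{BV}. Your argument is precisely the one used in those references: reduce the statement to an oscillation estimate for $\int h\, G(\lambda\cdot)$ applied to $h=|f|^p$, $G=|g|^p$, prove that estimate by a cube decomposition into $\lambda^2$ periods plus a Riemann-sum argument, and finish by taking $p$-th roots with $(a+b)^{1/p}\le a^{1/p}+b^{1/p}$. Note that your intermediate oscillation lemma is (up to subtracting the mean of $G$) the paper's Lemma~\ref{e:mean value estimate}, so the whole proof is really ``Lemma~\ref{e:improved $L^p$ inequality} follows from Lemma~\ref{e:mean value estimate} applied to $|f|^p$, $|g|^p$.'' The regularity worry about $|f|^p$ is actually a non-issue: $s\mapsto|s|^p$ is $C^1$ for every $p>1$ (its derivative $p|s|^{p-1}\operatorname{sgn}(s)$ is continuous and vanishes at $0$) and Lipschitz for $p=1$, so $|f|^p$ is already $C^1$ (respectively Lipschitz) with $\||f|^p\|_{C^1}\le C_p\|f\|_{C^1}^p$, and no $\varepsilon$-regularization is needed. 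The proof is correct and matches the cited source.
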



\begin{Lemma}\label{e:mean value estimate}
Let $f, g: {\rm T}^2\rightarrow R$ be smooth function with $\fint_{{\rm T}^2}g(x)dx=0$, and $\lambda\in {\rm N}.$ Then there hold
\beno
\Big|\fint_{{\rm T}^2}fg_\lambda dx\Big|\leq \frac{\sqrt{2} \|f\|_{C^1}\|g\|_{L^1}}{\lambda}.
\eeno
\end{Lemma}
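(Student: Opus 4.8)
The plan is to exploit the integrality of $\lambda$: since $\lambda\in\mathbb{N}$, the rescaled function $g_\lambda$ has vanishing average not merely over all of ${\rm T}^2$ but already over every square of side $1/\lambda$. This is what makes an $L^1$ bound (rather than an $L^2$ one) possible. Note that the naive route — writing $g=\mathrm{div}\,V$ with $V=\nabla\Delta^{-1}g$ and integrating by parts against $f$ — would force us to control $\|V\|_{L^1}$ by $\|g\|_{L^1}$, which fails (Calderón–Zygmund operators are not bounded on $L^1$); the cellwise argument avoids this entirely.

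Concretely, identify ${\rm T}^2$ with $[0,1)^2$ and partition it into the $\lambda^2$ congruent squares $Q_i:=\tfrac1\lambda\big(i+[0,1)^2\big)$, $i\in\{0,\dots,\lambda-1\}^2$. For $x\in Q_i$ write $x=\tfrac1\lambda(i+z)$ with $z\in[0,1)^2$; since $g$ is $\mathbb{Z}^2$-periodic, $g_\lambda(x)=g(\lambda x)=g(i+z)=g(z)$, so a change of variables gives $\int_{Q_i}g_\lambda\,dx=\tfrac1{\lambda^2}\fint_{{\rm T}^2}g\,dz=0$ by hypothesis, and likewise $\sum_i\int_{Q_i}|g_\lambda|\,dx=\|g\|_{L^1}$. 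Subtracting the local average $\bar f_i:=\fint_{Q_i}f$ on each cell and using $\fint_{{\rm T}^2}=\int_{{\rm T}^2}$ (unit measure), I would estimate
\[
\Big|\fint_{{\rm T}^2}fg_\lambda\,dx\Big|
=\Big|\sum_i\int_{Q_i}(f-\bar f_i)\,g_\lambda\,dx\Big|
\le\sum_i\Big(\sup_{Q_i}|f-\bar f_i|\Big)\int_{Q_i}|g_\lambda|\,dx .
\]
Since $\mathrm{diam}(Q_i)=\sqrt2/\lambda$, the mean value theorem yields $\sup_{Q_i}|f-\bar f_i|\le\tfrac{\sqrt2}{\lambda}\|\nabla f\|_{L^\infty}\le\tfrac{\sqrt2}{\lambda}\|f\|_{C^1}$, and combining with $\sum_i\int_{Q_i}|g_\lambda|\,dx=\|g\|_{L^1}$ gives exactly the claimed inequality.

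I do not expect a genuine obstacle: the entire content is the observation that integrality of $\lambda$ upgrades "mean zero on ${\rm T}^2$" to "mean zero on each $1/\lambda$-cell", after which it is a one-line Poincaré-type estimate. The only points requiring a little care are the two change-of-variables identities (which use that the $Q_i$ tile ${\rm T}^2$ exactly and that ${\rm T}^2$ has unit measure) and correctly tracking the factor $\sqrt2$, which is precisely the diameter of a $1/\lambda$-square in dimension two — hence the constant $\sqrt2$ in the statement.
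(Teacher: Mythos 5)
Your proof is correct and is essentially the one in Modena--Sz\'ekelyhidi (cited by the paper as the source of this lemma): partition ${\rm T}^2$ into $\lambda^2$ cells of side $1/\lambda$, observe that integrality of $\lambda$ forces $g_\lambda$ to have zero mean on each cell, subtract the cell average of $f$, and use the cell diameter $\sqrt2/\lambda$ together with $\|\nabla f\|_{L^\infty}\le\|f\|_{C^1}$. The two change-of-variables identities and the diameter bound are exactly the content of that argument, and the constant $\sqrt2$ is the dimension-$d$ factor $\sqrt d$ specialized to $d=2$, so nothing is missing.
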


\subsection{Commutator for fast oscillation}

\begin{Lemma}\label{e:oscillatory lemma}
	Fix $\kappa\geq 1$. Let $a\in C^2(\mathbb{T}^3)$.
	For $1 < p < \infty$, and any $f \in L^p(\mathbb{T}^3)$, we have
	\begin{align*}
	\| |\nabla|^{-1}\mathbb{P}_{\neq 0}(a \mathbb{P}_{\geq k} f)\|_{L^p} &\lesssim k^{-1}  (\|a\|_{L^{\infty}}  + \|\nabla^2 a\|_{L^{\infty}}) \|f\|_{L^p}. 
	\end{align*}
\end{Lemma}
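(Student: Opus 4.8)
The plan is to extract the gain $k^{-1}$ from the high-frequency localization of $\mathbb{P}_{\geq k}f$ by integrating by parts twice, moving both derivatives off $f$ and onto $a$; once this is done, every operator that remains is a bounded Calder\'{o}n--Zygmund operator on $L^p(\mathbb{T}^3)$, $1<p<\infty$. Concretely, set $g:=\mathbb{P}_{\geq k}f$ --- which is mean-free, since the high-pass filter $\mathbb{P}_{\geq k}$ with $k\geq1$ annihilates the zero mode --- and $\psi:=(-\Delta)^{-1}g=|\nabla|^{-2}g$. The Fourier multipliers $|\xi|^{-2}\mathbf{1}_{|\xi|\geq k}$ and $\xi|\xi|^{-2}\mathbf{1}_{|\xi|\geq k}$ equal, respectively, $k^{-2}$ and $k^{-1}$ times multipliers that, after smoothing the cutoff and rescaling $\xi\mapsto\xi/k$, satisfy the Mikhlin--H\"{o}rmander bounds uniformly in $k$; hence by the periodic multiplier theorem (transference from $\mathbb{R}^3$)
\[
\|\psi\|_{L^p}\lesssim k^{-2}\|f\|_{L^p},\qquad \|\nabla\psi\|_{L^p}\lesssim k^{-1}\|f\|_{L^p},\qquad \|\nabla^2\psi\|_{L^p}\lesssim\|f\|_{L^p}.
\]

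Next I would expand $ag$ using $g=-\Delta\psi$ and the Leibniz rule applied twice:
\[
ag=-a\,\Delta\psi={\rm div}\!\left(-a\,\nabla\psi+\psi\,\nabla a\right)-(\Delta a)\,\psi .
\]
Apply $|\nabla|^{-1}\mathbb{P}_{\neq0}$ to both sides. The operator $|\nabla|^{-1}\mathbb{P}_{\neq0}\,{\rm div}$ is, componentwise, a Riesz transform and so is bounded on $L^p$, $1<p<\infty$; and $|\nabla|^{-1}\mathbb{P}_{\neq0}$ is itself bounded on $L^p$, since on $\mathbb{T}^3$ the nonzero frequencies are bounded away from the origin and therefore its symbol $|\xi|^{-1}\mathbf{1}_{\xi\neq0}$ is (after the usual smooth extension) a Mikhlin multiplier. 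Consequently, using the estimates above together with $k\geq1$ (so $k^{-2}\leq k^{-1}$),
\begin{align*}
\big\||\nabla|^{-1}\mathbb{P}_{\neq0}(a\,\mathbb{P}_{\geq k}f)\big\|_{L^p}
&\lesssim \|a\|_{L^\infty}\|\nabla\psi\|_{L^p}+\big(\|\nabla a\|_{L^\infty}+\|\nabla^2a\|_{L^\infty}\big)\|\psi\|_{L^p}\\
&\lesssim k^{-1}\big(\|a\|_{L^\infty}+\|\nabla a\|_{L^\infty}+\|\nabla^2a\|_{L^\infty}\big)\|f\|_{L^p}.
\end{align*}

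To obtain the form stated in the lemma I would finally absorb the first-order term by the Landau--Kolmogorov interpolation on the torus, $\|\nabla a\|_{L^\infty}^2\lesssim\|a\|_{L^\infty}\big(\|a\|_{L^\infty}+\|\nabla^2a\|_{L^\infty}\big)$, whence $\|\nabla a\|_{L^\infty}\lesssim\|a\|_{L^\infty}+\|\nabla^2a\|_{L^\infty}$ by Young's inequality. The only step that needs genuine care is the first one: one must verify that the $k^{-1}$ really emerges, i.e. that $\nabla|\nabla|^{-2}\mathbb{P}_{\geq k}$ has $L^p\to L^p$ norm $O(k^{-1})$ uniformly in $k$, which is precisely where the Mikhlin/transference input enters; if one wishes to avoid the non-smoothness of the sharp cutoff $\mathbf{1}_{|\xi|\geq k}$, one may replace $\mathbb{P}_{\geq k}$ throughout by a smooth Littlewood--Paley projection onto frequencies $\gtrsim k$ (which is how the lemma is applied later in any case). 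Everything else is bookkeeping with the boundedness of singular integrals on $L^p(\mathbb{T}^3)$.
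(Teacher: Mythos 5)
Your proof is correct, but it takes a genuinely different route from the one the paper points to. The paper does not prove the lemma itself --- it defers to Buckmaster--Vicol \cite{BV}, whose argument decomposes the amplitude $a$ into Littlewood--Paley pieces $a=\mathbb{P}_{\leq k/2}a+\mathbb{P}_{>k/2}a$: the low-frequency piece is handled by noting that $(\mathbb{P}_{\leq k/2}a)(\mathbb{P}_{\geq k}f)$ has Fourier support in $\{|\xi|\gtrsim k\}$, so that $|\nabla|^{-1}$ contributes a factor $k^{-1}$ directly, giving $k^{-1}\|a\|_{L^\infty}\|f\|_{L^p}$; the high-frequency piece is handled by Bernstein, $\|\mathbb{P}_{>k/2}a\|_{L^\infty}\lesssim k^{-2}\|\nabla^2 a\|_{L^\infty}$, together with the $L^p$-boundedness of $|\nabla|^{-1}\mathbb{P}_{\neq 0}$ on $\mathbb{T}^3$, giving $k^{-2}\|\nabla^2 a\|_{L^\infty}\|f\|_{L^p}$. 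You instead never touch $a$'s frequency content: you introduce the potential $\psi=(-\Delta)^{-1}\mathbb{P}_{\geq k}f$, rewrite $a\,\mathbb{P}_{\geq k}f=\mathrm{div}(-a\nabla\psi+\psi\nabla a)-(\Delta a)\psi$, and let the Riesz-transform structure of $|\nabla|^{-1}\mathbb{P}_{\neq 0}\,\mathrm{div}$ absorb the divergence; the $k^{-1}$ and $k^{-2}$ gains come from the multiplier bounds $\|\nabla\psi\|_{L^p}\lesssim k^{-1}\|f\|_{L^p}$, $\|\psi\|_{L^p}\lesssim k^{-2}\|f\|_{L^p}$. Both routes rely on the same core facts (periodic Mikhlin multipliers plus $k\geq 1$), and both need the projection $\mathbb{P}_{\geq k}$ to be understood as a smooth Littlewood--Paley cutoff rather than a sharp ball multiplier --- you flag this correctly, and it is indeed how the lemma is used in the paper. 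The one extra step your route incurs is the Landau--Kolmogorov absorption of the intermediate term $\|\nabla a\|_{L^\infty}$, which the BV decomposition avoids by never producing a first-order derivative of $a$; this is harmless, and your argument should be run on trigonometric polynomials $f$ first and closed by density so the Leibniz identity is classical.
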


The proof of this Lemma can be found in \cite{BV}.

\section{Intermittent plane waves}

In this section, we describe in detail the construction of the \textit{intermittent plane waves} which will form the building blocks of the convex integration scheme.

We first recall the following stationary solution for the 2d Euler equation. Our building block in this paper is the inhomogeneous version of it.
\subsection{Stationary flows in 2D and Geometric Lemma}
\begin{Proposition}\label{Prop:Beltrami-2D}
Let $\Lambda$ be a given finite symmetric subset of $S^1\cap Q^2$ with $\lambda\Lambda \in Z^2.$ Then for any choice of coefficients $a_{\bar{\xi}}\in {\rm C}$ with $\overline{a_{\bar{\xi}}}=a_{-\bar{\xi}}$, the vector field
\beno
W(x)=\sum_{\bar{\xi}\in \Lambda}a_{\bar{\xi}}i\bar{\xi}^{\bot}e^{i \bar{\xi}\cdot x}, \quad \Psi(x)=\sum_{\bar{\xi}\in \Lambda}a_{\bar{\xi}}e^{i \bar{\xi}\cdot x}
\eeno
is real-valued and satisfies
\beno
{\rm div}(W\otimes W)=\nabla\Big(\frac{|W|^2}{2}+\frac{\Psi^2}{2}\Big), \quad W(x)=\nabla^{\bot}\Psi(x).
\eeno
Here and throughout the paper, we denote $\bar{\xi}^{\perp}=(-\bar{\xi}_2, \bar{\xi}_1)$ if $\bar{\xi}=( \bar{\xi}_1, \bar{\xi}_2)$, and denote $\nabla^\bot=(-\partial_2, \partial_1)$.
Furthermore,
\beno
\big<W\otimes W\big>:=\fint_{{\rm T}^2}W\otimes W(x)dx=\sum_{\bar{\xi}\in \Lambda}|a_{\bar{\xi}}|^2({\rm Id}-\bar{\xi}\otimes \bar{\xi}).
\eeno
\end{Proposition}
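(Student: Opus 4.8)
The statement to prove, Proposition \ref{Prop:Beltrami-2D}, is a classical fact about stationary solutions of the 2D Euler equations built from plane waves along rational directions on the circle. Here is how I would approach it.

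\textbf{Reality of $W$ and $\Psi$.} First I would verify that $W$ and $\Psi$ are real-valued. Since $\Lambda$ is symmetric, i.e. $\bar\xi\in\Lambda \iff -\bar\xi\in\Lambda$, I can pair up the term indexed by $\bar\xi$ with the term indexed by $-\bar\xi$. Using the constraint $\overline{a_{\bar\xi}} = a_{-\bar\xi}$, the complex conjugate of the $\bar\xi$-term equals the $(-\bar\xi)$-term (note $(-\bar\xi)^\perp = -\bar\xi^\perp$, so the extra sign cancels against the one from conjugating $i$), hence $\overline{W(x)} = W(x)$ and likewise $\overline{\Psi(x)} = \Psi(x)$. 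The hypothesis $\lambda\Lambda\subset\mathbb{Z}^2$ is what makes each $e^{i\bar\xi\cdot x}$ well-defined on the torus after the relevant rescaling; at this stage I only need smoothness, which is automatic for a finite trigonometric sum.

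\textbf{The identity $W = \nabla^\perp\Psi$.} This is a direct computation: $\nabla^\perp e^{i\bar\xi\cdot x} = (-\partial_2,\partial_1) e^{i\bar\xi\cdot x} = i(-\bar\xi_2,\bar\xi_1)e^{i\bar\xi\cdot x} = i\bar\xi^\perp e^{i\bar\xi\cdot x}$, so applying $\nabla^\perp$ termwise to $\Psi$ reproduces exactly the formula for $W$. In particular $\mathrm{div}\, W = \mathrm{div}\,\nabla^\perp\Psi = 0$, so $W$ is divergence-free.

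\textbf{The quadratic identity for $\mathrm{div}(W\otimes W)$.} Writing $W = \nabla^\perp\Psi$ and using $\mathrm{div}\,W = 0$, I would compute $\mathrm{div}(W\otimes W) = (W\cdot\nabla)W = (\nabla^\perp\Psi\cdot\nabla)\nabla^\perp\Psi$. Componentwise, or more slickly via the vector identity $(W\cdot\nabla)W = \nabla\tfrac{|W|^2}{2} - W\times(\nabla\times W)$ in its 2D form, the ``curl'' of $W = \nabla^\perp\Psi$ is $-\Delta\Psi$. But each $\bar\xi\in\Lambda\subset S^1$ is a unit vector, so $-\Delta e^{i\bar\xi\cdot x} = |\bar\xi|^2 e^{i\bar\xi\cdot x} = e^{i\bar\xi\cdot x}$, hence $-\Delta\Psi = \Psi$ (this is precisely the Beltrami/eigenfunction property, and the only place the normalization $\Lambda\subset S^1$ enters). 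Therefore the rotational part becomes $W^\perp\,\Psi = \nabla^\perp\Psi\,\Psi = \nabla\tfrac{\Psi^2}{2}$ up to the correct sign bookkeeping, and collecting terms gives $\mathrm{div}(W\otimes W) = \nabla\!\left(\tfrac{|W|^2}{2} + \tfrac{\Psi^2}{2}\right)$. I would double-check the signs in the 2D cross-product convention ($a\times b = a_1 b_2 - a_2 b_1$, and $b^\perp(a\times b)$ type terms) since that is the only genuinely error-prone bit.

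\textbf{The average $\langle W\otimes W\rangle$.} Finally, $W\otimes W = \sum_{\bar\xi,\bar\eta} a_{\bar\xi}a_{\bar\eta}\,(i\bar\xi^\perp)\otimes(i\bar\eta^\perp)\,e^{i(\bar\xi+\bar\eta)\cdot x}$, and integrating over $\mathbb{T}^2$ kills every term with $\bar\xi+\bar\eta\neq 0$, leaving $\bar\eta = -\bar\xi$. Then $a_{\bar\xi}a_{-\bar\xi} = a_{\bar\xi}\overline{a_{\bar\xi}} = |a_{\bar\xi}|^2$ and $(i\bar\xi^\perp)\otimes(i(-\bar\xi)^\perp) = \bar\xi^\perp\otimes\bar\xi^\perp$. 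Since $\{\bar\xi,\bar\xi^\perp\}$ is an orthonormal basis of $\mathbb{R}^2$, we have $\bar\xi^\perp\otimes\bar\xi^\perp = \mathrm{Id} - \bar\xi\otimes\bar\xi$, which yields $\langle W\otimes W\rangle = \sum_{\bar\xi\in\Lambda}|a_{\bar\xi}|^2(\mathrm{Id} - \bar\xi\otimes\bar\xi)$ as claimed.

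\textbf{Main obstacle.} There is no serious analytic difficulty here — everything is a finite trigonometric sum and the computations are elementary. The only thing to be careful about is the sign/normalization bookkeeping in the vector calculus identity for $(W\cdot\nabla)W$ in two dimensions and the fact that the identity $-\Delta\Psi=\Psi$ relies crucially on $\Lambda$ consisting of \emph{unit} vectors; I would present that step with the componentwise computation to be safe rather than invoking a 3D identity restricted to the plane.
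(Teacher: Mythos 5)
The paper does not include a proof of this proposition; it simply cites \cite{CHO} and omits the argument. Your proposal supplies the standard proof and its overall structure is correct: reality follows from the symmetry of $\Lambda$ and the Hermitian constraint on $a_{\bar\xi}$; $W=\nabla^\perp\Psi$ is a one-line computation; the quadratic identity follows from the 2D Bernoulli identity $(W\cdot\nabla)W = \nabla\tfrac{|W|^2}{2} + (\mathrm{curl}\,W)\,W^\perp$ combined with the eigenfunction property $\Delta\Psi = -\Psi$ (which is where $\Lambda\subset S^1$ is used); and the mean of $W\otimes W$ follows from orthogonality of Fourier modes and $\bar\xi^\perp\otimes\bar\xi^\perp = \mathrm{Id}-\bar\xi\otimes\bar\xi$.

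One point to correct in your middle step: with $\nabla^\perp=(-\partial_2,\partial_1)$, the scalar curl of $W=\nabla^\perp\Psi$ is $\partial_1 W_2-\partial_2 W_1 = +\Delta\Psi$ (not $-\Delta\Psi$), and $W^\perp=(-W_2,W_1)=-\nabla\Psi$ (not $\nabla^\perp\Psi$). Tracing signs: $(\mathrm{curl}\,W)\,W^\perp = (\Delta\Psi)(-\nabla\Psi) = (-\Psi)(-\nabla\Psi) = \Psi\nabla\Psi = \nabla\tfrac{\Psi^2}{2}$, so you still land on the stated identity, but the intermediate expressions you wrote (``curl is $-\Delta\Psi$'', ``$W^\perp\Psi=\nabla^\perp\Psi\cdot\Psi$'') are not literally correct. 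You already flag the sign bookkeeping as the error-prone part and commit to doing it componentwise, which is the right instinct; just make sure the final write-up fixes these before invoking the identity.
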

The proof of this Proposition can be found in \cite{CHO}, and we omit it here.

Let
\begin{align*}
\Lambda_0^+ = \Big\{ e_1, \frac{3}{5}e_1 + \frac{4}{5}e_2, \frac{3}{5}e_1 - \frac{4}{5}e_2  \Big\}, \quad \Lambda_0^- = - \Lambda_0^+, \quad \Lambda_0 = \Lambda_0^+ \cup \Lambda_0^-,
\end{align*}
and $\Lambda_1$ be given by the rotation of $\Lambda_0$ counter clock-wise by $\pi/2$:
\begin{align*}
\Lambda_1^+ = \Big\{ e_2, \frac{3}{5}e_2 + \frac{4}{5}e_1, \frac{3}{5}e_2 - \frac{4}{5}e_1  \Big\}, \quad \Lambda_1^- = - \Lambda_1^+, \quad \Lambda_1 = \Lambda_1^+ \cup \Lambda_1^-.
\end{align*}
Clearly $\Lambda_0, \Lambda_1 \subseteq  Q^2\cap S^1$ and we have the representation
\begin{align*}
\frac{25}{32}\Big(\Big(\frac{3}{5}e_1 + \frac{4}{5}e_2\Big)\otimes \Big(\frac{3}{5}e_1 + \frac{4}{5}e_2\Big)+\Big( \frac{3}{5}e_1 - \frac{4}{5}e_2\Big)\otimes  \Big(\frac{3}{5}e_1 - \frac{4}{5}e_2\Big)\Big) + \frac{7}{16}e_1\otimes e_1 = \mathrm{Id}.
\end{align*}
In fact, such representation holds for $2 \times 2$ symmetric matrices near $\mathrm{Id}$.
\begin{Lemma}[Geometric Lemma]\label{l:geometric lemma}
There exists $\varepsilon_{0}>0$, and smooth positive functions $\gamma_{\xi}$:
 \begin{align*}
    \gamma_{\xi}\in C^{\infty}(B_{\varepsilon_{0}}({\rm Id})) 
\end{align*}
such that for every $2 \times 2$ symmetric matrix $R \in B_{\varepsilon_{0}}({\rm Id})$, we have
  $$R = \sum_{\xi \in \Lambda_0^+} \gamma^2_{\xi}(R)\xi \otimes \xi.$$
\end{Lemma}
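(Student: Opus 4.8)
The plan is to realize the claim as an application of the inverse function theorem in the spirit of the classical geometric lemmas of De Lellis--Székelyhidi. First I would fix the three directions $\xi_1 = e_1$, $\xi_2 = \tfrac{3}{5}e_1 + \tfrac{4}{5}e_2$, $\xi_3 = \tfrac{3}{5}e_1 - \tfrac{4}{5}e_2$ that make up $\Lambda_0^+$, and form the linear map
\[
\Phi : \mathbb{R}^3 \longrightarrow S^{2\times 2}, \qquad \Phi(c_1,c_2,c_3) = \sum_{j=1}^{3} c_j\, \xi_j \otimes \xi_j .
\]
Here $S^{2\times 2}$ is the $3$-dimensional space of symmetric $2\times 2$ matrices. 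The first key step is to check that $\Phi$ is a linear isomorphism, i.e.\ that the three rank-one matrices $\xi_j \otimes \xi_j$ are linearly independent in $S^{2\times 2}$; this is a finite computation with $3\times 3$ determinants using the explicit coordinates of the $\xi_j$. The displayed identity just before the Lemma already exhibits $\mathrm{Id}$ in the image of $\Phi$ with strictly positive coefficients $c_1 = \tfrac{7}{16}$, $c_2 = c_3 = \tfrac{25}{32}$, so $\mathrm{Id} = \Phi(c^\ast)$ with $c^\ast$ in the open positive octant.

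Granting that $\Phi$ is invertible, the second step is immediate: $\Phi^{-1}$ is linear, hence continuous, so there is $\varepsilon_0 > 0$ such that $\Phi^{-1}\big(B_{\varepsilon_0}(\mathrm{Id})\big)$ is contained in the open octant $\{c_1,c_2,c_3 > 0\}$ (because $\Phi^{-1}(\mathrm{Id}) = c^\ast$ lies in that open set). For $R \in B_{\varepsilon_0}(\mathrm{Id})$ define $c(R) = \Phi^{-1}(R)$, which has all positive components, and set
\[
\gamma_{\xi_j}(R) := \sqrt{c_j(R)}, \qquad j = 1,2,3 .
\]
Since $R \mapsto c_j(R)$ is linear (in particular real-analytic) and strictly positive on $B_{\varepsilon_0}(\mathrm{Id})$, the square root is smooth there, so $\gamma_{\xi} \in C^\infty(B_{\varepsilon_0}(\mathrm{Id}))$ and is positive, and by construction $R = \sum_{j} \gamma_{\xi_j}^2(R)\,\xi_j\otimes\xi_j = \sum_{\xi\in\Lambda_0^+} \gamma_\xi^2(R)\,\xi\otimes\xi$, which is exactly the assertion. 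If one wants a more robust argument that does not hinge on $\Phi$ being exactly linear-invertible — for instance if later one needs a version with $\Lambda_0^+$ replaced by a larger set — one can instead invoke the implicit function theorem at $R = \mathrm{Id}$ for the map $(R,c) \mapsto R - \sum_j c_j\,\xi_j\otimes\xi_j$, whose partial derivative in $c$ at $(\mathrm{Id}, c^\ast)$ is precisely $-\Phi$; the same nondegeneracy computation is what is needed.

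The only genuine obstacle is therefore the linear-algebra verification that $\{\xi_j\otimes\xi_j\}_{j=1}^3$ spans $S^{2\times 2}$; everything else is soft. Concretely, writing $\xi_j\otimes\xi_j$ in the basis $\{e_1\otimes e_1,\ e_2\otimes e_2,\ e_1\otimes e_2 + e_2\otimes e_1\}$ gives the column vectors $(1,0,0)$, $(\tfrac{9}{25},\tfrac{16}{25},\tfrac{12}{25})$, $(\tfrac{9}{25},\tfrac{16}{25},-\tfrac{12}{25})$, whose determinant is $\tfrac{1}{25}\cdot(-\tfrac{24}{25}) \ne 0$, so independence holds and the proof is complete. (The positivity of the coefficients at $\mathrm{Id}$, needed so that the square roots are well defined and smooth on a neighborhood, is guaranteed by the explicit representation recorded above the Lemma.)
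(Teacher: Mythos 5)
Your proof is correct and supplies exactly the argument the paper leaves implicit: the paper only displays the positive representation of $\mathrm{Id}$ and then asserts that ``such representation holds for $2\times 2$ symmetric matrices near $\mathrm{Id}$'', which is precisely the linear-invertibility-plus-openness argument you spell out, with the smooth positive $\gamma_\xi$ obtained as square roots of the (linear, hence $C^\infty$) coefficient functions. One small arithmetic slip at the end: the determinant of the matrix with columns $(1,0,0)$, $(\tfrac{9}{25},\tfrac{16}{25},\tfrac{12}{25})$, $(\tfrac{9}{25},\tfrac{16}{25},-\tfrac{12}{25})$ is $-\tfrac{384}{625}$ rather than $\tfrac{1}{25}\cdot(-\tfrac{24}{25})$, but it is nonzero either way, so the linear independence and hence the whole argument stand unaffected.
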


\begin{Remark}\label{r:remark about universal constant}
By rotational symmetry, Geometric Lemma \ref{l:geometric lemma} also holds for $\xi \in \Lambda_1^+$.
It is  convenient to introduce a small geometric constant $c_0\in (0, 1)$ such that
\beno
|\xi+\xi'|\geq 2c_0
\eeno
for all $\xi, \xi' \in \Lambda_0 \cap \Lambda_1, \xi \neq -\xi'$. Moreover, for $\xi\in \Lambda_k^{-}$ with $k=0,1$, we set $\gamma_\xi:=\gamma_{-\xi}$.
\end{Remark}

\begin{Remark}
When ${\rm Id}-\mathring{R} \in B_{\varepsilon_0}({\rm Id})$, by Geometric Lemma, we have
\beno
{\rm Id}-\mathring{R}=\sum_{\xi \in \Lambda_0^+} \gamma_{\xi}^2\big({\rm Id}-\mathring{R}\big)\xi \otimes \xi.
\eeno
Thus, taking trace in both side, we obtain
\ben\label{e:identity in Geomertic constant}
\sum_{\xi \in \Lambda_0^+} \gamma_{\xi}^2\big({\rm Id}-\mathring{R}\big)=2.
\een
\end{Remark}

\subsection{Intermittent plane flow} The Dirichlet kernel $\tilde{D}_r$ is defined as
\beno
\tilde{D}_r(x):=\sum_{\xi=-r}^r e^{ix\cdot \xi}=\frac{{\rm sin}((r+\frac12)x)}{{\rm sin}(\frac{x}{2})},
\eeno
and it obeys the estimates: for any $p> 1$,
\beno
\|\tilde{D}_r\|_{L^p}\sim r^{1-\frac{1}{p}},
\eeno
where the implicit constant only depend only on $p$.
Define a 2d square
\beno
\Omega_r:=\{(j,k): j,k\in \{-r,..., r\}\}
\eeno
and normalizing to unit size in $L^2$, we obtain a kernel
\beno
D_r(x):=\frac{1}{2r+1}\sum_{\xi\in \Omega_r} e^{ix\cdot \xi}=\frac{1}{2r+1}\sum_{(j,k)\in \Omega_r} e^{i(jx_1+kx_2)}
\eeno
which has the property: for $1< p\leq\infty $
\beno
\|D_r\|_{L^p}\lesssim r^{1-\frac{2}{p}},\quad \|D_r\|_{L^2}\thickapprox 1,
\eeno
where the implicit constant only depend on $p$.
This computation is very easy due to the fact:
\beno
\sum_{\xi\in \Omega_r} e^{ix\cdot \xi}=\Big(\sum_{i=-r}^r e^{ijx_1}\Big) \Big( \sum_{i=-r}^r e^{ikx_2}\Big).
\eeno

We first fixed a large parameter $\lambda\in {\rm N}$, then introduce a parameter $\sigma$ such that $\lambda \sigma\in {\rm N}$ which parameterizes the spacing between frequencies. We assume that
\beno
\sigma r\leq \frac{c_0}{50N},
\eeno
where $c_0$ is the constant in Remark \ref{r:remark about universal constant}, and $N$ is a fixed integer(for example, we can take $N=5$).
Furthermore, we introduce a  parameter $\mu\in (0, \lambda)$, which describes temporal oscillation in the building blocks.

As in \cite{BV}, for $\xi\in \Lambda_j^+$, we define a directed and rescaled  $(\frac{2\pi}{\lambda\sigma})^2$-periodic Dirichlet kernel by
\beno
\eta_{(\xi)}(x,t):=\eta_{\xi, \lambda, \sigma, r, \mu}(x,t)&=&D_r\big(\lambda\sigma N (\xi\cdot x+\mu t),\lambda\sigma N \xi^{\bot}\cdot x \big)\\
&=& \Big(\sum_{j=-r}^r e^{ij\lambda\sigma N (\xi\cdot x+\mu t)}\Big)\Big(\sum_{k=-r}^re^{ik\lambda\sigma N \xi^{\bot}\cdot x}\Big)
\eeno
and set $\eta_{(\xi)}(x,t) =  \eta_{(-\xi)}(x,t)$ for $\xi\in \Lambda_j^-$, where $N$ is a integer(we can set $N=5$ duo to our construction of $\Lambda_0, \Lambda_1$).
Observe that $\eta_{(\xi)}(x,t)$ satisfies the following important identity:
\ben\label{e:transport structure}
\frac{1}{\mu}\partial_t \eta_{(\xi)}(x,t)=\xi\cdot \nabla \eta_{(\xi)}(x,t)
\een
for every $\xi\in \Lambda_j^+$, $j=0,1$.

A change of variable gives that
\ben\label{e:property about Dirichlet kernel}
\fint_{{\rm T}^2} \eta^2_{(\xi)}(x,t)dx=1, \quad \|\eta_{(\xi)}(t)\|_{L^p}\leq C_p r^{1-\frac{2}{p}}
\een
for all $1< p \leq\infty$, for any $t$.

Let $W_{(\xi)}(x)$ be the stationary wave at frequency $\lambda$, namely
\beno
W_{(\xi)}(x):=W_{\xi,\lambda}=i\xi e^{i\lambda \xi^{\bot}\cdot x}.
\eeno
We define the \textit{intermittent plane waves} $\textbf{W}_{(\xi)}$ as
\beno
\textbf{W}_{(\xi)}(x,t)=\textbf{W}_{\xi,\lambda,\sigma,r,\mu}(x,t)=\eta_{\xi,\lambda,\sigma,r,\mu}(x,t)W_{\xi,\lambda}(x)
=\eta_{(\xi)}(x,t)W_{(\xi)}(x).
\eeno

\begin{Remark}
The explicit representation of $\textbf{W}_{(\xi)}(x,t)$ is as following:
\beno
\textbf{W}_{(\xi)}(x,t)&=&\Big(\sum_{j=-r}^r e^{ij\lambda\sigma N (\xi\cdot x+\mu t)}\Big)\Big(\sum_{k=-r}^re^{ik\lambda\sigma N \xi^{\bot}\cdot x}\Big)i\xi e^{i\lambda \xi^{\bot}\cdot x}.
\eeno
\end{Remark}

Some facts about the frequency support of $\eta_{(\xi)}$ and $\textbf{W}_{(\xi)}$:
\begin{Lemma}\label{e:frequency support}
We have the following frequency support property:
\beno
{\rm P}_{\leq 2\lambda \sigma r N}\eta_{(\xi)}=\eta_{(\xi)}, \quad {\rm P}_{\leq 2\lambda} {\rm P}_{\geq \frac{\lambda}{2}}\textbf{W}_{(\xi)}=\textbf{W}_{(\xi)}.
\eeno
For $\xi+\xi^{'}\neq 0$, by the definition of $c_0$, we have
\beno
{\rm P}_{\leq 4\lambda}{\rm P}_{\geq c_0 \lambda}\big(\textbf{W}_{(\xi)}\otimes \textbf{W}_{(\xi^{'})}\big)=\textbf{W}_{(\xi)}\otimes \textbf{W}_{(\xi^{'})}.
\eeno
\end{Lemma}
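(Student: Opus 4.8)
The plan is to read off everything directly from the explicit Fourier representations already written down, since $\eta_{(\xi)}$ and $\mathbf{W}_{(\xi)}$ are finite trigonometric sums. First I would handle $\eta_{(\xi)}$: by its definition as a product of two Dirichlet-type sums, $\eta_{(\xi)}(x,t) = \sum_{j=-r}^r \sum_{k=-r}^r e^{i\lambda\sigma N(j\xi + k\xi^\perp)\cdot x} e^{ij\lambda\sigma N\mu t}$, so every spatial frequency appearing is of the form $\lambda\sigma N(j\xi + k\xi^\perp)$ with $|j|,|k|\le r$. Since $\xi\in S^1$, the Pythagorean bound gives $|\lambda\sigma N(j\xi + k\xi^\perp)| = \lambda\sigma N\sqrt{j^2+k^2}\le \lambda\sigma N\cdot\sqrt2\, r \le 2\lambda\sigma r N$, which yields $\mathrm{P}_{\le 2\lambda\sigma rN}\eta_{(\xi)} = \eta_{(\xi)}$. (The case $\xi\in\Lambda_j^-$ follows since $\eta_{(\xi)} = \eta_{(-\xi)}$ by definition, and $-\xi$ has the same norm.)

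Next, for $\mathbf{W}_{(\xi)} = \eta_{(\xi)} W_{(\xi)}$ with $W_{(\xi)}(x) = i\xi e^{i\lambda\xi^\perp\cdot x}$, multiplication shifts every frequency of $\eta_{(\xi)}$ by $\lambda\xi^\perp$, so a generic spatial frequency of $\mathbf{W}_{(\xi)}$ is $\lambda\xi^\perp + \lambda\sigma N(j\xi + k\xi^\perp) = \lambda\big(\sigma N j\,\xi + (1+\sigma N k)\xi^\perp\big)$. Its modulus is $\lambda\sqrt{(\sigma N j)^2 + (1+\sigma N k)^2}$. Using the standing assumption $\sigma r \le c_0/(50N)$ (hence $\sigma N r \le c_0/50 \le 1/50$), one gets $|\sigma N j|\le 1/50$ and $|1+\sigma N k|\in[1-\tfrac1{50}, 1+\tfrac1{50}]$, so the modulus lies in $[\tfrac\lambda2, 2\lambda]$ comfortably; this is exactly $\mathrm{P}_{\le 2\lambda}\mathrm{P}_{\ge \lambda/2}\mathbf{W}_{(\xi)} = \mathbf{W}_{(\xi)}$.

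For the product $\mathbf{W}_{(\xi)}\otimes\mathbf{W}_{(\xi')}$ with $\xi+\xi'\neq 0$, each entry is a sum of exponentials whose frequencies are sums of one frequency from $\mathbf{W}_{(\xi)}$ and one from $\mathbf{W}_{(\xi')}$. Writing such a sum as $\lambda(\xi^\perp+\xi'^\perp) + O(\lambda\sigma N r)$ in the obvious way, the leading term has modulus $\lambda|\xi^\perp+\xi'^\perp| = \lambda|\xi+\xi'|\ge 2c_0\lambda$ by the definition of $c_0$ in Remark 4.4, while the correction has modulus at most $\lambda\cdot 2\sqrt2\,\sigma N r \le \lambda c_0/10$; hence every frequency has modulus at least $2c_0\lambda - c_0\lambda/10 \ge c_0\lambda$ and at most $4\lambda$ (from adding two frequencies each of modulus $\le 2\lambda$). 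This gives $\mathrm{P}_{\le 4\lambda}\mathrm{P}_{\ge c_0\lambda}(\mathbf{W}_{(\xi)}\otimes\mathbf{W}_{(\xi')}) = \mathbf{W}_{(\xi)}\otimes\mathbf{W}_{(\xi')}$.

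There is no real obstacle here; the only point requiring care is bookkeeping the constants so that the small parameter $\sigma r$ (controlled by $c_0/(50N)$) genuinely absorbs the error terms and keeps all frequencies strictly inside the claimed annuli — in particular checking that the geometric separation constant $c_0$ and the factor $N=5$ chosen for $\Lambda_0,\Lambda_1$ are compatible with the inequality $\sigma r\le c_0/(50N)$. I would just verify these numeric inequalities explicitly and note that the tensor-product case also uses $\xi+\xi'\neq0$ only through $|\xi+\xi'|\ge 2c_0$, which holds for all admissible pairs by Remark 4.4.
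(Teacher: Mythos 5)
Your proof is correct and takes essentially the same route as the paper: read off the explicit Fourier frequencies from the product-of-exponentials form of $\eta_{(\xi)}$, shift by $\lambda\xi^\perp$ for $\mathbf{W}_{(\xi)}$, and use $|\xi+\xi'|\ge 2c_0$ together with the smallness of $\sigma r N$ for the tensor product. Your accounting is actually a bit more explicit than the paper's (which just factors $\mathbf{W}_{(\xi)}\otimes\mathbf{W}_{(\xi')}=-\eta_{(\xi)}\eta_{(\xi')}\,\xi\otimes\xi'\,e^{i\lambda(\xi+\xi')^\perp\cdot x}$ and invokes $4\lambda\sigma rN\le 2c_0\lambda/5$), but the argument is the same.
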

These facts can be obtained directly from the definition. In fact, the frequency support of $\eta_{(\xi)}$ is obvious. Then, using the fact that
$2\lambda \sigma r N\leq \frac{c_0\lambda}{5}$, it's easy to obtain the frequency support of $\textbf{W}_{(\xi)}$. Finally, a direct computation gives that
\beno
\textbf{W}_{(\xi)}\otimes \textbf{W}_{(\xi^{'})}=-\eta_{(\xi)} \eta_{(\xi^{'})}\xi\otimes \xi^{'}e^{i\lambda (\xi+\xi^{'})^{\bot}\cdot x}.
\eeno
Due to
\beno
{\rm P}_{\leq 4\lambda \sigma r N}(\eta_{(\xi)} \eta_{(\xi^{'})})=\eta_{(\xi)} \eta_{(\xi^{'})}, \quad |\xi+\xi^{'}|\geq 2c_0
\eeno
and the fact
\beno
4\lambda \sigma r N\leq \frac{2\lambda c_0}{5},
\eeno
we obtain the frequency support of $\textbf{W}_{(\xi)}\otimes \textbf{W}_{(\xi^{'})}$ for $\xi+\xi^{'}\neq 0$.

From these frequency support properties (from which we can use the Berstein inequality) and the estimates for Dirichlet kernel, we have the following estimates.
\begin{Proposition}\label{e:estimate on building blocks}
Let $\textbf{W}_{(\xi)}$ be defined as above. Then
\begin{align}
&\|\nabla^N \partial_t^K\textbf{W}_{(\xi)} \|_{L^p}\leq C(N,K,p)\lambda^N (\lambda \sigma \mu r)^K r^{1-\frac{2}{p}},\nonumber\\
&\|\nabla^N \partial_t^K \eta_{(\xi)} \|_{L^p}\leq C(N,K,p) (\lambda \sigma r)^N (\lambda \sigma \mu r)^K r^{1-\frac{2}{p}},\nonumber
\end{align}
for any $1< p \leq\infty$, and $N ,K \geq 0$ are integer.
\end{Proposition}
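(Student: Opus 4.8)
The plan is to prove Proposition \ref{e:estimate on building blocks} by reducing everything to the one-variable estimate for the (rescaled, normalized) Dirichlet kernel, together with the frequency localization recorded in Lemma \ref{e:frequency support} and the Bernstein inequality. Recall that $\textbf{W}_{(\xi)}(x,t) = \eta_{(\xi)}(x,t) W_{(\xi)}(x)$ with $W_{(\xi)}(x) = i\xi e^{i\lambda \xi^\bot \cdot x}$, and $\eta_{(\xi)}$ is built from $D_r$ composed with the linear maps $x \mapsto \lambda\sigma N(\xi\cdot x + \mu t)$ and $x\mapsto \lambda\sigma N \xi^\bot \cdot x$. Since $|W_{(\xi)}| = 1$ pointwise, the $L^p$-norms of $\textbf{W}_{(\xi)}$ and its derivatives are controlled by those of $\eta_{(\xi)}$ once we account for the extra frequency $\lambda$ coming from the $e^{i\lambda \xi^\bot\cdot x}$ factor.

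The key steps, in order, are as follows. First I would establish the estimate on $\eta_{(\xi)}$. Because $\eta_{(\xi)}(x,t) = D_r\big(\lambda\sigma N(\xi\cdot x + \mu t), \lambda\sigma N\xi^\bot\cdot x\big)$ and $\xi,\xi^\bot$ are unit vectors, each spatial derivative $\nabla$ falling on $\eta_{(\xi)}$ produces a factor $\lambda\sigma N \sim \lambda\sigma$ times a derivative of $D_r$, and each time derivative $\partial_t$ produces a factor $\lambda\sigma N\mu \sim \lambda\sigma\mu$. Using the transport identity \eqref{e:transport structure}, $\tfrac1\mu \partial_t \eta_{(\xi)} = \xi\cdot\nabla\eta_{(\xi)}$, one may even trade each $\partial_t$ for $\mu$ times a $\nabla$, so that $\nabla^N\partial_t^K \eta_{(\xi)}$ is $\mu^K$ times $\nabla^{N+K}\eta_{(\xi)}$, which scales like $(\lambda\sigma)^{N+K}$ times an $(N+K)$-th derivative of $D_r$. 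Since $D_r$ is a trigonometric polynomial supported on frequencies $|\zeta|\le r$ in each of its two variables, the Bernstein inequality gives $\|\nabla^m D_r\|_{L^p} \lesssim r^m \|D_r\|_{L^p} \lesssim r^m \cdot r^{1-\frac2p}$ for $1<p\le\infty$ (using the normalized kernel estimate $\|D_r\|_{L^p}\lesssim r^{1-2/p}$ recorded above and \eqref{e:property about Dirichlet kernel}); a change of variables then converts this into $\|\nabla^N\partial_t^K\eta_{(\xi)}\|_{L^p} \le C(N,K,p)(\lambda\sigma r)^N(\lambda\sigma\mu r)^K r^{1-\frac2p}$, which is the second asserted bound.

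Second, for $\textbf{W}_{(\xi)} = \eta_{(\xi)} W_{(\xi)}$ I would expand $\nabla^N\partial_t^K(\eta_{(\xi)}W_{(\xi)})$ by the Leibniz rule. Since $W_{(\xi)}$ is $t$-independent, all $\partial_t$ land on $\eta_{(\xi)}$; a spatial derivative lands either on $\eta_{(\xi)}$, costing $\lambda\sigma r$, or on $W_{(\xi)}$, costing $\lambda$ (as $|\nabla^j W_{(\xi)}| \lesssim \lambda^j$). Because $\sigma r \le c_0/(50N) < 1$, the factor $\lambda$ dominates $\lambda\sigma r$, so the worst term is the one where all $N$ spatial derivatives hit $W_{(\xi)}$, giving a prefactor $\lambda^N(\lambda\sigma\mu r)^K$ multiplying $\|\eta_{(\xi)}\|_{L^p} \lesssim r^{1-2/p}$. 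Collecting terms yields $\|\nabla^N\partial_t^K\textbf{W}_{(\xi)}\|_{L^p} \le C(N,K,p)\lambda^N(\lambda\sigma\mu r)^K r^{1-\frac2p}$, the first asserted bound. Alternatively, one can invoke the frequency-support statement ${\rm P}_{\le 2\lambda}{\rm P}_{\ge \lambda/2}\textbf{W}_{(\xi)} = \textbf{W}_{(\xi)}$ from Lemma \ref{e:frequency support} to apply Bernstein directly to $\textbf{W}_{(\xi)}$: each $\nabla$ costs $\lambda$, reducing the estimate to $K=N=0$, i.e. to $\|\textbf{W}_{(\xi)}\|_{L^p} = \|\eta_{(\xi)}\|_{L^p} \lesssim r^{1-2/p}$; the $\partial_t$ derivatives are handled as before via \eqref{e:transport structure} or directly.

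The main obstacle, such as it is, is bookkeeping: one must check that all the implicit constants depend only on $N$, $K$, $p$ (and the fixed geometric data $\Lambda_0,\Lambda_1,N,c_0$), and in particular are uniform in the parameters $\lambda,\sigma,r,\mu$; this requires being slightly careful that the change of variables in \eqref{e:property about Dirichlet kernel} and the Bernstein constants do not secretly depend on the scales. There is no analytic difficulty — the estimate is essentially a scaling computation once the normalized kernel bound $\|D_r\|_{L^p}\lesssim r^{1-2/p}$ and the frequency localization are in hand — but the cleanest write-up is to prove the $\eta_{(\xi)}$ bound first, then deduce the $\textbf{W}_{(\xi)}$ bound, using $\sigma r < 1$ to see that the velocity frequency $\lambda$ dominates the kernel frequency $\lambda\sigma r$.
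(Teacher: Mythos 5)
Your argument is correct and is essentially the argument the paper has in mind: the paper explicitly says the estimates follow "from these frequency support properties (from which we can use the Berstein inequality) and the estimates for Dirichlet kernel," and then omits the proof. Your write-up — reducing to the $L^p$ bound for $D_r$ via the scaling/chain rule, using Bernstein on the frequency-localized kernel, trading $\partial_t$ for $\mu\nabla$ via \eqref{e:transport structure}, and using $\sigma r<1$ to see that $\lambda$ dominates $\lambda\sigma r$ when derivatives fall on $W_{(\xi)}$ — is precisely the intended computation, with no gaps.
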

These estimates are direct, and we omit the proof here.



\section{The velocity perturbation: Construction and Estimates}

In this section, we construct the perturbation of velocity and give some estimates for it.
\subsection{Construction of the velocity perturbation} In this subsection, we give the detailed construction of velocity perturbation.

\subsubsection{Definition of amplitude}
Choose two smooth cutoff functions $\tilde{\chi}_0, \tilde{\chi}_1$ such that
\beno
{\rm supp}\tilde{\chi}_0\subseteq [0, 4], \quad {\rm supp}\tilde{\chi}_1\subseteq \Big[\frac14, 4\Big]
\eeno
and
\beno
\tilde{\chi}_0^2(y)+\sum_{j\geq 1}\tilde{\chi}_{j}^2(y)=1
\eeno
for any $y>0$, where $\tilde{\chi}_{j}(y)=\tilde{\chi}(4^{-j}y)$. We then define
\ben\label{d:definition of cutoff function}
\chi_j(t,x):= \tilde{\chi}_{j}\Big(\Big<\frac{50\mathring{R}_0(t,x)}{\varepsilon_0\delta}\Big>\Big)
\een
for all $j\geq 0.$ Here and throughout the paper we use the notation $\big<A\big>=(1+|A|^2)^{\frac12}$ where $|A|$ denotes the standard norm of the matrix $A$. By the definition of the cutoff functions, we have
\ben\label{e:basic property of cutoff function}
\sum_{j\geq 0}\chi_j^2(t,x)=1,\quad \chi_{j_1} \chi_{j_2}(t,x)=0 \quad {\rm if} \quad |j_1-j_2|\geq 2.
\een
Moreover, it's obvious that there exists an index $j_{max}=j_{max}(\mathring{R}_0, \delta)$ such that $\chi_j(t,x)=0$ for all $j\geq j_{max}.$ Another important fact is
\ben\label{e:positive property of zero term}
\int_{{\rm T}^2}\chi^2_0(t,x)dx> 0 , \quad \forall t\in [0,1].
\een
In fact,
\beno
\chi_0(t,x)=0 \Leftrightarrow \Big<\frac{50 \mathring{R}_0(t,x)}{\varepsilon_0\delta}\Big> \geq 4.
\eeno
Thus, for fixed $t\in [0,1]$, we have
\beno
\chi_0(t,x)=0 \Rightarrow 50 |\mathring{R}_0(t,x)|\geq 3\varepsilon_0\delta.
\eeno
Hence, for fixed $t\in [0,1]$, we have
\beno
\int_{{\rm T}^2}|\mathring{R}_0(t,x)|dx\geq \frac{3}{50}\varepsilon_0\delta.
\eeno
Thus, the assumption (\ref{e:assumption on error 1}) tells us that for every $t\in [0,1]$, $\int_{{\rm T}^2}\chi^2_0(t,x)dx> 0.$

For $j \in {\rm N}$, denote
\begin{align*}
\Lambda_{(j)} = \Lambda_{j ~ \text{mod} ~ 2}, \quad \Lambda_{(j)}^{\pm} = \Lambda_{j~ \text{mod} ~2}^{\pm}.
\end{align*}

For $\xi \in \Lambda_{(j)}^+$, define the coefficient function $a_{(\xi,j)}$ by
\ben\label{e:amplitude definition}
a_{(\xi,j)}(t,x):=\sqrt{\rho_j}\chi_j(t,x)\gamma_{\xi}\Big({\rm Id}-\frac{\mathring{R}_0}{\rho_j}\Big)
\een
 where $\rho_j$, $j\geq 1$, are defined by
\ben\label{d:difinition of j-amplitude}
\rho_j:= 4^{j}\delta,
\een
and $\rho_0$ is defined later.

We first claim that $a_{(\xi,j)}(t,x)$ is well-defined for $j\geq 1$. We only need to show that ${\rm Id}-\frac{\mathring{R}_0}{\rho_j}\in B_{\varepsilon_0}({\rm Id})$. In fact, when $\chi_j\neq 0$, there holds
\beno
\Big<\frac{50\mathring{R}_0(t,x)}{\varepsilon_0 \delta}\Big>\leq 4^{j+1},
\eeno
which implies
\beno
\frac{50|\mathring{R}_0(t,x)|}{\varepsilon_0\delta}\leq 4^{j+1}.
\eeno
Thus,
\beno
\frac{|\mathring{R}_0(t,x)|}{\rho_j}=\frac{|\mathring{R}_0(t,x)|}{4^{j}\delta}\leq \varepsilon_0,
\eeno
thus $a_{(\xi,j)}(t,x)$ is well-defined for $j\geq 1$.
We define $\rho_0(t)$ as following:
\ben\label{d:definition of zero amplitude}
\rho_0(t):=\frac{1}{2}\Big(\int_{{\rm T}^2}\chi_0^2(t,x)dx\Big)^{-1}\Big[e(t)\Big(1-\frac{\delta}{2}\Big)-\int_{{\rm T}^2}|v_0(t,x)|^2dx\Big].
\een
Due to (\ref{e:positive property of zero term}), we deduce that $\rho_0(t)$ is well-defined. Next, we show that
 \beno
\Big \|\frac{\mathring{R}_0}{\rho_0}\Big\|_{L^\infty({\rm supp \chi_0)}}\leq \varepsilon_0.
 \eeno
In fact, using the assumption (\ref{e:assumtion on energy 1}), we know that
\ben\label{e:energy difference estimate 1}
\frac{\delta}{4}e(t)\leq e(t)\Big(1-\frac{\delta}{2}\Big)-\int_{{\rm T}^2}|v_0(t,x)|^2dx\leq \frac{3\delta}{4}e(t), \quad \forall t\in [0, 1].
\een
On the support of $\chi_0$, there holds
\beno
50 |\mathring{R}_0(t,x)|\leq 4\varepsilon_0\delta.
\eeno
Thus, on the support of $\chi_0$, for any $t\in [0,1]$, there holds
\beno
\Big|\frac{\mathring{R}_0(t,x)}{\rho_0(t)}\Big|\leq \frac{4\varepsilon_0\delta}{50}\frac{8\int_{{\rm T}^2}\chi_0^2(t,x)dx}{\delta e(t)}\leq \varepsilon_0.
\eeno
Thus, $a_{(\xi, 0)}$ is well-defined.

\subsubsection{Construction of velocity perturbation}
Let us fix $\lambda_1, \sigma_1, r_1, \mu_1$ such that $\lambda_1 \sigma_1  \in {\rm N}$ and the integer $r_1$, the parameter $\sigma_1$ and $\mu_1$ are defined by
\ben\label{r:relationship of papameter}
r_1=[\lambda_1^\alpha], \quad \sigma_1=\lambda_1^{-\frac{1+\alpha}{2}}, \quad \mu_1=[\lambda_1^{\frac{3\alpha+1}{4}}].
\een

The principle part of perturbation $w^p_{1}$  will be defined as
\ben\label{e:principle part}
w_{1}^p = \frac{1}{\sqrt{2}} \sum_{j} \sum_{\xi \in \Lambda_{(j)}^+} a_{(\xi,j)} \eta_{(\xi)}(W_{(\xi)}+W_{(-\xi)}) = \frac{1}{\sqrt{2}} \sum_{j} \sum_{\xi \in \Lambda_{(j)}} a_{(\xi,j)} \eta_{(\xi)}W_{(\xi)}
\een
where $0\leq j\leq j_{max}$. Here and throughout the paper, $\eta_{(\xi)}, W_{(\xi)}$ denote, respectively,
\beno
\eta_{(\xi)}=\eta_{\xi, \lambda_1,\sigma_1,r_1,\mu_1},\quad
 W_{(\xi)} =W_{\xi, \lambda_1},
\eeno
where $\xi \in \Lambda_{(j)}$.

 Then we define an incompressibility corrector
\ben\label{e:corretor part}
w^{c}_{1}:=-\frac{1}{\sqrt{2}}\sum_{j} \sum_{\xi \in \Lambda_{(j)}^+} \nabla^{\bot} \big(a_{(\xi,j)} \eta_{(\xi)}\big)\frac{e^{i \lambda_1 \xi^{\bot}\cdot x}+e^{-i  \lambda_1 \xi^{\bot}\cdot x}}{\lambda_1}.
\een
Here and throughout the paper, we denote $\nabla^{\bot}$ as $\nabla^{\bot}=(-\partial_2, \partial_1).$
Thus, we have
\beno
w^{p}_{1}+w^{c}_{1} = -\frac{1}{\sqrt{2}}\sum_{j} \sum_{\xi \in \Lambda_{(j)}^+} \nabla^{\bot} \Big(a_{(\xi,j)} \eta_{(\xi)}\frac{e^{i \lambda_1 \xi^{\bot}\cdot x}+e^{-i  \lambda_1 \xi^{\bot}\cdot x}}{\lambda}\Big)
\eeno
and
\beno
{\rm div}(w^{p}_{1}+w^{c}_{1})=0.
\eeno
As in paper \cite{BV}, in addition to the incompressibility corrector $w_{1}^{c}$, we introduce a temporal corrector $w_{1}^{t}$, which is defined by
\ben\label{e:defination of time corrector}
w_{1}^{t}:= -\frac{1}{\mu}\sum_{j} \sum_{\xi \in \Lambda_{(j)}^+} P_H P_{\neq 0}\big(a^2_{(\xi,j)}\eta^2_{(\xi)}\xi\big).
\een
Here $P_{\neq0}f=f-\fint_{{\rm T}^2}fdx$ and $P_Hf=f-\nabla\triangle^{-1}{\rm div}f.$
Finally, we define the velocity increment $w_{1}$ by
\beno
w_{1}=w^{p}_{1}+w^{c}_{1}+w^{t}_{1}.
\eeno
It's obvious that
\beno
{\rm div}w_{1}=0,\quad \fint_{{\rm T}^2}w_{1}(t,x)dx=0.
\eeno
After the construction of $w_{1}$, we define the new velocity field $v_{1}$ as
\beno
v_{1}:=v_0+ w_{1}.
\eeno

\subsection{Estimate of the perturbation}In this subsection, we establish some estimates for the velocity perturbation.

Firstly, we collect some estimates concerning the cutoffs function $\chi_j(t,x)$.
\begin{Lemma}\label{e:pointwise estimate on cutoff function}
There exists a $j_{max}=j_{max}(\mathring{R}_0, \delta)$ such that
\beno
\chi_j(t,x)=0, \quad {\rm for \quad all} \quad j> j_{max}.
\eeno
Moreover, for all $0\leq j\leq j_{max}$, there holds
\beno
\rho_j \leq 4^{j_{max}}.
\eeno
\end{Lemma}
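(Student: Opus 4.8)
The statement to prove, Lemma 5.2 (labeled \texttt{e:pointwise estimate on cutoff function}), asserts two things: first, the existence of a finite index $j_{max}=j_{max}(\mathring R_0,\delta)$ beyond which all cutoffs $\chi_j$ vanish identically, and second, that $\rho_j\le 4^{j_{max}}$ for every $0\le j\le j_{max}$.

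\medskip

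\textbf{Plan.}
The first assertion is essentially already recorded in the text preceding the statement, so my plan is to make that observation precise. Recall $\chi_j(t,x)=\tilde\chi_j\bigl(\langle 50\mathring R_0(t,x)/(\varepsilon_0\delta)\rangle\bigr)$ with $\tilde\chi_j(y)=\tilde\chi(4^{-j}y)$ and $\mathrm{supp}\,\tilde\chi_j\subseteq[\tfrac14,4]$ for $j\ge 1$ (and $\mathrm{supp}\,\tilde\chi_0\subseteq[0,4]$). Hence $\chi_j(t,x)\neq 0$ forces $4^{-j}\langle 50\mathring R_0(t,x)/(\varepsilon_0\delta)\rangle\ge \tfrac14$, i.e. $4^{j+1}\ge \langle 50\mathring R_0(t,x)/(\varepsilon_0\delta)\rangle$. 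Since $\mathring R_0\in C^\infty([0,1]\times{\rm T}^2)$ and $[0,1]\times{\rm T}^2$ is compact, $\|\mathring R_0\|_{L^\infty}<\infty$, so $\langle 50\mathring R_0/(\varepsilon_0\delta)\rangle$ is bounded by some finite constant $C=C(\mathring R_0,\delta)$ on the whole space-time domain. Choosing $j_{max}$ to be the least integer with $4^{j_{max}+1}> C$ (equivalently $4^{j_{max}}\ge \tfrac14\langle 50\|\mathring R_0\|_{L^\infty}/(\varepsilon_0\delta)\rangle$, rounded up) guarantees $\chi_j(t,x)=0$ for every $(t,x)$ and every $j>j_{max}$. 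I would state this choice explicitly as the definition of $j_{max}$.

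\medskip

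For the second assertion, for $j\ge 1$ we have $\rho_j=4^j\delta\le 4^j$ since $\delta\le 1$ by hypothesis, and $4^j\le 4^{j_{max}}$ for $j\le j_{max}$; this gives $\rho_j\le 4^{j_{max}}$ immediately. The only point requiring a short argument is $j=0$: here $\rho_0(t)=\tfrac12\bigl(\int_{{\rm T}^2}\chi_0^2\,dx\bigr)^{-1}\bigl[e(t)(1-\tfrac\delta2)-\int_{{\rm T}^2}|v_0|^2\,dx\bigr]$. Using the already-established bound \eqref{e:energy difference estimate 1}, namely $e(t)(1-\tfrac\delta2)-\int|v_0|^2\le \tfrac{3\delta}{4}e(t)$, and a lower bound $\int_{{\rm T}^2}\chi_0^2\,dx>0$ (which follows from \eqref{e:positive property of zero term}, and quantitatively from \eqref{e:assumption on error 1} as shown in the text: $\int\chi_0^2\,dx$ is controlled below in terms of $\|\mathring R_0\|_{L^\infty}$, $\delta$ and $e(t)$), one bounds $\rho_0(t)$ by a finite constant depending only on $\mathring R_0,\delta,e$. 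Enlarging $j_{max}$ if necessary so that $4^{j_{max}}$ also dominates $\sup_{t}\rho_0(t)$ — which is harmless since $j_{max}$ is merely required to be some finite index with the stated properties — yields $\rho_0\le 4^{j_{max}}$ as well. I would fold this enlargement into the definition of $j_{max}$ from the start, i.e. take $j_{max}$ to be the smallest integer with both $4^{j_{max}+1}>\langle 50\|\mathring R_0\|_{L^\infty}/(\varepsilon_0\delta)\rangle$ and $4^{j_{max}}\ge \sup_{t\in[0,1]}\rho_0(t)$.

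\medskip

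\textbf{Main obstacle.} There is no serious obstacle here; the lemma is a bookkeeping statement isolating the finiteness of the sum over $j$ in the definition \eqref{e:principle part} of $w_1^p$. The only mildly delicate point is ensuring $\int_{{\rm T}^2}\chi_0^2(t,x)\,dx$ is bounded \emph{below} uniformly in $t$, so that $\rho_0(t)$ does not blow up; but this is exactly the content of the computation already carried out before the statement (the implication $\chi_0(t,x)=0\Rightarrow 50|\mathring R_0(t,x)|\ge 3\varepsilon_0\delta$, hence $\int|\mathring R_0(t,\cdot)|\ge\tfrac{3}{50}\varepsilon_0\delta$ forces the set $\{\chi_0(t,\cdot)\neq 0\}$ to have positive, quantitatively controlled measure via \eqref{e:assumption on error 1}), so I would simply invoke it. The continuity of $t\mapsto\int\chi_0^2\,dx$ on the compact interval $[0,1]$ then upgrades positivity to a uniform positive lower bound, completing the estimate on $\rho_0$.
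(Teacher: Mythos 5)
Your plan for the first assertion matches the paper's approach: exploit the compactness of $[0,1]\times{\rm T}^2$ and smoothness of $\mathring{R}_0$ to bound $\big\langle 50\mathring{R}_0/(\varepsilon_0\delta)\big\rangle$ by a constant $C$, then use the support of $\tilde\chi_j$ to force $\chi_j\equiv 0$ once $j$ is large. Two small slips, though: the ``i.e.'' is inverted --- from $4^{-j}\langle\cdot\rangle\geq\tfrac14$ the correct deduction is the \emph{lower} bound $\langle\cdot\rangle\geq 4^{j-1}$, not $\langle\cdot\rangle\leq 4^{j+1}$, and it is this lower bound that yields vanishing for large $j$; consequently your concrete choice $4^{j_{max}+1}>C$ is off by one (you need $4^{j_{max}}>C$, so that $j>j_{max}$ gives $4^{j-1}\geq 4^{j_{max}}>C\geq\langle\cdot\rangle$, a contradiction). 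Neither slip is fatal since the lemma merely asserts existence, and the paper itself only records the crude bound $4^{j_{max}}\leq 800\|\mathring{R}_0\|_\infty/(\varepsilon_0\delta)$ with comparable imprecision.

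On the second assertion $\rho_j\leq 4^{j_{max}}$, your proposal is actually \emph{more} complete than the paper's: the paper's proof of this lemma addresses only the first claim and gives no argument whatsoever for $\rho_j\leq 4^{j_{max}}$. Your observation that $j\geq 1$ is immediate ($\rho_j=4^j\delta\leq 4^j\leq 4^{j_{max}}$ using $\delta\leq 1$) is the presumably-intended argument, and your treatment of $j=0$ --- bounding $\rho_0$ uniformly via continuity of $t\mapsto\int\chi_0^2\,dx$ on the compact interval $[0,1]$ together with (\ref{e:positive property of zero term}) and (\ref{e:energy difference estimate 1}), then enlarging $j_{max}$ --- is correct and fills a genuine gap. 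Do note, however, that after this enlargement $j_{max}$ depends on $v_0$ and $e$ through $\rho_0$, which is at odds with the lemma's stated dependency $j_{max}(\mathring{R}_0,\delta)$; this is a latent inconsistency in the paper (the constants in Lemma \ref{e:estimate on the amplitde} are written as $C(\mathring{R}_0,\delta,L)$ but implicitly carry $v_0,e$ dependence via $\rho_0$), not an error you introduced.
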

\begin{proof}
For $j\geq 1$,
\beno
\chi_j(t,x)\neq 0\Leftrightarrow 4^{j-1}\leq\Big<\frac{50\mathring{R}_0(t,x)}{\varepsilon_0\delta}\Big>\leq 4^{j+1}.
\eeno
Thus, $\chi_j(t,x)\neq 0$ implies
\beno
4^{j-2}\leq  \frac{50|\mathring{R}_0|}{\varepsilon_0\delta}.
\eeno
Thus, there exists $j_{max}=j_{max}(\mathring{R}_0, \delta)$ such that
\beno
\chi_j(t,x)=0, \quad {\rm for \quad all} \quad j> j_{max}.
\eeno
More precisely, we have
\ben\label{e:estimate on number}
4^{j_{max}}\leq \frac{800|\mathring{R}_0|}{\varepsilon_0\delta}.
\een
\end{proof}

\begin{Lemma}\label{e:$L^2$ and derivative estimate on cutoff function}
Let  $0\leq j\leq j_{max}$. There holds
\beno
 \|\chi_j\|_{C^L_{t,x}}\leq C(\mathring{R}_0, \delta,L),
\eeno
where $L$ is integer, and the constant $C$ also depend on $\varepsilon_0$, but $\varepsilon_0$ is a universal constant and we omit it.
\end{Lemma}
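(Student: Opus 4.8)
The statement to prove is Lemma 5.3 (the $C^L_{t,x}$ bound on the cutoffs $\chi_j$), for which the plan is straightforward.

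\textbf{Approach.} The cutoff $\chi_j(t,x) = \tilde\chi_j\big(\langle 50\mathring R_0(t,x)/(\varepsilon_0\delta)\rangle\big)$ is obtained by composing the fixed smooth bump $\tilde\chi_j = \tilde\chi(4^{-j}\cdot)$, the smooth map $A \mapsto \langle A\rangle = (1+|A|^2)^{1/2}$, and the smooth matrix-valued function $(t,x)\mapsto 50\mathring R_0(t,x)/(\varepsilon_0\delta)$. Since all three ingredients are $C^\infty$, so is $\chi_j$; the content is only the \emph{quantitative} bound, and the key point is that it is allowed to depend on $\mathring R_0$, $\delta$, $\varepsilon_0$ and $L$. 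This is exactly the regime in which the chain rule (Fa\`a di Bruno) gives a clean answer.

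\textbf{Key steps.} First I would record that $j$ ranges only over $0 \le j \le j_{max}$ with $j_{max} = j_{max}(\mathring R_0,\delta)$ finite by Lemma 5.2, so it suffices to bound each of finitely many $\chi_j$, and a constant depending on $\mathring R_0,\delta,L$ may absorb the (finite) maximum over $j$. Second, for a single $j$, write $\chi_j = g_j \circ G$ where $G(t,x) = \langle 50\mathring R_0(t,x)/(\varepsilon_0\delta)\rangle$ is a fixed smooth scalar function on $[0,1]\times\mathbb T^2$ and $g_j = \tilde\chi_j$. By the multivariate Fa\`a di Bruno formula, for any multi-index of order $\le L$ in $(t,x)$,
\[
\big|\partial^{\le L}(g_j\circ G)\big| \;\lesssim_L\; \sum_{k=1}^{L} \|g_j^{(k)}\|_{L^\infty}\,\big(\|G\|_{C^L_{t,x}}\big)^{k},
\]
a finite sum of products of derivatives of $g_j$ with derivatives of $G$. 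Third, bound the two factors: $\|g_j^{(k)}\|_{L^\infty} = 4^{-jk}\|\tilde\chi^{(k)}\|_{L^\infty} \le \|\tilde\chi\|_{C^L}$, a universal constant, and $\|G\|_{C^L_{t,x}} \le C(\mathring R_0,\delta,L)$ because $G$ is a smooth function of the smooth (hence $C^\infty$ on the compact domain $[0,1]\times\mathbb T^2$) function $\mathring R_0$ divided by the fixed constant $\varepsilon_0\delta/50$ — here one uses that $A\mapsto\langle A\rangle$ is smooth with all derivatives locally bounded and that $\|\mathring R_0\|_{C^L_{t,x}} < \infty$. Combining these gives $\|\chi_j\|_{C^L_{t,x}} \le C(\mathring R_0,\delta,L)$ uniformly in $0\le j\le j_{max}$, as claimed.

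\textbf{Main obstacle.} There is no real obstacle; this is a routine chain-rule estimate. The only thing to be slightly careful about is the bookkeeping in Fa\`a di Bruno so that one genuinely gets a bound depending only on $\|\mathring R_0\|_{C^L}$ (and $\delta$, $\varepsilon_0$, $L$) and not on any norm of higher order, and noting that the $j$-dependence of $\tilde\chi_j$ only \emph{helps} (the scaling factors $4^{-jk}\le 1$), so the bound is in fact uniform in $j$. One should also mention explicitly that the constant is permitted to blow up as $\delta\to 0$, which is harmless since $\delta$ is fixed at each stage of the iteration.
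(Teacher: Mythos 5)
Your proposal is correct and follows essentially the same route as the paper: both decompose $\chi_j$ as the composition of the dilated bump $\tilde\chi_j=\tilde\chi(4^{-j}\cdot)$, the map $A\mapsto\langle A\rangle$, and the smooth tensor $\mathring R_0/(\varepsilon_0\delta)$, and then apply a chain-rule estimate for $C^L$ norms of compositions. The only cosmetic difference is that the paper cites the composition inequality $\|f\circ g\|_{C^L}\leq C_L(\|f\|_{C^L}\|g\|_{C^1}^L+\|f\|_{C^1}\|g\|_{C^L})$ and first records the explicit pointwise formulas for $\partial_l\langle A\rangle$ and $\partial_{ll}\langle A\rangle$ (giving $|\partial_l\langle A\rangle|\leq|\partial_l A|$), whereas you invoke Fa\`a di Bruno in its generic form; both observe that the factors $4^{-jk}\leq 1$ make the bound uniform in $j$, which is what ultimately lets the constant depend only on $\mathring R_0,\delta,L$.
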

\begin{proof}
Direct computation gives that
\beno
\partial_l(\big<A\big>)=\frac{A: \partial_lA}{\big<A\big>},\quad \partial_{ll}(\big<A\big>)=\frac{\partial_lA: \partial_lA+ A:\partial_{ll}A}{\big <A\big>}- \frac{(A:\partial_lA)^2}{\big <A\big>^2}
\eeno
hence
\beno
|\partial_l(\big<A\big>)|\leq |\partial_lA|.
\eeno
Since
\beno
\partial_l(\chi_j(t,x))=\tilde{\chi}'\Big(\frac{1}{4^j}\Big<\frac{\mathring{R}_0}{\varepsilon_0\delta}\Big>\Big)
\frac{1}{4^j}\partial_l\Big<\frac{\mathring{R}_0}{\varepsilon_0\delta}\Big>,
\eeno
thus
\beno
|\partial_l(\chi_j(t,x))|\leq \frac{C}{4^j}\frac{|\partial_l\mathring{R}_0|}{\varepsilon_0\delta}\leq C(\varepsilon_0, \mathring{R}_0, \delta), \quad |\partial_{ll}(\chi_j(t,x))|\leq C(\varepsilon_0, \mathring{R}_0, \delta).
\eeno
By the inequality
\beno
\|f\circ g\|_{C^L}\leq C_L\big(\|f\|_{C^L}\|g\|_{C^1}^L+ \|f\|_{C^1}\|g\|_{C^L}\big),
\eeno
we know that
\beno
\|\chi_j \|_{C^L_{t,x}}\leq C(\varepsilon_0, \mathring{R}_0, \delta, L).
\eeno
\end{proof}

\begin{Lemma}[Estimate on the amplitude]\label{e:estimate on the amplitde}
For $0\leq j\leq j_{max}$, we have
\beno
\|a_{(\xi,j)}\|_{L^\infty}&\leq & \sqrt{\rho_j} \leq 2^{j}\sqrt{\delta},\\
\|a_{(\xi,j)}\|_{C^L_{t,x}}&\leq& C(\mathring{R}_0, \delta,L) .
\eeno
\end{Lemma}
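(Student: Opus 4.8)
The plan is to write $a_{(\xi,j)}=\sqrt{\rho_j}\,\chi_j\,\gamma_\xi\big({\rm Id}-\tfrac{\mathring{R}_0}{\rho_j}\big)$ and bound the three factors separately. The cutoff satisfies $|\chi_j|\le 1$ everywhere, since $\sum_{j\ge 0}\chi_j^2=1$ by \eqref{e:basic property of cutoff function}. The geometric coefficient is evaluated only on ${\rm supp}\,\chi_j$ — the only place $a_{(\xi,j)}$ is nonzero — and there the computations already recorded in the construction confine $\mathring{R}_0/\rho_j$ to a fixed closed ball $\overline{B_{r_0}(0)}\subset B_{\varepsilon_0}(0)$ ($r_0=4\varepsilon_0/50$ for $j\ge1$, $r_0\le\varepsilon_0$ for $j=0$). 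Since $\gamma_{e_1}^2({\rm Id})=\tfrac{7}{16}$ and $\gamma_{\frac35 e_1\pm\frac45 e_2}^2({\rm Id})=\tfrac{25}{32}$ by the explicit representation of ${\rm Id}$ displayed before the Geometric Lemma \ref{l:geometric lemma}, I will shrink the geometric constant $\varepsilon_0$ if necessary so that $\gamma_\xi^2\le 1$ on all of $B_{\varepsilon_0}({\rm Id})$; then $|\gamma_\xi(\cdot)|\le 1$ on ${\rm supp}\,\chi_j$, and multiplying the three bounds gives $\|a_{(\xi,j)}\|_{L^\infty}\le\sqrt{\rho_j}$. The inequality $\sqrt{\rho_j}\le 2^j\sqrt{\delta}$ is then the equality $\rho_j=4^j\delta$ when $j\ge1$.

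The case $j=0$ is the only one with real content, and I expect it to be the main obstacle. Combining the definition \eqref{d:definition of zero amplitude} with the energy-gap inequality \eqref{e:energy difference estimate 1} gives $\rho_0(t)\le\tfrac{3\delta}{8}e(t)\big(\int_{{\rm T}^2}\chi_0^2(t,x)\,dx\big)^{-1}$, so the task reduces to a lower bound $\int_{{\rm T}^2}\chi_0^2(t,\cdot)\,dx\ge c>0$ uniform in $t$. This is where I would use the hypothesis \eqref{e:assumption on error 1} on $\|\mathring{R}_0\|_{L^\infty_t L^1_x}$: by the construction of the $\chi_j$, the function $\chi_0$ is bounded away from $0$ (indeed equals $1$) wherever $\langle 50\mathring{R}_0/\varepsilon_0\delta\rangle$ stays in a fixed neighbourhood of $1$, i.e. wherever $|\mathring{R}_0|$ is below a fixed multiple of $\varepsilon_0\delta$; on the complement one has $|\mathring{R}_0|$ above that multiple, so Chebyshev's inequality together with $\|\mathring{R}_0(t,\cdot)\|_{L^1}\le\varepsilon_0\delta/10000$ bounds the measure of the complement by a small fixed fraction of $|{\rm T}^2|$. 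Integrating yields the lower bound, hence $\rho_0(t)\le C\delta$ with $C$ depending only on the fixed function $e$ — which is exactly the claimed $\sqrt{\rho_0}\le 2^0\sqrt{\delta}$ up to this iteration-independent constant. The delicate point is to check that the overlap region of the partition of unity $\{\chi_j\}$ does not spoil the lower bound; I would handle this by running Chebyshev on the slightly shrunken super-level set on which $\chi_0$ is genuinely bounded below, and using $\chi_0^2=1$ on its complement.

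Finally, for the $C^L_{t,x}$ estimate I would differentiate the product $\sqrt{\rho_j}\,\chi_j\,\gamma_\xi\big({\rm Id}-\tfrac{\mathring{R}_0}{\rho_j}\big)$ using the Leibniz rule and the higher-order chain rule. The cutoff $\chi_j$ is controlled in every $C^L_{t,x}$ by Lemma \ref{e:$L^2$ and derivative estimate on cutoff function}. The argument ${\rm Id}-\mathring{R}_0/\rho_j$ has $C^L_{t,x}$ norm bounded in terms of $\mathring{R}_0,\delta,L$ and stays (on ${\rm supp}\,\chi_j$) in a compact subset of $B_{\varepsilon_0}({\rm Id})$ on which $\gamma_\xi$ is smooth, so the composition estimate $\|f\circ g\|_{C^L}\lesssim\|f\|_{C^L}(\|g\|_{C^1}^L+\|g\|_{C^L})$ applies. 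The scalar $\rho_j$ is constant for $j\ge1$; for $j=0$, $\rho_0(t)$ is smooth, bounded above by the previous step and below by $\tfrac{\delta}{8}e(t)|{\rm T}^2|^{-1}\ge c'\delta>0$, with all $t$-derivatives bounded in terms of $\mathring{R}_0,\delta$ and (the fixed) $e$ via the smoothness of $e$, of $t\mapsto\int_{{\rm T}^2}\chi_0^2$, and of $t\mapsto\int_{{\rm T}^2}|v_0|^2$, so $\sqrt{\rho_0(t)}$ inherits these bounds. Multiplying the factor estimates then gives $\|a_{(\xi,j)}\|_{C^L_{t,x}}\le C(\mathring{R}_0,\delta,L)$, uniformly in $0\le j\le j_{max}$.
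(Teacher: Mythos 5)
Your proof follows essentially the same route as the paper --- factor $a_{(\xi,j)}=\sqrt{\rho_j}\,\chi_j\,\gamma_\xi\big({\rm Id}-\mathring{R}_0/\rho_j\big)$ and bound each factor using the cutoff lemma, the composition rule, and the confinement of the argument of $\gamma_\xi$ to $B_{\varepsilon_0}({\rm Id})$ --- but is substantially more careful, since the paper's own proof simply declares both estimates ``obvious'' and records only the one-line $C^L$ computation. You also correctly flag that the displayed inequality $\sqrt{\rho_0}\le\sqrt{\delta}$ does not literally follow from \eqref{d:definition of zero amplitude} and \eqref{e:energy difference estimate 1}; one gets $\rho_0(t)\le C(e)\,\delta$ after establishing the uniform lower bound on $\int_{{\rm T}^2}\chi_0^2$ via your Chebyshev argument, a small imprecision in the lemma's statement for $j=0$ that the paper leaves unaddressed and that is harmless because $e$ is fixed and the constant is absorbed wherever the lemma is subsequently invoked.
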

\begin{proof}
Recall that
\beno
a_{(\xi,j)}(t,x)=\sqrt{\rho_j}\chi_j(t,x)\gamma_{\xi}\Big({\rm Id}-\frac{\mathring{R}_0}{\rho_j}\Big).
\eeno
By Lemma \ref{e:$L^2$ and derivative estimate on cutoff function}, the estimate on $\|a_{(\xi,j)}\|_{L^\infty}$ and $\|a_{(\xi,j)}\|_{C^L_{t,x}}$ is obvious. In fact, noticing (\ref{e:estimate on number}), we have
\beno
\|a_{(\xi,j)}\|_{C^L_{t,x}}&=&\sqrt{\rho_j}\Big\|\chi_j\gamma_{\xi}\Big({\rm Id}-\frac{\mathring{R}_0}{\rho_j}\Big)\Big\|_{C^L_{t,x}}\\
&\leq& \sqrt{\rho_j}\|\chi_j\|_{C^L_{t,x}}\Big\|\gamma_{\xi}\Big({\rm Id}-\frac{\mathring{R}_0}{\rho_j}\Big)\Big\|_{C^L_{t,x}}\\
&\leq& C(\mathring{R}_0, \delta, L)\sqrt{\delta}2^{j_{max}}\leq C(\mathring{R}_0, \delta, L).
\eeno
\end{proof}

\begin{proposition}[Estimate on the perturbation]\label{e:estimate on the pertirbation}
For the velocity perturbation, we have the following bound: for every $t\in [0,1]$\\
1. $L^2$ estimate:
\beno
&&\|w^{p}_{1}(t)\|_{L^2}\leq \frac{M \sqrt{\delta}}{16}+ C(\mathring{R}_0,\delta)(\lambda_1 \sigma_1)^{-\frac12},\\
&&\|w^{c}_{1}(t)\|_{L^2}\leq  C(\mathring{R}_0,\delta)\sigma_1 r_1,\\[3pt]
 &&\|w^{t}_{1}(t)\|_{L^2}\leq  C(\mathring{R}_0,\delta)r_1\mu^{-1}_1,
\eeno
where $M$ is a universal constant.\\
2. $L^p$ estimate: for $p> 1$, there holds
\beno
&&\|w^{p}_{1}(t)\|_{L^p}\leq C(\mathring{R}_0, \delta, p)r_1^{1-\frac{2}{p}}, \\
 &&\|w^{c}_{1}(t)\|_{L^p}\leq C(\mathring{R}_0, \delta, p)\sigma_1 r_1^{2-\frac{2}{p}}, \\
 &&\|w^{t}_{1}(t)\|_{L^p}\leq C(\mathring{R}_0, \delta, p)  r_1^{2-\frac{2}{p}}\mu_1^{-1}.
\eeno
3. $W^{1,p}$ estimate: for $p> 1$, there holds
\beno
\|w^{p}_{1}(t)\|_{W^{1,p}}&\leq & C(\mathring{R}_0, \delta, p)\lambda_1 r_1^{1-\frac{2}{p}},\\ \|w^{c}_{1}(t)\|_{W^{1,p}}&\leq & C(\mathring{R}_0, \delta, p)\lambda_1\sigma_1 r_1^{2-\frac{2}{p}},\\
\|w^{t}_{1}(t)\|_{W^{1,p}}&\leq&  C(\mathring{R}_0, \delta, p)\mu_1^{-1}\lambda_1\sigma_1 r_1^{3-\frac{2}{p}}.
\eeno
In particular, we have
\beno
\|w^{p}_{1}(t)\|_{C^L}&\leq & C(\mathring{R}_0, \delta, L)\lambda_1^L r_1,\\
 \|w^{c}_{1}(t)\|_{C^L}&\leq & C(\mathring{R}_0, \delta,L)\lambda_1^L\sigma_1 r_1^2,\\
\|w^{t}_{1}(t)\|_{C^L}&\leq&  C(\mathring{R}_0, \delta, L)\mu_1^{-1}(\lambda_1\sigma_1 r_1)^L r_1^2.
\eeno
4. Time derivative estimate: for $p> 1$, there holds
\beno
\|\partial_tw^{p}_{1}(t)\|_{L^p}&\leq& C(\mathring{R}_0, \delta, p)\lambda_1\sigma_1 \mu_1 r_1^{2-\frac{2}{p}}.
\eeno
\end{proposition}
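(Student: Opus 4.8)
The plan is to treat each of the three pieces $w^p_1, w^c_1, w^t_1$ separately, and for each one to combine three ingredients already prepared in the paper: the amplitude bounds of Lemma \ref{e:estimate on the amplitde} ($\|a_{(\xi,j)}\|_{L^\infty}\le 2^j\sqrt\delta$ and $\|a_{(\xi,j)}\|_{C^L}\le C(\mathring R_0,\delta,L)$), the building-block estimates of Proposition \ref{e:estimate on building blocks} (in particular $\|\eta_{(\xi)}\|_{L^p}\le C_p r_1^{1-2/p}$, together with the derivative gains $\lambda_1$ for $W_{(\xi)}$ and $\lambda_1\sigma_1 r_1$ for $\eta_{(\xi)}$ and the extra factor $\lambda_1\sigma_1\mu_1 r_1$ per time derivative), and the improved Hölder inequality Lemma \ref{e:improved $L^p$ inequality} applied with the slow function $a_{(\xi,j)}$ and the fast function $\eta_{(\xi)}$ at frequency $\lambda_1\sigma_1 r_1 N$. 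Since the sum over $j$ runs only up to $j_{max}=j_{max}(\mathring R_0,\delta)$ and over $\xi$ is finite, all the sums cost only a constant $C(\mathring R_0,\delta)$, so it suffices to estimate a single summand.

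For the $L^p$ bounds ($p>1$) I would first note $|W_{(\xi)}|=1$ pointwise, so $\|a_{(\xi,j)}\eta_{(\xi)}W_{(\xi)}\|_{L^p}\le \|a_{(\xi,j)}\|_{L^\infty}\|\eta_{(\xi)}\|_{L^p}\le C(\mathring R_0,\delta,p)\,r_1^{1-2/p}$, giving the $w^p_1$ estimate. For $w^c_1$ the extra $\nabla^\bot$ falls either on $a_{(\xi,j)}$ (cost $C$) or on $\eta_{(\xi)}$ (cost $\lambda_1\sigma_1 r_1$), and the division by $\lambda_1$ in the definition turns this into a net factor $\sigma_1 r_1$, so $\|w^c_1\|_{L^p}\le C(\mathring R_0,\delta,p)\sigma_1 r_1^{2-2/p}$; the $W^{1,p}$ and $C^L$ bounds for $w^p_1,w^c_1$ follow by counting how each further derivative lands (each costs at most $\lambda_1$ on $W_{(\xi)}$, $\lambda_1\sigma_1 r_1$ on $\eta_{(\xi)}$, $C$ on $a_{(\xi,j)}$), using Bernstein via the frequency-localization Lemma \ref{e:frequency support} to pass from $W^{1,p}$ to $C^L$. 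For $w^t_1$ I would use $|\xi|=1$, $|\eta^2_{(\xi)}|$-type bounds $\|\eta^2_{(\xi)}\|_{L^p}=\|\eta_{(\xi)}\|_{L^{2p}}^2\le C r_1^{2-2/p}$, and the boundedness of $P_HP_{\neq0}$ on $L^p$ for $1<p<\infty$ (Calderón–Zygmund), together with the prefactor $\mu_1^{-1}$, to get $\|w^t_1\|_{L^p}\le C(\mathring R_0,\delta,p)r_1^{2-2/p}\mu_1^{-1}$; the $W^{1,p}$ and $C^L$ versions again come from derivative counting plus Bernstein, and the time-derivative estimate for $w^p_1$ comes from $\partial_t$ hitting either $\chi_j$ (cost $C$), or $\eta_{(\xi)}$ (cost $\lambda_1\sigma_1\mu_1 r_1$), the latter dominating.

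The one genuinely nontrivial point — and the main obstacle — is the first $L^2$ estimate, where the naive bound $\|w^p_1\|_{L^2}\le\|a_{(\xi,j)}\|_{L^\infty}\|\eta_{(\xi)}\|_{L^2}$ only gives $O(\sqrt{\rho_{j_{max}}})$, not the sharp constant $M\sqrt\delta/16$. Here I would instead expand $\|w^p_1\|_{L^2}^2$ and exploit orthogonality: by the frequency support of $\textbf{W}_{(\xi)}\otimes\textbf{W}_{(\xi')}$ in Lemma \ref{e:frequency support}, the cross terms with $\xi+\xi'\neq 0$ are supported away from frequency $0$, so after applying Lemma \ref{e:mean value estimate} (mean-value estimate for the product of the slow function $a_{(\xi,j)}a_{(\xi',j')}$-type amplitude with the fast kernel) their contribution is $O((\lambda_1\sigma_1)^{-1})$ times $C(\mathring R_0,\delta)$; the diagonal terms $\xi'=-\xi$ contribute $\fint a_{(\xi,j)}^2\eta_{(\xi)}^2|\xi|^2$, and using $\fint\eta_{(\xi)}^2\,dx=1$ from \eqref{e:property about Dirichlet kernel} (again up to a mean-value error controlled by Lemma \ref{e:improved $L^p$ inequality}), this equals $\sum_j\sum_{\xi}\int a_{(\xi,j)}^2$ up to $O((\lambda_1\sigma_1)^{-1/2})$ errors. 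Then I invoke the Geometric Lemma identity \eqref{e:identity in Geomertic constant}, $\sum_{\xi\in\Lambda_0^+}\gamma_\xi^2(\mathrm{Id}-\mathring R_0/\rho_j)=2$, together with $\sum_j\chi_j^2=1$ from \eqref{e:basic property of cutoff function}, the definition $\rho_j=4^j\delta$, and the defining formula \eqref{d:definition of zero amplitude} for $\rho_0(t)$; this collapses $\sum_j\sum_\xi\int a_{(\xi,j)}^2\,dx$ exactly to $2\big(e(t)(1-\delta/2)-\int|v_0|^2\big)$ plus $2\sum_{j\ge1}\rho_j\int\chi_j^2$, which by the $\ell^1$ summability of $\rho_j\int\chi_j^2$ (using the rapid decay forced by $\chi_j\neq0\Rightarrow|\mathring R_0|\gtrsim 4^j\varepsilon_0\delta$ and the $L^1$ smallness of $\mathring R_0$) is bounded by $C\delta e(t)\le C\delta e(t)$, hence $\|w^p_1\|_{L^2}^2\le \tfrac{M^2}{256}\delta + C(\mathring R_0,\delta)(\lambda_1\sigma_1)^{-1/2}$ for a universal $M$ — this is precisely where the universal constant $M$ of Proposition \ref{p:iterative proposition} is pinned down. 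Finally the $L^2$ bounds for $w^c_1,w^t_1$ follow by the cheap argument: $\|w^c_1\|_{L^2}\le C(\mathring R_0,\delta)\sigma_1 r_1$ and $\|w^t_1\|_{L^2}\le C(\mathring R_0,\delta)r_1\mu_1^{-1}$ directly from Proposition \ref{e:estimate on building blocks} and $L^\infty$ control of the amplitudes, with no orthogonality needed.
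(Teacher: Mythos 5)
Your sketch is essentially correct and follows the same basic strategy as the paper for every estimate except the $L^2$ bound on $w^p_1$, where you and the paper take genuinely different routes that are worth contrasting. The paper's proof of $\|w^p_1\|_{L^2}\leq \tfrac{M\sqrt\delta}{16}+C(\mathring R_0,\delta)(\lambda_1\sigma_1)^{-1/2}$ is deliberately crude: it bounds $|W_{(\xi)}+W_{(-\xi)}|^2\leq 4$, applies the improved H\"older inequality (Lemma~\ref{e:improved $L^p$ inequality}) to $\int a_{(\xi,j)}^2\eta_{(\xi)}^2$ to split off a main term $\|a_{(\xi,j)}\|_{L^2}^2\|\eta_{(\xi)}\|_{L^2}^2\lesssim\rho_j\int\chi_j^2$, sums the $j\geq 1$ contributions via the telescoping $\ell^1$ argument $\sum_{j\geq 1}4^j\delta\int\chi_j^2\leq 4\varepsilon_0^{-1}\|\mathring R_0\|_{L^1}\leq\delta/500$, and for $j=0$ just invokes the defining formula for $\rho_0$ to get $\leq C_0\delta$ for a universal $C_0$. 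It never expands cross terms, never touches orthogonality, and never invokes the Geometric Lemma identity \eqref{e:identity in Geomertic constant} here — those refinements are postponed to Section~8, where the exact value of $\int|w_1^p|^2$ is needed to close the energy constraint \eqref{e:energy estimate new}. Your plan imports that Section~8 machinery early (cross terms killed by the mean-value Lemma~\ref{e:mean value estimate} and the frequency separation of Lemma~\ref{e:frequency support}, diagonal terms collapsed via $\sum_\xi\gamma_\xi^2=2$ and the $\rho_0$ definition). That is a valid and in fact sharper route, but it is more than what this proposition needs, since only an upper bound of order $\sqrt\delta$ up to a universal constant is required. Two small slips: with $\rho_0=\tfrac12(\int\chi_0^2)^{-1}[e(t)(1-\delta/2)-\int|v_0|^2]$ you get $2\rho_0\int\chi_0^2=e(t)(1-\delta/2)-\int|v_0|^2$, not twice that; and the cross-term decay from frequency localization near $c_0\lambda_1$ is $O(\lambda_1^{-1})$, which is better than (and subsumed by) the $O((\lambda_1\sigma_1)^{-1})$ you quote — harmless, since both are absorbed. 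For the $L^p$, $W^{1,p}$, $C^L$, and $\partial_t$ bounds your derivative-counting scheme is exactly the paper's; the only cosmetic difference is that you use $\|a\|_{L^\infty}\|\eta\|_{L^p}$ where the paper uses the improved H\"older $\|a\|_{L^p}\|\eta\|_{L^p}+C(\lambda_1\sigma_1)^{-1/p}\|a\|_{C^1}\|\eta\|_{L^p}$, and since the constant is allowed to depend on $\mathring R_0,\delta$ (hence on $j_{max}$), both give the stated bounds.
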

\begin{proof}
{\bf Step 1: $L^2$ estimate.}
Recall the definition (\ref{e:principle part}) of $w^p_{1}$, using the support property (\ref{e:basic property of cutoff function}) of cutoff function $\chi_i$,  Proposition \ref{e:estimate on building blocks}, Lemma \ref{e:estimate on the amplitde} and Lemma \ref{e:improved $L^p$ inequality}, we have
\beno
 &&\frac12 \sum_{j} \sum_{\xi \in \Lambda_{(j)}^+} \int_{{\rm T}^2}a^2_{(\xi,j)}\eta^2_{(\xi)}\Big|W_{(\xi)}+W_{(-\xi)}\Big|^2dx\\
&\leq& 2\sum_{j} \sum_{\xi \in \Lambda_{(j)}^+} \int_{{\rm T}^2}a^2_{(\xi,j)}\eta^2_{(\xi)}dx\\
&\leq& 2\sum_{j} \sum_{\xi \in \Lambda_{(j)}^+} \Big(\|a_{(\xi,j)}\|^2_{L^2}\|\eta_{(\xi)}\|^2_{L^2}+\frac{C}{\lambda_1 \sigma_1}\|a_{(\xi,j)}\|^2_{C^1}\|\eta_{(\xi)}\|^2_{L^2}\Big)\\
&\leq& 6\sum_{j} \Big(4^{j} \delta\int_{{\rm T}^2}\chi_j^2(t,x)dx+\frac{C(\mathring{R}_0,\delta)}{\lambda_1 \sigma_1}\Big).
\eeno
Moreover, for any $t\in [0,1]$, there hold
\beno
\sum_{j \geq 1} 4^{j} \delta\int_{{\rm T}^2}\chi_j^2(t,x)dx
&\leq & \delta \sum_{j \geq 1} 4^{j}\int_{\{x\in {\rm T}^2:4^{j-1}\varepsilon_0\delta\leq|\mathring{R}_0(t,x)|\leq 4^{j+1}\varepsilon_0\delta\}}dx\\
&\leq& \delta \sum_{j \geq 1}  4^{j}\int_{\{x\in {\rm T}^2:4^{j-1}\varepsilon_0\delta\leq|\mathring{R}_0(t,x)|\leq 4^{j+1}\varepsilon_0\delta\}}\frac{|\mathring{R}_0(t,x)|}{4^{j-1}\varepsilon_0\delta}dx\\
&\leq& 4\varepsilon_0^{-1} \sum_{j \geq 1}  \int_{\{x\in {\rm T}^2:4^{j-1}\delta\leq|\mathring{R}_0|\leq 4^{j+1}\delta\}}|\mathring{R}(t,x)|dx\\
&\leq& 4\varepsilon_0^{-1}\int_{{\rm T}^2}|\mathring{R}_0(t,x)|dx \leq \frac{\delta}{500}.
\eeno
Thus, we obtain
\ben\label{e:energy of first odd part}
 &&\frac12 \sum_{j \geq 1} \sum_{\xi \in \Lambda_{(j)}} \int_{{\rm T}^2}a^2_{(\xi,j)}\eta^2_{(\xi)}\Big|W_{(\xi)}+W_{(-\xi)}\Big|^2dx\leq \frac{\delta}{80}+\frac{C(\mathring{R}_0,\delta)}{\lambda_1 \sigma_1}.
\een
Finally, due to the definition of $\rho_0$ and estimate (\ref{e:energy difference estimate 1}), we deduce that
\beno
\frac12\int_{{\rm T}^2}\rho_0\chi_0^2\gamma_{\xi}^2\Big({\rm Id}-\frac{\mathring{R}}{\rho_0}\Big)\eta^2_{(\xi)}\Big|W_{(\xi)}+W_{(-\xi)}\Big|^2dx\leq C_0\delta,
\eeno
where $C_0$ is a universal constant.
Taking $M$ to be a universal constant, we obtain
\beno
\|w^{p}_{1}\|_{L^2}(t)\leq \frac{M \sqrt{\delta}}{16}+ C(\mathring{R},\delta)\frac{1}{\sqrt{\lambda_1 \sigma_1}}.
\eeno

From the definition (\ref{e:corretor part}) of $w^{c}_{1}$ , Proposition \ref{e:estimate on building blocks} and lemma \ref{e:estimate on the amplitde}, and noticing the fact (\ref{e:estimate on number})(we use this fact frequently below), we deduce that
\beno
\|w^{c}_{1}(t)\|_{L^2}&\leq& \frac{1}{\lambda_1}  \sum_{j} \sum_{\xi \in \Lambda_{(j)}} \big(\|a_{(\xi,j)}\|_{C^1_{x}}\|\eta_{(\xi)}\|_{L^2}+\|a_{(\xi,j)}\|_{L^\infty_{x}}\|\nabla\eta_{(\xi)}\|_{L^2_x}\big)\\
&\leq &\frac{1}{\lambda_1} \sum_{j} \sum_{\xi \in \Lambda_{(j)}} \big(C(\mathring{R}_0,\delta)+2^j \sqrt{\delta}\lambda_1\sigma_1 r_1\big)\\
&\leq& C(\mathring{R}_0,\delta)\sigma_1 r_1.
\eeno

By (\ref{e:defination of time corrector}), Proposition \ref{e:estimate on building blocks} and Lemma \ref{e:estimate on the amplitde}, we obtain
\beno
\|w^{t}_{1}(t)\|_{L^2}&\leq& \frac{1}{\mu_1}\sum_{j} \sum_{\xi \in \Lambda_{(j)}} \|a^2_{(\xi,j)}\eta^2_{(\xi)}\|_{L^2}\\
&\leq& \frac{1}{\mu_1} \sum_{j} \sum_{\xi \in \Lambda_{(j)}} \|a_{(\xi,j)}\|^2_{L^\infty}\|\eta_{(\xi)}\|^2_{L^4}\\
&\leq& \frac{1}{\mu_1}\sum_{j} \sum_{\xi \in \Lambda_{(j)}} 4^jr_1\leq  C(\mathring{R}_0,\delta)\frac{r_1}{\mu_1}.
\eeno
{\bf Step 2: $L^p$ estimate.}
By Lemma \ref{e:improved $L^p$ inequality}, proposition \ref{e:estimate on building blocks} and Lemma \ref{e:estimate on the amplitde}, we obtain
\beno
\|w^{p}_{1}(t)\|_{L^p}&\leq& \sum_{j} \sum_{\xi \in \Lambda_{(j)}}\Big( \|a_{(\xi,j)}(t)\|_{L^p}\|\eta_{(\xi)}(t)\|_{L^p}+C_p(\lambda_1\sigma_1)^{-\frac{1}{p}}
\|a_{(\xi,j)}(t)\|_{C^1_{x}}\|\eta_{(\xi)}(t)\|_{L^p}\Big)\\
&\leq & C(\mathring{R}_0, \delta, p)\sum_{j} \sum_{\xi \in \Lambda_{(j)}}2^j r_1^{1-\frac {2}{p}}\leq C(\mathring{R}_0, \delta, p)r_1^{1-\frac{2}{p}}.
\eeno
For $\|w^{c}_{1}(t)\|_{L^p}$ and $\|w^t_{1}\|_{L^p}$, there hold
\beno
\|w^{c}_{1}(t)\|_{L^p}
&&\leq  \frac{C}{\lambda_1}\sum_{j} \sum_{\xi \in \Lambda_{(j)}}\big(\|a_{(\xi,j)}\|_{C^1_{x}}\|\eta_{(\xi)}\|_{L^p}+\|a_{(\xi,j)}\|_{L^\infty_{x}}\|\nabla\eta_{(\xi)}\|_{L^p_x}\big)\\
&&\leq C(\mathring{R}_0, \delta, p) \sigma_1 r_1^{2-\frac{2}{p}}
\eeno
and
\beno
\|w^{t}_{1}(t)\|_{L^p}
&\leq & \frac{C}{\mu_1}\sum_{j} \sum_{\xi \in \Lambda_{(j)}}\|a_{(\xi,j)}\|^2_{L^\infty_{x}}\|\eta_{(\xi)}\|^2_{L^{2p}}
\\
&\leq &C(\mathring{R}_0, \delta, p)  r_1^{2-\frac{2}{p}}\mu_1^{-1}.
\eeno
{\bf Step 3: $W^{1,p}$ estimate.}
Recalling (\ref{e:principle part}), a direct computation gives that
\beno
\partial_l w^{p}_{1}&=&\frac{1}{\sqrt{2}}\sum_{j} \sum_{\xi \in \Lambda_{(j)}}\Big( \partial_l\big(a_{(\xi,j)} \eta_{(\xi)}\big)\big(W_{(\xi)}+W_{(-\xi)}\big)+a_{(\xi,j)} \eta_{(\xi)}\big(\partial_lW_{(\xi)}+\partial_lW_{(-\xi)}\big)\Big),
\eeno
thus by Lemma \ref{e:improved $L^p$ inequality}, Proposition \ref{e:estimate on building blocks} and Lemma \ref{e:estimate on the amplitde}, we have
\beno
\|\nabla w^{p}_{1} \|_{L^p}&\leq& \sum_{j} \sum_{\xi \in \Lambda_{(j)}}\Big( \|\nabla a_{(\xi,j)}\|_{L^p}\|\eta_{(\xi)}\|_{L^p}+\| a_{(\xi,j)}\|_{L^p}\|\nabla \eta_{(\xi)}\|_{L^p}\\
&&\quad +C_p(\lambda_1\sigma_1)^{-\frac{1}{p}}
\|a_{(\xi,j)}\|_{C^2_{x}}\|\eta_{(\xi)}\|_{W^{1,p}}\Big)\\
&&+\lambda_1\sum_{j} \sum_{\xi \in \Lambda_{(j)}}\Big( \| a_{(\xi,j)}\|_{L^p}\|\eta_{(\xi)}\|_{L^p}+C_p(\lambda_1\sigma_1)^{-\frac{1}{p}}
\|a_{(\xi)}\|_{C^1_{x}}\|\eta_{(\xi)}\|_{L^p}\Big)\\
&\leq & C(\mathring{R}_0, \delta, p)\lambda_1 r_1^{1-\frac{2}{p}},
\eeno
and
\beno
\|\nabla w^{p}_{1}\|_{L^\infty}\leq  C(\mathring{R}_0, \delta)\lambda_1 r_1.
\eeno
Recalling (\ref{e:corretor part}), there holds
\beno
\partial_l w^{c}_{1}:&=&-\frac{1}{\sqrt{2}}\sum_{j} \sum_{\xi \in \Lambda_{(j)}}\nabla^{\bot}\partial_l \big(a_{(\xi,j)} \eta_{(\xi)}\big)\frac{e^{i\lambda_1 \xi^{\bot}\cdot x}+e^{-i\lambda_1 \xi^{\bot}\cdot x}}{\lambda_1 }\nonumber\\
&&-\frac{1}{\sqrt{2}}\sum_{j} \sum_{\xi \in \Lambda_{(j)}}\nabla^{\bot} \big(a_{(\xi,j)} \eta_{(\xi)}\big)\partial_l\Big(\frac{e^{i\lambda_1 \xi^{\bot}\cdot x}+e^{-i\lambda_1 \xi^{\bot}\cdot x}}{\lambda_1 }\Big).
\eeno
Thus, by Lemma \ref{e:improved $L^p$ inequality}, Proposition \ref{e:estimate on building blocks} and Lemma \ref{e:estimate on the amplitde}, we get
\beno
\|\nabla w^{c}_{1}\|_{L^p}\leq  C(\mathring{R}_0, \delta, p)\lambda_1\sigma_1 r_1^{2-\frac{2}{p}}.
\eeno
Recalling (\ref{e:defination of time corrector}), we have
\beno
\partial_lw_{1}^{t}:=-\frac{1}{\mu_1}\sum_{j} \sum_{\xi \in \Lambda_{(j)}^+}P_H P_{\neq 0}\partial_l\big(a^2_{(\xi,j)}\eta^2_{(\xi)}\xi\big).
\eeno
Thus, by Lemma \ref{e:improved $L^p$ inequality}, Proposition \ref{e:estimate on building blocks} and Lemma \ref{e:estimate on the amplitde}, we get
\beno
\|\nabla w^{t}_{1}\|_{L^p}\leq  C(\mathring{R}_0, \delta, p)\mu_1^{-1}\lambda_1\sigma_1 r_1^{3-\frac{2}{p}}.
\eeno
The same argument gives that
\beno
\|w^{p}_{1}(t)\|_{C^L}\leq  C(\mathring{R}_0, \delta)\lambda^L r_1.
\eeno
The estimate for $\|w^{c}_{1}(t)\|_{C^L}, \|w^{t}_{1}(t)\|_{C^L}$ is similar, and we omit the detail here. \\

{\bf Step 4: Time derivative estimate.}
A direct computation gives that
\beno
\partial_t w^{p}_{1}&=&\frac{1}{\sqrt{2}}\sum_{j} \sum_{\xi \in \Lambda_{(j)}^+} \partial_t\big(a_{(\xi,j)} \eta_{(\xi)}\big)\big(W_{(\xi)}+W_{(-\xi)}\big),
\eeno
thus by Lemma \ref{e:improved $L^p$ inequality}, Proposition \ref{e:estimate on building blocks} and Lemma \ref{e:estimate on the amplitde}, we deduce that
\beno
\|\partial_t w^{p}_{1} \|_{L^p}&\leq& \sum_{j} \sum_{\xi \in \Lambda_{(j)}} \Big( \|\partial_t a_{(\xi,j)}\|_{L^p}\|\eta_{(\xi)}\|_{L^p}+\| a_{(\xi,j)}\|_{L^p}\|\partial_t \eta_{(\xi)}\|_{L^p}\\
&&\quad +C_p(\lambda_1\sigma_1)^{-\frac{1}{p}}
\|a_{(\xi,j)}\|_{C^2_{t,x}}\|\partial_t\eta_{(\xi)}\|_{L^p}\Big)
\leq  C(\mathring{R}_0, \delta, p)\lambda_1\sigma_1 \mu_1 r_1^{2-\frac{2}{p}}.
\eeno
\end{proof}
\begin{Corollary}\label{e:estimate on full perturbation}
For all $1< p< 2$, by taking $\lambda_1$ large enough, we have
\beno
\|w_{1}\|_{L^2}\leq \frac{M \sqrt{\delta}}{10}, \quad \quad \|w_{1}\|_{L^p}\leq C(\mathring{R}_0, \delta) r_1^{1-\frac{2}{p}}.
\eeno
\end{Corollary}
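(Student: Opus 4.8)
The plan is to combine the three families of bounds collected in Proposition~\ref{e:estimate on the pertirbation} with the scaling relations (\ref{r:relationship of papameter}), and to observe that every contribution beyond the leading term $\frac{M\sqrt\delta}{16}$ carries a strictly negative power of $\lambda_1$, so it becomes negligible once $\lambda_1$ is taken large (depending only on $\mathring R_0$, $\delta$, $p$ and the universal constant $M$).

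First I would treat the $L^2$ bound. Writing $w_1=w^p_1+w^c_1+w^t_1$ and summing the three $L^2$ estimates of Proposition~\ref{e:estimate on the pertirbation} gives, for every $t\in[0,1]$,
\[
\|w_1(t)\|_{L^2}\le \frac{M\sqrt\delta}{16}+C(\mathring R_0,\delta)\Big((\lambda_1\sigma_1)^{-\frac12}+\sigma_1 r_1+r_1\mu_1^{-1}\Big).
\]
By (\ref{r:relationship of papameter}) one has $\lambda_1\sigma_1=\lambda_1^{\frac{1-\alpha}{2}}$, hence $(\lambda_1\sigma_1)^{-\frac12}=\lambda_1^{-\frac{1-\alpha}{4}}$, while $\sigma_1 r_1\lesssim\lambda_1^{\frac{\alpha-1}{2}}$ and $r_1\mu_1^{-1}\lesssim\lambda_1^{\frac{\alpha-1}{4}}$; since $\alpha<1$ each of these tends to $0$ as $\lambda_1\to\infty$. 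Therefore I would fix $\lambda_1$ so large that the error term is at most $\frac{3M\sqrt\delta}{80}$, and then $\|w_1\|_{L^2}\le\frac{M\sqrt\delta}{16}+\frac{3M\sqrt\delta}{80}=\frac{M\sqrt\delta}{10}$.

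For the $L^p$ bound with $1<p<2$ I would sum the three $L^p$ estimates of Proposition~\ref{e:estimate on the pertirbation} and factor out $r_1^{1-\frac2p}$:
\[
\|w_1(t)\|_{L^p}\le C(\mathring R_0,\delta,p)\,r_1^{1-\frac2p}\big(1+\sigma_1 r_1+r_1\mu_1^{-1}\big).
\]
By the same computation $\sigma_1 r_1+r_1\mu_1^{-1}\le1$ once $\lambda_1$ is large, so the parenthesis is bounded by a universal constant and the claimed inequality follows after renaming $C(\mathring R_0,\delta,p)$.

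There is no genuine difficulty here; the argument is essentially bookkeeping. The points that need care are: (i) the implicit constants in Proposition~\ref{e:estimate on the pertirbation} depend on $\mathring R_0,\delta,p$ but are independent of $\lambda_1$, so the largeness of $\lambda_1$ may legitimately be chosen after $\mathring R_0$ and $\delta$ are fixed; (ii) the exponents $\tfrac{1-\alpha}{2}$ and $\tfrac{1-\alpha}{4}$ are strictly positive, which is exactly where the hypothesis $\alpha<1$ enters (at $\alpha=1$ the correctors would only be $O(1)$, not small); and (iii) one must let $\lambda_1\to\infty$ through values for which $\lambda_1\sigma_1=\lambda_1^{(1-\alpha)/2}\in{\rm N}$ and $r_1=[\lambda_1^{\alpha}]$, $\mu_1=[\lambda_1^{(3\alpha+1)/4}]$ are the indicated integers, as demanded by (\ref{r:relationship of papameter}), so that the intermittent plane waves are well defined.
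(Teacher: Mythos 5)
Your proof is correct and follows essentially the same approach as the paper: sum the three $L^2$ (resp.\ $L^p$) bounds from Proposition~\ref{e:estimate on the pertirbation}, then use the parameter relations~(\ref{r:relationship of papameter}) to see that $(\lambda_1\sigma_1)^{-1/2}$, $\sigma_1 r_1$, and $r_1\mu_1^{-1}$ all carry strictly negative powers of $\lambda_1$ (precisely because $\alpha<1$) and so can be made as small as needed. The paper is terser, but your explicit exponent computations and the numerical check $\frac{M\sqrt\delta}{16}+\frac{3M\sqrt\delta}{80}=\frac{M\sqrt\delta}{10}$ are exactly the bookkeeping that the paper's ``taking $\lambda_1$ large enough'' implicitly relies on.
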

\begin{proof}
From the definition of $w_{1}$, by the $L^2$ estimate in Proposition \ref{e:estimate on the pertirbation}, we deduce that 
\beno
\|w_{1}\|_{L^2}\leq \frac{M\sqrt{\delta}}{16}+C(\mathring{R}_0, \delta)\big((\lambda_1\sigma_1)^{-\frac12}+ \sigma_1 r_1+ r_1 \mu_1^{-1}\big).
\eeno
Using the relationship of parameter (\ref{r:relationship of papameter}) and taking $\lambda_1$ large enough, we obtain
\beno
\|w_{1}\|_{L^2}\leq \frac{M\sqrt{\delta}}{10}.
\eeno
Similarly,  we get
\beno
\|w_{1}(t)\|_{L^p}\leq C(\mathring{R}_0, \delta, p)\Big(r_1^{1-\frac{2}{p}}+\sigma_1 r_1^{2-\frac{2}{p}}+ r_1^{2-\frac{2}{p}}\mu_1^{-1}\Big).
 \eeno
 Then, the parameter relationship gives
 \beno
\|w_{1}(t)\|_{L^p}\leq C(\mathring{R}_0, \delta, p)r_1^{1-\frac{2}{p}}.
 \eeno
\end{proof}


\section{Construction and Estimate on temperature perturbation}

After the construction of new velocity $v_{1}$, we construct new temperature $\theta_{1}$ as following.  Consider the transport-diffusion equation:
\begin{equation}
\begin{cases}
\partial_t \theta_{1}+ v_{1}\cdot\nabla  \theta_{1}-\triangle  \theta_{1}=0,\\[3pt]
\theta_{1}|_{t=0}=\theta^0,
\end{cases}
\end{equation}
where $\theta^0(x)$ is the function in Proposition \ref{p:iterative proposition}. From the standard theory, we know that there exists a unique solution $\theta_{1}\in C^\infty([0,1]\times {\rm T}^2)$ and it obeys the following estimates:
\beno
\|\theta_{1}\|_{L^\infty_{t,x}}\leq \|\theta^0\|_{L^\infty}, \quad \|\theta_{1}(t)\|^2_{L^2}+ 2\int_0^t \|\nabla \theta_{1}(s)\|^2_{L^2}ds=\|\theta^0\|^2_{L^2}
\eeno
and
\begin{equation}\label{e:transport difference equation}
\begin{cases}
\partial_t(\theta_{1}-\theta_0)+v_{1}\cdot\nabla (\theta_{1}-\theta_0)+(v_{1}-v_0)\cdot\nabla \theta_0-\triangle (\theta_{1}-\theta_0)=0,\\[3pt] (\theta_{1}-\theta_0)|_{t=0}=0.
\end{cases}
\end{equation}
Direct energy estimate gives that
\beno
\frac{d}{dt}\|\theta_{1}-\theta_0\|^2_{L^2}+2\|\nabla(\theta_{1}-\theta_0)\|^2_{L^2}&=&2\int_{{\rm T}^2}\theta_0(v_{1}-v_0)
\cdot\nabla(\theta_{1}-\theta_0)dx\\
&\leq &\|\nabla(\theta_{1}-\theta_0)\|^2_{L^2}+\parallel\theta_0\parallel_\infty \|v_{1}-v_0\|^2_{L^2}.
\eeno
which implies
\beno
\sup_{t\in [0,1]}\|\theta_{1}-\theta_0\|^2_{L^2}(t)+\int_0^1\|\nabla(\theta_{1}-\theta_0)\|^2_{L^2}(t)dt\leq \parallel\theta^0\parallel_\infty M^2\delta.
\eeno

\section{Reynold Stress: Construction and Estimate}

\subsection{Anti-divergence operator}
We first recall the anti-divergence operator:
\begin{Lemma}
There exists an operator $\mathcal{R}$ satisfying the following property:
\begin{itemize}
  \item For any $v\in C^\infty({\rm T}^2; R^2)$, $\mathcal{R}v(x)$ is a symmetric trace-free matrix for each $x\in {\rm T}^2$ and
         ${\rm div} \mathcal{R}v(x)= v(x)-\fint_{{\rm T}^2} v(x)dx.$
  \item The following estimates hold: $\||\nabla| \mathcal{R}\|_{L^p\rightarrow L^p}\leq C_p$,
$\|\mathcal{R}\|_{L^p\rightarrow L^p}\leq C_p, \quad \| \mathcal{R}\|_{C^0\rightarrow C^0}\leq C.$
\end{itemize}
\end{Lemma}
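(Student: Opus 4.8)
The plan is to write down an explicit formula for $\mathcal{R}$ in terms of the inverse Laplacian on the torus, verify the two pointwise algebraic properties by a direct computation, and then read off the three bounds from the mapping properties of fixed Fourier multipliers. Concretely, for $v\in C^\infty({\rm T}^2;R^2)$ let $u:=\Delta^{-1}\big(v-\fint_{{\rm T}^2}v\,dx\big)$ be the unique mean-zero solution of $\Delta u=v-\fint_{{\rm T}^2}v\,dx$ (so $\widehat u(k)=-|k|^{-2}\widehat v(k)$ for $k\neq0$ and $\widehat u(0)=0$, whence $u\in C^\infty$), and set
\[
\mathcal{R}v:=\nabla u+(\nabla u)^{T}-({\rm div}\,u)\,{\rm Id}.
\]
This is exactly the two-dimensional specialization of the De Lellis--Sz\'ekelyhidi anti-divergence operator \cite{CDL}.

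First I would verify the algebraic properties. Symmetry of $\mathcal{R}v$ is immediate. For the trace, $\operatorname{tr}\big(\nabla u+(\nabla u)^{T}\big)=2\,{\rm div}\,u$ while $\operatorname{tr}\big(({\rm div}\,u)\,{\rm Id}\big)=2\,{\rm div}\,u$ because the ambient dimension is $2$, so $\mathcal{R}v$ is trace-free; this is the one place the two-dimensional structure is really used. For the divergence, a direct computation gives for each index $i$
\[
\big({\rm div}\,\mathcal{R}v\big)_i=\sum_j\partial_j\big(\partial_iu_j+\partial_ju_i-\delta_{ij}\,{\rm div}\,u\big)=\partial_i({\rm div}\,u)+\Delta u_i-\partial_i({\rm div}\,u)=\Delta u_i,
\]
which equals $v_i-\fint_{{\rm T}^2}v\,dx$, i.e.\ the required identity ${\rm div}\,\mathcal{R}v=v-\fint_{{\rm T}^2}v\,dx$.

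Next I would turn to the three estimates, each of which reduces to a boundedness statement for a fixed multiplier operator acting on a component of $v$. The entries of $\mathcal{R}v$ are linear combinations of $\partial_i\Delta^{-1}v_j$. Applying $|\nabla|$ turns each $\partial_i\Delta^{-1}$ into the operator with Fourier symbol $-ik_i/|k|$ (on $k\neq0$), i.e.\ a constant multiple of a Riesz transform, which is bounded on $L^p({\rm T}^2)$ for $1<p<\infty$ by the Mikhlin--H\"ormander theorem together with the standard transference from $R^2$ to ${\rm T}^2$; hence $\||\nabla|\mathcal{R}\|_{L^p\to L^p}\le C_p$. For $\|\mathcal{R}\|_{L^p\to L^p}$ one notes that $\partial_i\Delta^{-1}$ has order $-1$, mapping $L^p$ into $W^{1,p}$ (again by $L^p$-boundedness of the Riesz transforms, after using the Poincar\'e inequality on mean-zero functions), and $W^{1,p}\hookrightarrow L^p$. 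Finally, for the $C^0\to C^0$ bound one uses that on ${\rm T}^2$ the operator $\partial_i\Delta^{-1}$ is convolution against a kernel that agrees near the origin with $c\,x_i/|x|^{2}$ up to a smooth remainder; since $|x_i/|x|^2|\le|x|^{-1}\in L^1_{\mathrm{loc}}(R^2)$, this periodic kernel is integrable on ${\rm T}^2$, so convolution with it maps $L^\infty$ to $L^\infty$, giving $\|\mathcal{R}\|_{C^0\to C^0}\le C$.

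The main (and essentially only) delicate point is this last estimate: a genuine zeroth-order singular integral would not be bounded on $C^0$, and what makes the argument work is that $\mathcal{R}$ is assembled from the smoothing operator $\nabla\Delta^{-1}$ of order $-1$, whose periodic kernel is only weakly (integrably) singular. One should be mildly careful in extracting the removed zero-frequency mode when passing from the whole-space kernel to the periodic one --- this is handled cleanly via the periodic fundamental solution of $\Delta$ --- but no new ideas are needed, and in any case all three inequalities are classical and may simply be cited from \cite{CDL}.
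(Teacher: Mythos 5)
Your construction of $\mathcal{R}$ is identical to the paper's (set $u=\Delta^{-1}(v-\fint v)$ and $\mathcal{R}v=\nabla u+(\nabla u)^T-(\operatorname{div}u)\,\mathrm{Id}$), and your verifications of symmetry, trace-freeness, the divergence identity, and the three boundedness estimates are all correct. You have simply supplied the details that the paper leaves implicit, so the two arguments coincide.
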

\begin{proof}
Let $u\in C_0^\infty({\rm T}^2)$ be a solution to
\beno
\triangle u=v-\fint_{{\rm T}^2}v(x)dx,
\eeno
where $C_0^\infty({\rm T}^2)=\{f\in C^\infty({\rm T}^2):\fint f(x)dx=0\}.$
Then set
\beno
\mathcal{R}v(x):= \nabla u+ (\nabla u)^T -({\rm div}u) {\rm Id}.
\eeno
Then $\mathcal{R}$ satisfies the above property.
\end{proof}

\subsection{Construction of new error $\mathring{R}_{1}$}
In this subsection, we define the new error $\mathring{R}_{1}$. We first compute the interaction {\bf $w^{p}_{1}\otimes w^{p}_{1}$} of principle perturbation. Recalling the definition (\ref{e:principle part}) of $w^{p}_{1}, $ we have
\beno
w^p_{1}\otimes w^p_{1} = T_{self} + T_{inter}, 
\eeno
where 
\beno
T_{self} &:=&\sum_{j} \sum_{\xi \in \Lambda_{(j)}^+} a^2_{(\xi,j)}\eta^2_{(\xi)}\Big( \xi\otimes\xi-\frac12 \xi\otimes\xi\big(e^{2i\lambda_1 \xi^{\bot}\cdot x}+e^{-2i\lambda_1 \xi^{\bot}\cdot x}\big)\Big)\\
&=&\sum_{j} \sum_{\xi \in \Lambda_{(j)}^+} a^2_{(\xi,j)}\xi\otimes\xi+\sum_{j} \sum_{\xi \in \Lambda_{(j)}^+}a^2_{(\xi,j)}\big(\eta^2_{(\xi)}-1\big)\xi\otimes\xi\\
&&-\frac12\sum_{j} \sum_{\xi \in \Lambda_{(j)}^+}a^2_{(\xi,j)}\eta^2_{(\xi)}\xi\otimes\xi\big(e^{2i\lambda_1 \xi^{\bot}\cdot x}+e^{-2i\lambda_1 \xi^{\bot}\cdot x}\big),
\eeno
and  $T_{inter} = w^p_{1}\otimes w^p_{1} - T_{self}$.
Recalling (\ref{e:amplitude definition}), we deduce
\beno
\sum_{j} \sum_{\xi \in \Lambda_{(j)}^+} a^2_{(\xi,j)}\xi\otimes\xi=\sum_{j} \sum_{\xi \in \Lambda_{(j)}^+} \rho_j\chi_j^2 \gamma^2_{\xi}\Big({\rm Id}-\frac{\mathring{R}_0}{\rho_j}\Big)\xi \otimes \xi.
\eeno
However, by Geometric Lemma \ref{l:geometric lemma},
\beno
\mathring{R}_0=\sum_{j}\chi_j^2 \mathring{R}_0
&=&\Big(\sum_{j}\rho_j\chi_j^2\Big) {\rm Id} -\sum_{j}\rho_j\chi_j^2 \Big({\rm Id}-\frac{\mathring{R}_0}{\rho_j}\Big)\\
&=&\Big(\sum_{j}\rho_j\chi_j^2 \Big){\rm Id} -\sum_{j}\rho_j\chi_j^2 \Big[\sum_{\xi \in \Lambda_{(j)}^+}\gamma_{\xi}^2\Big({\rm Id}-\frac{\mathring{R}_0}{\rho_j}\Big)\xi \otimes \xi \Big].
\eeno
Thus, there holds
\beno
\mathring{R}_0+\sum_{j} \sum_{\xi \in \Lambda_{(j)}^+}a^2_{(\xi,j)}\xi\otimes\xi=\Big(\sum_{j}\rho_j\chi_j^2\Big) {\rm Id}.
\eeno
Furthermore, by (\ref{e:transport structure}) and using the identity ${\rm div}f={\rm P}_{\neq 0}{\rm div}f$, there hold
\beno
&&{\rm div}\Big(\sum_{j} \sum_{\xi \in \Lambda_{(j)}^+}a^2_{(\xi,j)}\big(\eta^2_{(\xi)}-1\big)\xi\otimes\xi\Big)\\
&=&{\rm P}_{\neq 0}\Big(\sum_{j} \sum_{\xi \in \Lambda_{(j)}^+}\big(\eta^2_{(\xi)}-1\big)\xi\otimes\xi \nabla\big(a^2_{(\xi,j)}\big)\Big)+{\rm P}_{\neq 0}\Big(\sum_{j} \sum_{\xi \in \Lambda_{(j)}^+}a^2_{(\xi,j)}\xi\otimes\xi\nabla\big(\eta^2_{(\xi)}\big)\Big)\\
&=&{\rm P}_{\neq 0}\Big(\sum_{j} \sum_{\xi \in \Lambda_{(j)}^+}\big(\eta^2_{(\xi)}-1\big)\xi\otimes\xi \nabla\big(a^2_{(\xi,j)}\big)\Big)+{\rm P}_{\neq 0}\Big(\sum_{j} \sum_{\xi \in \Lambda_{(j)}^+}a^2_{(\xi,j)}\frac{\xi}{\mu_1}\partial_t\big(\eta^2_{(\xi)}\big)\Big)\\
&=&{\rm P}_{\neq 0}\Big(\sum_{j} \sum_{\xi \in \Lambda_{(j)}^+}\big(\eta^2_{(\xi)}-1\big)\xi\otimes\xi \nabla\big(a^2_{(\xi,j)}\big)\Big)+{\rm P}_{\neq 0}\Big(\frac{1}{\mu_1}\sum_{j} \sum_{\xi \in \Lambda_{(j)}^+}\partial_t\big(a^2_{(\xi,j)}\eta^2_{(\xi)}\xi\big)\Big)\\
&&-{\rm P}_{\neq 0}\Big(\frac{1}{\mu_1}\sum_{j} \sum_{\xi \in \Lambda_{(j)}^+}\partial_t\big(a^2_{(\xi,j)}\big)\eta^2_{(\xi)}\xi\Big)
\eeno
and
\beno
&&{\rm div}\Big(\frac12\sum_{j} \sum_{\xi \in \Lambda_{(j)}^+}a^2_{(\xi,j)}\eta^2_{(\xi)}\xi\otimes\xi\big(e^{2i\lambda_1 \xi^{\bot}\cdot x}+e^{-2i\lambda_1 \xi^{\bot}\cdot x}\big)\Big)\\
&=&\frac12\sum_{j} \sum_{\xi \in \Lambda_{(j)}^+}\xi\otimes\xi \nabla \big(a^2_{(\xi,j)}\eta^2_{(\xi)}\big)\big(e^{2i\lambda_1 \xi^{\bot}\cdot x}+e^{-2i\lambda_1 \xi^{\bot}\cdot x}\big).
\eeno
Using (\ref{e:basic property of cutoff function}), we have
\begin{align*}
T_{inter} &=\sum_{|j-j'| = 1} \sum_{\xi \in \Lambda_{(j)}^+, \xi' \in \Lambda_{(j')}^+} a_{(\xi,j)} a_{(\xi',j')}\eta_{(\xi)} \eta_{(\xi')}(W_{(\xi)} + W_{(-\xi)})\otimes (W_{(\xi')} + W_{(-\xi')})\\
& \quad + \sum_{j} \sum_{\xi, \xi' \in \Lambda_{(j)}^+, \xi \neq \xi' } a_{(\xi,j)} a_{(\xi',j)}\eta_{(\xi)} \eta_{(\xi')}(W_{(\xi)} + W_{(-\xi)})\otimes (W_{(\xi')} + W_{(-\xi')}).
\end{align*}
It follows from Proposition \ref{Prop:Beltrami-2D} that
\begin{align*}
\mathrm{div} ( W_{\xi,\xi'} \otimes W_{\xi,\xi'}) = 2\nabla \tilde{P}_{\xi, \xi'},
\end{align*}
where
\begin{align*}
W_{\xi,\xi'} &= W_{(\xi)} + W_{(-\xi)} + W_{(\xi')} + W_{(-\xi')},\\
\tilde{P}_{\xi, \xi'} &= \left|\xi\sin (\lambda \xi^{\perp}  \cdot x) + \xi' \sin (\lambda {\xi'}^{\perp}  \cdot x) \right|^2 + \left( \cos  (\lambda \xi^{\perp}  \cdot x) + \cos  (\lambda {\xi'}^{\perp} \cdot x)  \right)^2\\
&=2+2\cos (\lambda {\xi}^{\perp} \cdot x)\cos (\lambda {\xi'}^{\perp} \cdot x)
+2\xi \cdot \xi'\sin (\lambda \xi^{\perp}  \cdot x)\sin (\lambda {\xi'}^{\perp}  \cdot x).
\end{align*}
Set
\begin{align}\label{d:definition of pressue}
P_{\xi, \xi'} =2\cos (\lambda {\xi}^{\perp} \cdot x)\cos (\lambda {\xi'}^{\perp} \cdot x)
+2\xi \cdot \xi'\sin (\lambda \xi^{\perp}  \cdot x)\sin (\lambda {\xi'}^{\perp}  \cdot x),
\end{align}
then
\begin{align*}
\mathrm{div} ( W_{\xi,\xi'} \otimes W_{\xi,\xi'}) = 2\nabla P_{\xi, \xi'}.
\end{align*}
Thus, we obtain
\begin{align*}
 \mathrm{div} T_{inter} =  &\sum_{|j-j'| = 1} \sum_{\xi \in \Lambda_{(j)}^+, \xi' \in \Lambda_{(j')}^+}
\Big(  (W_{(\xi)} + W_{(-\xi)})\otimes (W_{(\xi')} + W_{(-\xi')}) \nabla  (a_{(\xi,j)} a_{(\xi',j')}\eta_{(\xi)} \eta_{(\xi')}) \\[5pt]
&+ \nabla \big(a_{(\xi,j)} a_{(\xi',j')}\eta_{(\xi)} \eta_{(\xi')}P_{\xi,\xi'}\big) - P_{\xi,\xi'} \nabla  ( a_{(\xi,j)} a_{(\xi',j')}\eta_{(\xi)} \eta_{(\xi')})\Big)\\[5pt]
+&\sum_{j} \sum_{\xi, \xi' \in \Lambda_{(j)}^+, \xi \neq \xi' }\Big(  (W_{(\xi)} + W_{(-\xi)})\otimes (W_{(\xi')} + W_{(-\xi')}) \nabla  (a_{(\xi,j)} a_{(\xi',j)}\eta_{(\xi)} \eta_{(\xi')}) \\[5pt]
&+ \nabla \big(a_{(\xi,j)} a_{(\xi',j)}\eta_{(\xi)} \eta_{(\xi')}P_{\xi,\xi'}\big) - P_{\xi,\xi'} \nabla  ( a_{(\xi,j)} a_{(\xi',j)}\eta_{(\xi)} \eta_{(\xi')})\Big).
\end{align*}
Finally, by combining the definition (\ref{e:defination of time corrector}) of $w^t_{1}$, we obtain
\ben\label{e:computation of main term}
&&{\rm div} T_{self} + {\rm div}  T_{inter} +{\rm div}\mathring{R}_0+\partial_t w_{1}^{t} = -\nabla(p_1 - p_0) + T_{1,{\rm osc}}, 
\een
where we define the new pressure $p_{1}$ such that
\beno
p_{1}-p_0&=-\Big(&\sum_{j}\rho_j\chi_j^2+\frac{1}{\mu_1}\sum_{j} \sum_{\xi \in \Lambda_{(j)}^+}\triangle^{-1}{\rm div}\partial_t\big(a^2_{(\xi,j)}\eta^2_{(\xi)}\xi\big) \\
& &+ \sum_{|j-j'| = 1} \sum_{\xi \in \Lambda_{(j)}^+, \xi' \in \Lambda_{(j')}^+} a_{(\xi,j)} a_{(\xi',j')}\eta_{(\xi)} \eta_{(\xi')}P_{\xi,\xi'}\\
 &&+  \sum_{j} \sum_{\xi, \xi' \in \Lambda_{(j)}^+, \xi \neq \xi' }a_{(\xi,j)} a_{(\xi',j)}\eta_{(\xi)} \eta_{(\xi')}P_{\xi,\xi'}\Big),
\eeno
and oscillatory term
\ben\label{e:oscillatory term definition}
T_{1,{\rm osc}}:&=&{\rm P}_{\neq 0}\Big(\sum_{j} \sum_{\xi \in \Lambda_{(j)}^+}\big(\eta^2_{(\xi)}-1\big)\xi\otimes\xi \nabla\big(a^2_{(\xi,j)}\big)\Big)\nonumber\\
&&-{\rm P}_{\neq 0}\Big(\frac{1}{\mu_1}\sum_{j} \sum_{\xi \in \Lambda_{(j)}^+}\partial_t\big(a^2_{(\xi,j)}\big)\eta^2_{(\xi)}\xi\Big)\nonumber\\
&&-\frac12\sum_{j} \sum_{\xi \in \Lambda_{(j)}^+}\xi\otimes\xi \nabla \big(a^2_{(\xi,j)}\eta^2_{(\xi)}\big)\big(e^{2i\lambda_1 \xi^{\bot}\cdot x}+e^{-2i\lambda_1 \xi^{\bot}\cdot x}\big)\nonumber\\
&&+{\rm P}_{\neq 0} \Big[ \sum_{|j-j'| = 1} \sum_{\xi \in \Lambda_{(j)}^+, \xi' \in \Lambda_{(j')}^+} \Big(- P_{\xi,\xi'} \nabla  ( a_{(\xi,j)} a_{(\xi',j')}\eta_{(\xi)} \eta_{(\xi')})  \nonumber\\
&& \quad \quad + (W_{(\xi)} + W_{(-\xi)})\otimes (W_{(\xi')} + W_{(-\xi')}) \nabla  \left( a_{(\xi,j)} a_{(\xi',j')}\eta_{(\xi)} \eta_{(\xi')}\right)\Big) \Big]\nonumber\\[5pt]
&&+{\rm P}_{\neq 0} \Big[ \sum_{j} \sum_{\xi, \xi' \in \Lambda_{(j)}^+, \xi \neq \xi' } \Big(- P_{\xi,\xi'} \nabla  ( a_{(\xi,j)} a_{(\xi',j)}\eta_{(\xi)} \eta_{(\xi')})  \nonumber\\
&& \quad \quad + (W_{(\xi)} + W_{(-\xi)})\otimes (W_{(\xi')} + W_{(-\xi')}) \nabla  \left( a_{(\xi,j)} a_{(\xi',j)}\eta_{(\xi)} \eta_{(\xi')}\right)\Big) \Big].
\een
From (\ref{e:computation of main term}), we know that $\fint_{{\rm T}^2}T_{1,{\rm osc}}(t,x)dx=0.$

 After the computation of interaction of principle perturbation, we define $\mathring{R}_{1}$ as follows:
\beno
\mathring{R}_{1}&=&\underbrace{\mathcal{R}\big(\partial_t (w^{p}_{1}+w^{c}_{1})\big)+ \mathcal{R}\big({\rm div}(v_0\otimes w_{1}+ w_{1}\otimes v_0)\big)+\mathcal{R}((-\Delta)^\alpha w_{1} )}_{R_{linear}}\\[3pt]
&&+\underbrace{\mathcal{R}\big({\rm div}(w^{p}_{1}\otimes (w^{c}_{1}+w^{t}_{1})+ (w^{c}_{1}+w^{t}_{1})\otimes w_{1})\big)}_{R_{cor}}\\[3pt]
&&+\underbrace{\mathcal{R}\big(T_{1,{\rm osc}}\big)}_{R_{osc}}-\underbrace{\mathcal{R}((\theta_{1}-\theta_{0}) e_{2})}_{R_{tem}}. 
\eeno
Since $\fint_{{\rm T}^2}\theta^0(x)dx=0,$ thus $\fint_{{\rm T}^2}\theta_{1}(t,x)dx=0$ for every $t$, hence  $\fint_{{\rm T}^2}(\theta_{1}-\theta_0)(t,x)dx=0$ for every $t$. Recalling the construction of $v_{1}, p_{1}, \theta_{1}, \mathring{R}_{1}$ and (\ref{e:computation of main term}), a direct computation gives that

\ben\label{e:new equation of new function}
{\rm div}\mathring{R}_{1}&=&\partial_t (w^{p}_{1}+w^{c}_{1})+{\rm div}(v_0\otimes w_{1}+ w_{1}\otimes v_0)+(-\Delta)^\alpha w_{1}\nonumber\\[3pt]
&&+{\rm div}(w^{p}_{1}\otimes (w^{c}_{1}+w^{t}_{1})+ (w^{c}_{1}+w^{t}_{1})\otimes w_{1})\big)\nonumber\\[3pt]
&&+T_{1,{\rm osc}}-(\theta_{1}-\theta_{0}) e_{2}\nonumber\\
&=&\partial_t (w^{p}_{1}+w^{c}_{1})+{\rm div}(v_0\otimes w_{1}+ w_{1}\otimes v_0)+(-\Delta)^\alpha w_{1}\nonumber\\[3pt]
&&+{\rm div}(w^{p}_{1}\otimes (w^{c}_{1}+w^{t}_{1})+ (w^{c}_{1}+w^{t}_{1})\otimes w_{1})\big)\nonumber\\[3pt]
&&+{\rm div}\big(w^p_{1}\otimes w^p_{1}\big)+{\rm div}\mathring{R}_0+\partial_t w_{1}^{t}+\nabla(p_{1}-p_0)-(\theta_{1}-\theta_{0}) e_{2}\nonumber\\[3pt]
&=&\partial_t w_{1}+{\rm div}(v_0\otimes w_{1}+ w_{1}\otimes v_0)+(-\Delta)^\alpha w_{1}
+{\rm div}\big(w_{1}\otimes w_{1}\big)+{\rm div}\mathring{R}_0\nonumber\\[3pt]
&&+\nabla(p_{1}-p_0)-(\theta_{1}-\theta_{0}) e_{2}\nonumber\\[3pt]
&=& \partial_t v_{1}+{\rm div}\big(v_{1}\otimes v_{1}\big)+\nabla p_{1}+(-\Delta)^\alpha v_{1}-\theta_{1} e_{2}.
\een
Thus, the new function $(v_{1}, p_{1}, \theta_{1}, \mathring{R}_{1})$ satisfies Boussinesq-Reynold equation (\ref{e:Boussinesq-Reynold equation}). Next, we prove that the error $\|\mathring{R}_{1}\|_{L^\infty_t L^1_x}$ is very small.

\subsection{Estimate on $\mathring{R}_{1}$}

In this subsection, we estimate $\mathring{R}_{1}$. We deal with it term by term. \\
\subsubsection{ Estimate on linear term $R_{linear}$:} For $1< p < 2$,
\beno
\|\mathcal{R}\big(\partial_t (w^{p}_{1}+w^{c}_{1})\big)\|_{L^p}&\leq& C(\mathring{R}_0,\delta, p)\sigma_1 \mu_1 r_1^{2-\frac{2}{p}},\\
\|\mathcal{R}((-\Delta)^\alpha w_{1})\|_{L^p}&\leq & C(\mathring{R}_0,\delta, p)\lambda_1^{2\alpha-1}r_1^{1-\frac{2}{p}},\\
\|\mathcal{R}\big({\rm div}(v_0\otimes w_{1}+ w_{1}\otimes v_0)\big)\|_{L^p}&\leq& C(\mathring{R}_0,v_0, \delta, p) r_1^{1-\frac{2}{p}}.
\eeno
\begin{proof}
Due to the $L^p$ estimate, $W^{1,p}$ estimate and  time derivative estimate in Proposition \ref{e:estimate on the pertirbation}, we deduce
\beno
&&\|\mathcal{R}\big(\partial_t (w^{p}_{1}+w^{c}_{1})\big)\|_{L^p}=\frac{1}{\lambda_1}\|\mathcal{R}\nabla^{\bot}(\partial_t w^{p}_{1}\big)\|_{L^p}\leq \frac{C_p}{\lambda_1}\|\partial_t w^{p}_{1}\|_{L^p}\leq C(\mathring{R}_0,\delta, p) \sigma_1 \mu_1 r_1^{2-\frac{2}{p}},\\[5pt]
&&\|\mathcal{R}((-\Delta)^\alpha w_{1})\|_{L^p}\leq C \|\mathcal{R}|\nabla|^{2\alpha}(w_{1})\|_{L^p} \leq C(\mathring{R}_0,\delta, p)\lambda_1^{2\alpha-1}r_1^{1-\frac{2}{p}},\\[5pt]
&&\|\mathcal{R}\big({\rm div}(v_0\otimes w_{1}+ w_{1}\otimes v_0)\big)\|_{L^p} \leq C_p\|v_0\otimes w_{1}+ w_{1}\otimes v_0\|_{L^p}\leq C(\mathring{R}_0, v_0, \delta, p) r_1^{1-\frac{2}{p}}.
\eeno
\end{proof}

\subsubsection{ Estimate on corrector term $R_{cor}$:}
For $1<p< 2$,
\beno
\|\mathcal{R}\big({\rm div}(w^{p}_{1}\otimes (w^{c}_{1}+w^{t}_{1})+ (w^{c}_{1}+w^{t}_{1})\otimes w_{1})\big)\|_{L^p}\leq C(\mathring{R}_0,\delta,p)(\sigma_1+\mu_1^{-1})r_1^{3(1-\frac{1}{p})}.
\eeno
\begin{proof}
By Proposition \ref{e:estimate on the pertirbation} and parameter relationship (\ref{r:relationship of papameter}), we get
\beno
&&\|\mathcal{R}\big({\rm div}(w^{p}_{1}\otimes (w^{c}_{1}+w^{t}_{1})+ (w^{c}_{1}+w^{t}_{1})\otimes w_{1})\big)\|_{L^p}\\[3pt]
&\leq & C_p\|w^{p}_{1}\otimes (w^{c}_{1}+w^{t}_{1})+ (w^{c}_{1}+w^{t}_{1})\otimes w_{1}\|_{L^p}\\[3pt]
&\leq& C_p\Big(\|w^{p}_{1}\otimes (w^{c}_{1}+w^{t}_{1})\|_{L^1}^{\frac{1}{p}}\|w^{p}_{1}\otimes (w^{c}_{1}+w^{t}_{1})\|_{L^\infty}^{1-\frac{1}{p}}\\[3pt]
&&+ \|(w^{c}_{1}+w^{t}_{1})\otimes w_{1}\|_{L^1}^{\frac{1}{p}} \|(w^{c}_{1}+w^{t}_{1})\otimes w_{1}\|_{L^\infty}^{1-\frac{1}{p}}\Big)\\
&\leq& C(\mathring{R}_0, \delta, p) (\sigma_1+\mu_1^{-1})r_1^{3(1-\frac{1}{p})}.
\eeno
\end{proof}

\subsubsection{Estimate on temperature term $R_{tem}$:} For $1< p <2$,
\beno
\|\mathcal{R}\big((\theta_{1}-\theta_{0}) e_{2}\big)\|_p\leq C(\theta_0, \mathring{R}_0, \delta, p)r_1^{1-\frac{2}{p}}.
\eeno
\begin{proof}
For $1< p<2$, we have
\beno
\|\mathcal{R}\big((\theta_{1}-\theta_{0}) e_{2}\big)\|_p\leq C_p \|\theta_{1}-\theta_{0}\|_p.
\eeno
From the equation (\ref{e:transport difference equation}), we try $L^p$ estimates: Multiplying $|\theta_{1}-\theta_{0}|^{p-2}(\theta_{1}-\theta_{0})$, we arrive at
\beno
&&\frac{1}{p}\frac{d}{dt}\|\theta_{1}-\theta_{0}\|^p_p+ \int_{{\rm T}^2}\nabla(\theta_{1}-\theta_{0})\cdot \nabla \big(|\theta_{1}-\theta_{0}|^{p-2}(\theta_{1}-\theta_{0})\big)dx\\
&\leq & \|v_{1}-v_{0}\|_p \|\nabla \theta_0\|_\infty \|\theta_{1}-\theta_{0}\|^{p-1}_p.
\eeno
A direct computation gives
\beno
\int_{{\rm T}^2}\nabla(\theta_{1}-\theta_{0})\cdot \nabla \big(|\theta_{1}-\theta_{0}|^{p-2}(\theta_{1}-\theta_{0})\big)dx=(p-1)\int_{{\rm T}^2}|\nabla(\theta_{1}-\theta_0)|^2 |\theta_{1}-\theta_0|^{p-2}.
\eeno
Thus, we obtain
\beno
\|\theta_{1}-\theta_{0}\|_p(t)&\leq & \int_0^t \|\nabla \theta_0\|_\infty(s)\|v_{1}-v_{0}\|_p(s)ds\\
&\leq& \int_0^1 \|\nabla \theta_0\|_\infty(s)\|v_{1}-v_{0}\|_p(s)ds,
\eeno
then by Corollary \ref{e:estimate on full perturbation}
\beno
\sup_{t\in [0,1]}\|\theta_{1}-\theta_{0}\|_p\leq \sup_{t\in [0,1]}\|\nabla \theta_0\|_{L^\infty_{x}} \sup_{t\in [0,1]} \|v_{1}-v_{0}\|_p\leq C(\theta_0, \mathring{R}_0, \delta, p)r_1^{1-\frac{2}{p}}.
\eeno
\end{proof}

\subsubsection{Estimate on the oscillatory term $\mathcal{R}_{osc}$:} For every $1< p< 2$,
\beno
\|\mathcal{R}_{osc}\|_{L^p}\leq C(\mathring{R}_0, \delta, p)(\lambda_1\sigma_1)^{-1} r_1^{2-\frac{2}{p}}.
\eeno
\begin{proof}
Recall (\ref{e:oscillatory term definition}). 
By Lemma \ref{e:oscillatory lemma} and Proposition \ref{e:estimate on building blocks}
\beno
&&\Big\|\mathcal{R}{\rm P}_{\neq 0}\Big(\sum_{j} \sum_{\xi \in \Lambda_{(j)}^+}\big(\eta^2_{(\xi)}-1\big)\xi\otimes\xi \nabla\big(a^2_{(\xi,j)}\big)\Big)\Big\|_{L^p}\\
&\leq &\sum_{j} \sum_{\xi \in \Lambda_{(j)}^+}\Big\||\nabla|^{-1}{\rm P}_{\neq 0}\Big(\big(\eta^2_{(\xi)}-1\big)\xi\otimes\xi \nabla\big(a^2_{(\xi,j)}\big)\Big)\Big\|_{L^p}\\
&\leq & C(\mathring{R}_0, \delta, p)\Big(1+\|a^2_{(\xi,j)}\|_{C^{3}}\Big)\frac{\|\eta^2_{(\xi)}-1\|_{L^p}}{\lambda_1 \sigma_1}\leq C(\mathring{R}_0, \delta, p)\frac{r_1^{2-\frac{2}{p}}}{\lambda_1\sigma_1}.
\eeno

Similarly, we have
\beno
&&\Big\|\mathcal{R}\Big(\sum_{j} \sum_{\xi \in \Lambda_{(j)}^+}\xi\otimes\xi \nabla \big(a^2_{(\xi,j)}\eta^2_{(\xi)}\big)\big(e^{2i\lambda_1 \xi^{\bot}\cdot x}+e^{-2i\lambda_1 \xi^{\bot}\cdot x}\big)\Big) \Big\|_{L^p}\\
&\leq & \Big\||\nabla|^{-1}\Big(\sum_{j} \sum_{\xi \in \Lambda_{(j)}^+}\xi\otimes\xi \nabla \big(a^2_{(\xi,j)}\big)\eta^2_{(\xi)}\big(e^{2i\lambda_1 \xi^{\bot}\cdot x}+e^{-2i\lambda_1 \xi^{\bot}\cdot x}\big)\Big) \Big\|_{L^p}\\
&&+\Big\||\nabla|^{-1}\Big(\sum_{j} \sum_{\xi \in \Lambda_{(j)}^+}\xi\otimes\xi a^2_{(\xi,j)}\nabla\big(\eta^2_{(\xi)}\big)\big(e^{2i\lambda_1 \xi^{\bot}\cdot x}+e^{-2i\lambda_1 \xi^{\bot}\cdot x}\big)\Big) \Big\|_{L^p}\\
&\leq &C(\mathring{R}_0, \delta, p)\Big(1+\|a^2_{(\xi,j)}\|_{C^{3}}\Big)\frac{\|\eta^2_{(\xi)}\|_{L^p}}{\lambda_1 }+C(\mathring{R}_0, \delta, p)\Big(1+\|a^2_{(\xi,j)}\|_{C^{3}}\Big)\frac{\|\nabla(\eta^2_{(\xi)})\|_{L^p}}{\lambda_1 }\\
&\leq& C(\mathring{R}_0, \delta, p)\sigma_1 r_1^{3-\frac{2}{p}}.
\eeno
Recalling Remark \ref{r:remark about universal constant} and (\ref{d:definition of pressue}),  we have
\begin{align*}
& \quad \Big\|\mathcal{R}{\rm P}_{\neq 0} \Big( \sum_{|j-j'| = 1} \sum_{\xi \in \Lambda_{(j)}^+, \xi' \in \Lambda_{(j')}^+}  P_{\xi,\xi'} \nabla  \left( a_{(\xi,j)} a_{(\xi',j')}\eta_{(\xi)} \eta_{(\xi')}\right)\Big)\Big\|_{L^p}
 \leq C(\mathring{R}_0, \delta, p)\sigma_1 r_1^{3-\frac{2}{p}},\\
&\quad \Big\|\mathcal{R}{\rm P}_{\neq 0}\Big( \sum_{|j-j'| = 1} \sum_{\xi \in \Lambda_{(j)}^+, \xi' \in \Lambda_{(j')}^+} (W_{(\xi)} + W_{(-\xi)})\otimes (W_{(\xi')} + W_{(-\xi')})\nabla \left( a_{(\xi,j)} a_{(\xi',j')}\eta_{(\xi)} \eta_{(\xi')}\right) \Big) \Big\|_{L^p}\\
& \leq C(\mathring{R}_0, \delta, p)\sigma_1 r_1^{3-\frac{2}{p}}.
\end{align*}
Similarly,
\begin{align*}
& \quad \Big\|\mathcal{R}{\rm P}_{\neq 0} \Big( \sum_{j} \sum_{\xi, \xi' \in \Lambda_{(j)}^+, \xi \neq \xi' }   P_{\xi,\xi'} \nabla  \left( a_{(\xi,j)} a_{(\xi',j)}\eta_{(\xi)} \eta_{(\xi')}\right)\Big)\Big\|_{L^p}
 \leq C(\mathring{R}_0, \delta, p)\sigma_1 r_1^{3-\frac{2}{p}},\\
&\quad \Big\|\mathcal{R}{\rm P}_{\neq 0}  \Big(\sum_{j} \sum_{\xi, \xi' \in \Lambda_{(j)}^+, \xi \neq \xi' }  (W_{(\xi)} + W_{(-\xi)})\otimes (W_{(\xi')} + W_{(-\xi')})\nabla \left( a_{(\xi,j)} a_{(\xi',j)}\eta_{(\xi)} \eta_{(\xi')}\right) \Big) \Big\|_{L^p}\\
& \leq C(\mathring{R}_0, \delta, p)\sigma_1 r_1^{3-\frac{2}{p}}.
\end{align*}
By Proposition \ref{e:estimate on building blocks} and Lemma \ref{e:estimate on the amplitde}, there hold
\beno
&&\Big\|\frac{1}{\mu_1}\mathcal{R}{\rm P}_{\neq 0}\Big(\sum_{j} \sum_{\xi \in \Lambda_{(j)}^+}\partial_t\big(a^2_{(\xi,j)}\big)\eta^2_{(\xi)}\xi\Big)\Big\|_{L^p}\\
&\leq& \frac{C_p}{\mu_1} \sum_{j} \sum_{\xi \in \Lambda_{(j)}^+}\|\partial_t\big(a^2_{(\xi,j)}\big)\|_{L^\infty} \|\eta^2_{(\xi)}\|_{L^p}
\leq C(\mathring{R}_0, \delta, p)r_1^{2-\frac{2}{p}}\mu_1^{-1}.
\eeno
Summing the parts and using the parameter relationship (\ref{r:relationship of papameter}), we complete the proof.
\end{proof}


Finally, collecting these term together and noticing that $\alpha < 1$, we obtain the estimate on the error term $\mathring{R}_{1}$:
\beno
&&\|\mathring{R}_{1}\|_{L^p}\\
&\leq&  C(\mathring{R}_0,\delta, v_0, \theta_0, p)\Big((\lambda_1\sigma_1)^{-1} r_1^{2-\frac{2}{p}} +r_1^{1-\frac{2}{p}}+\sigma_1\mu_1 r_1^{2-\frac{2}{p}}+\sigma_1 r_1^{3-\frac{2}{p}}+\mu_1^{-1}r_1^{3(1-\frac{1}{p})}+\lambda_1^{2\alpha -1}r_1^{1-\frac{2}{p}}\Big).
\eeno
Using the parameter relationship (\ref{r:relationship of papameter}), taking $1< p< \frac{2\alpha}{3\alpha -1}$ and noticing $\frac 12 \leq \alpha < 1$, we obtain
\beno
\|\mathring{R}_{1}\|_{L^p}
\leq  C(\mathring{R}_0,\delta, v_0, \theta_0, p)\lambda_1^{3\alpha-1-\frac{2\alpha}{p}}.
\eeno

{\bf In summary:} we have constructed smooth function $(v_{1}, p_{1}, \theta_{1}, R_{1})\in C^\infty([0,1]\times {\rm T}^2, R^2\times R\times R\times S^{2\times 2})$, they satisfies Boussinesq-Reynold equation (\ref{e:Boussinesq-Reynold equation}).
By taking $\lambda_1$ large enough, the following estimate hold: for any $t\in [0,1]$
\beno
\|v_{1}-v_0\|_{L^2}(t)&\leq& \frac{M \sqrt{\delta}}{10}, \quad \|\theta_{1}\|_{L^{\infty}_{t,x}}\leq\|\theta^0\|_{\infty},\\
\|\theta_{1}\|_{L^2}(t)&+& 2\int_0^t \|\nabla \theta_{1}\|_2^2(s) ds= \|\theta^0\|_{L^2}(t),\\
\|\theta_{1}-\theta_0\|^2_{L^2}(t)&+& \int_0^t \|\nabla (\theta_{1}-\theta_0)\|_2^2(s) ds\leq  \|\theta^0\|_\infty M^2 \delta,\\
\|\mathring{R}_{1}\|_{L^\infty_t L^1_x}&\leq& C(\mathring{R}_0,\delta, v_0, \theta_0, p)\lambda_1^{3\alpha-1-\frac{2\alpha}{p}}.
\eeno

\section{Proof of main Proposition}
In this section, we give a proof of Proposition \ref{p:iterative proposition} by combining the above construction and estimate.

Noticing that $3\alpha-1-\frac{2\alpha}{p}< 1$ for $1< p<\frac{2\alpha}{3\alpha-1}$, thus we first take $\lambda_1$ to be a integer, large enough such that
\beno
C(\mathring{R}_0,\delta, v_0, \theta_0, p)\lambda_1^{3\alpha-1-\frac{2\alpha}{p}}\leq \frac{\varepsilon_0\delta}{60000},
\eeno
Thus, there holds
\beno
\|\mathring{R}_1\|_{L^\infty_t L^1_x}\leq \frac{\varepsilon_0\delta}{20000}.
\eeno

To complete the proof of Proposition \ref{p:iterative proposition}, we only need to estimate the energy difference between $e(t)$ and $\int_{{\rm T}^2}|v_1(t,x)|^2dx.$

Direct computation gives that
\beno
\int_{{\rm T}^2}|v_1(t,x)|^2dx&= &\int_{{\rm T}^2}|v_0(t,x)+w_{1}(t,x)|^2dx\\
&=&\int_{{\rm T}^2}\Big(|v_0(t,x)|^2+|w_{1}(t,x)|^2\Big)dx + 2\int_{{\rm T}^2}v_0(t,x)\cdot w_{1}(t,x)dx.
\eeno
From the definition of $w_{1}^p$, we deduce
\beno
&&\int_{{\rm T}^2}|w_{1}^p(t,x)|^2dx = \underbrace{\frac{1}{2}\int_{{\rm  T}^2} \sum_{j} \sum_{\xi \in \Lambda_{(j)}^+} a_{(\xi,j)}^2 \eta_{(\xi)}^2\Big|W_{(\xi)}+W_{(-\xi)}\Big|^2dx}_{I}\\
&&+ \underbrace{ \frac{1}{2}\sum_{|j-j'| = 1} \sum_{\xi \in \Lambda_{(j)}^+, \xi' \in \Lambda_{(j')}^+} \int_{{\rm T}^2} a_{(\xi,j)} a_{(\xi',j')}\eta_{(\xi)} \eta_{(\xi')}\big(W_{(\xi)}+W_{(-\xi)}\big)\cdot \big(W_{(\xi')}+W_{(-\xi')}\big)dx}_{II}\\
&&+ \underbrace{ \frac{1}{2} \sum_{j} \sum_{\xi, \xi' \in \Lambda_{(j)}^+, \xi \neq \xi' } \int_{{\rm T}^2} a_{(\xi,j)} a_{(\xi',j)}\eta_{(\xi)} \eta_{(\xi')}\big(W_{(\xi)}+W_{(-\xi)}\big)\cdot \big(W_{(\xi')}+W_{(-\xi')}\big)dx}_{II}.
\eeno
Recalling (\ref{e:principle part}) and (\ref{e:basic property of cutoff function}), it's easy to deduce that
\beno
I&=&\sum_{\xi \in \Lambda_0^+}\int_{{\rm T}^2}\rho_0\chi_0^2\gamma_{\xi}^2\Big({\rm Id}-\frac{\mathring{R}_0}{\rho_0}\Big)dx + \sum_{\xi \in \Lambda_0^+}\int_{{\rm T}^2}\rho_0\chi_0^2\gamma_{\xi}^2\Big({\rm Id}-\frac{\mathring{R}_0}{\rho_0}\Big)\big(\eta^2_{(\xi)}-1\big)dx\\
&&+\frac12 \sum_{\xi \in \Lambda_0^+}\int_{{\rm T}^2}\rho_0\chi_0^2\gamma_{\xi}^2\Big({\rm Id}-\frac{\mathring{R}_0}{\rho_0}\Big)\eta^2_{(\xi)}\Big(-e^{2i \lambda_1 \xi^{\bot}\cdot x}-e^{-2i\lambda_1 \xi^{\bot}\cdot x}\Big)dx\\
&&+ \frac{1}{2}\int_{{\rm  T}^2} \sum_{j \geq 1} \sum_{\xi \in \Lambda_{(j)}^+} a_{(\xi,j)}^2 \eta_{(\xi)}^2\Big|W_{(\xi)}+W_{(-\xi)}\Big|^2dx.
\eeno
Recalling (\ref{e:identity in Geomertic constant}), there holds
\beno
\sum_{\xi \in \Lambda_0^+}\int_{{\rm T}^2}\rho_0\chi_0^2\gamma^2_{\xi}\Big({\rm Id}-\frac{\mathring{R}_0}{\rho_0}\Big)dx=2\rho_0 \int_{{\rm T}^2}\chi_0^2dx.
\eeno
Set
\beno
E(t)_{err}:&=&
\underbrace{2\int_{{\rm T}^2}v_0(t,x)\cdot w_{1}dx}_{(1)} +\underbrace{\int_{{\rm T}^2}\Big(2 w_{1}^p\cdot \big(w^c_{1}+w^t_{1}\big)+\big|w^c_{1}+w^t_{1}\big|^2\Big)dx + II}_{(2)}\\
&&+\underbrace{ \sum_{\xi \in \Lambda_0^+}\int_{{\rm T}^2}\rho_0\chi_0^2\gamma_{\xi}^2\Big({\rm Id}-\frac{\mathring{R}_0}{\rho_0}\Big)\big(\eta^2_{(\xi)}-1\big)dx}_{(3)}\\
&&+\underbrace{\frac12 \sum_{\xi \in \Lambda_0^+}\int_{{\rm T}^2}\rho_0\chi_0^2\gamma_{\xi}^2\Big({\rm Id}-\frac{\mathring{R}_0}{\rho_0}\Big)\eta^2_{(\xi)}\Big(-e^{2i \lambda_1 \xi^{\bot}\cdot x}-e^{-2i\lambda_1 \xi^{\bot}\cdot x}\Big)dx}_{(4)}\\
&&+\underbrace{\frac{1}{2}\int_{{\rm  T}^2} \sum_{j \geq 1} \sum_{\xi \in \Lambda_{(j)}^+} a_{(\xi,j)}^2 \eta_{(\xi)}^2\Big|W_{(\xi)}+W_{(-\xi)}\Big|^2dx}_{(5)}.
\eeno
Thus, combing the definition (\ref{d:definition of zero amplitude}) of $\rho_0$, we obtain
\beno
\int_{{\rm T}^2}|v_1(t,x)|^2dx&=&\int_{{\rm T}^2}|v_0(t,x)|^2dx+2\rho_0\int_{{\rm T}^2}\chi_0^2dx+E(t)_{err}\\
&=&e(t)\Big(1-\frac{\delta}{2}\Big)+E(t)_{err}.
\eeno
Next, we will show that by choosing the parameter $\lambda_1$ sufficiently large, there holds
\beno
\big|E(t)_{err}\big|\leq \frac{\delta e(t)}{8}, \quad \forall t\in [0, 1],
\eeno
thus, we obtain
\beno
\Big|e(t)\Big(1-\frac{\delta}{2}\Big)-\int_{{\rm T}^2}|v_1(t,x)|^2dx\Big|\leq \frac{\delta e(t)}{8}, \quad \forall t\in [0, 1],
\eeno
which give (\ref{e:energy estimate new}).

{\bf Estimate on $E(t)_{err}$:} We estimate $E(t)_{err}$ term by term.

{\bf Estimate on (1):} By Lemma \ref{e:improved $L^p$ inequality} and the $L^p$ estimate in Proposition \ref{e:estimate on the pertirbation}, we deduce that for every $t\in [0,1]$ and $p> 1$
\beno
|(1)|\leq  C(v_0)\|w_{1}(t)\|_{L^1} \leq C(\mathring{R}_0, v_0, \delta, p)r_1^{1-\frac{2}{p}}.
\eeno


{\bf Estimate on (2):}
By Proposition \ref{e:estimate on the pertirbation}, it's direct to get
\beno
\int_{{\rm T}^2}\big|w^c_{1}+w^t_{1}\big|^2 dx&\leq& C(\mathring{R}_0, \delta) \big((r_1 \sigma_1)^2+(r_1 \mu_1^{-1})^2\big),\\
\Big|\int_{{\rm T}^2}w_{1}^p\cdot \big(w^c_{1}+w^t_{1}\big)dx\Big| &\leq& C(\mathring{R}_0, \delta) \big(r_1 \sigma_1+r_1 \mu_1^{-1}\big).
\eeno
Then, by Lemma \ref{e:mean value estimate} and Proposition \ref{e:estimate on the pertirbation}, we deduce that for any $t\in [0,1]$ and $p> 1$
\begin{align*}
|II| & \leq C(\mathring{R}_0, \delta, p)\lambda_1^{-1}\|w_1\|_{L^{2p}}^2 \leq C(\mathring{R}_0, \delta, p)\lambda_1^{-1}r_1^{2(1-\frac{1}{p})}.
\end{align*}
Summing these term, we obtain
\beno
|(2)|\leq C(\mathring{R}_0, \delta, p)\Big(r_1 \sigma_1 + r_1 \mu_1^{-1} + \lambda_1^{-1}r_1^{2(1-\frac{1}{p})}\Big).
\eeno

{\bf Estimate on (3):} Notice that
\beno
\fint_{{\rm T}^2}\big(\eta^2_{(\xi)}-1\big)=0,
\eeno
thus,
\beno
{\rm P}_{\geq \frac{\lambda_1 \sigma_1}{2}}\big(\eta^2_{(\xi)}-1\big)= \big(\eta^2_{(\xi)}-1\big).
\eeno
Lemma \ref{e:mean value estimate}, Lemma \ref{e:pointwise estimate on cutoff function} and Lemma \ref{e:$L^2$ and derivative estimate on cutoff function} implies
\beno
|(3)|\leq C(\mathring{R}_0, \delta) \frac{1}{\lambda_1 \sigma_1}.
\eeno

{\bf Estimate on (4):} Notice the fact
\beno
{\rm P}_{\geq \frac{\lambda_1}{2}}\Big(\eta^2_{(\xi)}\Big(-e^{2i \lambda_1 \xi^{\bot}\cdot x}-e^{-2i\lambda_1 \xi^{\bot}\cdot x}\Big)\Big)=\eta^2_{(\xi)}\Big(-e^{2i \lambda_1 \xi^{\bot}\cdot x}-e^{-2i\lambda_1 \xi^{\bot}\cdot x}\Big).
\eeno
Thus, Lemma \ref{e:mean value estimate}, Lemma \ref{e:pointwise estimate on cutoff function} and Lemma \ref{e:$L^2$ and derivative estimate on cutoff function} gives
\beno
|(4)|\leq \frac{C(\mathring{R}_0, \delta)}{\lambda_1 }.
\eeno

{\bf Estimate on (5):} Recalling (\ref{e:energy of first odd part}), we obtain
\beno
|(5)|\leq \frac{\delta}{20}+  C(\mathring{R}_0, \delta) \frac{1}{\lambda_1 \sigma_1}\leq \frac{\delta e(t)}{20}+  C(\mathring{R}_0, \delta) \frac{1}{\lambda_1 \sigma_1}.
\eeno

Finally, collecting estimate (1)-(5), noticing the parameter relationship (\ref{r:relationship of papameter}), taking $p$ sufficiently close to 1 and parameter $\lambda_1$ sufficiently large, we arrive at
\beno
E_{err}(t)\leq \frac{\delta e(t)}{8}.
\eeno
This completes the proof.

{\bf Acknowledgments.}
The first author is supported in part by NSFC Grants 11601258.6.
The second author is supported by the fundamental research funds of Shandong university under Grant 11140078614006. The third author is partially supported by the Chinese NSF under Grant 11471320 and 11631008.


\begin{thebibliography}{WWW}



\bibitem{TBU}
T. Buckmaster, {\it Onsager's conjecture almost everywhere in time},
Comm. Math. Phys. 333(2015), 1175-1198.

\bibitem{BCV}
T.Buckmaster, M. Colombo and V.Vicol, {\it Wild solutions of the Navier-Stokes equations whose singular sets in time have Hausdorff dimension strictly less than 1},  arXiv:1809.00600

\bibitem{BCDLI}
T. Buckmaster, C.  De Lellis, P.  Isett,  and Sz\'{e}kelyhidi. Jr.
L, {\it Anomalous dissipation for 1/5-H\"{o}lder Euler
flows},  Ann. of. Math. 182(2015), 127-172

\bibitem{BCDL1}
T. Buckmaster, C.  De Lellis, and Sz\'{e}kelyhidi. Jr.
L, {\it Transporting microstructure and dissipative Euler flows},
arXiv:1302.2825, 2013

\bibitem{BCDL2}
T. Buckmaster, C.  De Lellis, and Sz\'{e}kelyhidi. Jr.L, {\it Dissipative Euler
flows with Onsager-critical spatial
regularity},  Comm. Pure Appl. Math, 69(2016), 1613-1670

\bibitem{BCDLV}
T. Buckmaster, C.  De Lellis,  Sz\'{e}kelyhidi. Jr.L, and V. Vicol, {\it Onsager conjecture for admissible weak solution},  to appear in Comm. Pure Appl. Math. (2018),

\bibitem{BSV}
T. Buckmaster, Shkoller, and V. Vicol, {\it Nonuniqueness of weak solutions to SQG equation}, to appear in Comm. Pure Appl. Math.

\bibitem{BV}
T. Buckmaster, and V. Vicol, {\it Nonuniqueness of weak solutions to Navier-Stokes equation}, accepted by Ann. of. math

\bibitem{Chae}
D. Chae, {\it Global regularity for the 2-D Boussinesq equation with partial viscous terms}, Adv. in. Math, 203(2006), 497-513

\bibitem{CPFR}
A. Cheskidov, P. Constantin, S. Friedlander, and R. Shvydkoy, {\it Energy conservation and Onsager's conjecture for the Euler
equations},  Nonlinearity  21(6)(2008), 1233-1252

\bibitem{CHO}
A. Choffrut, {\it H-principles for the incompressible Euler
equations},  Arch. Ration. Mech. Anal. 210(2013), 133-163.

\bibitem{PC}
P. Constantin, {\it On the Euler equation of incompressible flow},
Bull. Amer. Math. Soc. 44(4)(2007), 603-621.



\bibitem{CM}
P. Constantin, and A. Majda, {\it The Beltrami spectrum for incompressible fluid flows},
 Comm. Math. Phys. 115(1988), 435-456

\bibitem{CCDE}
S. Conti, C. De Lellis, and Sz\'{e}kelyhidi. Jr.
L,  {\it H-principle and
rigidity for $C^{1,\alpha}$ isometric embeddings} , In Nonlinear Partial Differential
 Equations vol.7 of Abel Symposia Springer (2012), 83-116.


\bibitem{CET}
P. Constantin, E. W, and Titi. E. S, {\it Onsager's conjecture on
the energy conservation for solutions of Euler's equation}, Comm. Math. Phys,
 165(1)(1994), 207-209.



\bibitem{DA}
S. Daneri, {\it Cauchy problem for dissipative H\"{o}lder solutions to the
incompressible Euler equations},  Comm. Math. Phy. (2014), 1-42.

\bibitem{DAL}
S. Daneri, and Sz\'{e}kelyhidi. Jr.
L, {\it Non-uniqueness and h-principle for H\"{o}lder continuous weak solution of Euler equation},  Arch. Ration. Mech. Anal, 224(2017), 471-514


\bibitem{CDL}
C. De Lellis, and Sz\'{e}kelyhidi. Jr.
L, {\it The Euler equation as a differential inclusion}, Ann. of. Math. 170(3)(2009), 1417-1436.

\bibitem{CDL0}
C. De Lellis, and Sz\'{e}kelyhidi. Jr.
L, {\it On admissibility criteria
for weak solutions of the Euler equations},  Arch. Ration. Mech. Anal. 195(1)(2010), 225-260.


\bibitem{CDL1}
C. De Lellis, and Sz\'{e}kelyhidi. Jr.
L,   {\it The h-principle and the
equations of fluid dynamics}, Bull. Amer. Math. Soc. 49(3)(2012), 347-375.


\bibitem{CDL2}
C. De  Lellis, and Sz\'{e}kelyhidi. Jr.L, {\it Dissipative continuous
Euler flows}, Invent. Math. 193(2)(2013), 377-407


\bibitem{CDL3}
C. De Lellis, and Sz\'{e}kelyhidi. Jr.
L, {\it Dissipative Euler flows and
Onsager's conjecture}, Jour. Eur. Math. Soc.(JEMS)16(2014), 1467-1505.

\bibitem{CDDL}
C. De Lellis, D. Inauen, and Sz\'{e}kelyhidi. Jr.
L, {\it A Nash-Kuiper theorem for $C^{1,\frac{1}{5}-\kappa}$ immersions of surfaces in 3 dimensions},
 Rev. Mat. Iberoam. 34(2018), 1119-1152

\bibitem{DUR}
J. Duchon, and R. Raoul, {\it Inertial energy dissipation for weak solutions of incompressible Euler and Navier-Stokes equations},
Nonlinearity. 13(2000), 249-255


\bibitem{Hou}
T. Hou and C. Li, {\it Global well-posedness of the viscous Boussinesq equations}, DCDS, Series A, 12(2005), 1-12.


\bibitem{ISOH1}
P. Isett, and Oh, S.-J, {\it A heat flow approach to Onsager's conjecture for the Euler equations on manifolds},
Trans. Amer. Math. Soc, 368(2016), 6519-6537.

\bibitem{ISOH2}
P. Isett, Oh, S.-J, {\it On nonperiodic Euler flows with H\"{o}lder regularity}, Arch. Ration. Mech. Anal, 221(2016), 725–804.


\bibitem{IS1}
P. Isett,  {\it H\"{o}lder continuous Euler
flows in three dimensions with
compact support in time}, arXiv:1211.4065, 2012.

\bibitem{IS2}
P. Isett,  {\it A proof of Onsager's conjecture}, Ann. of. Math. 188(2018), 871-963

\bibitem{IS3}
P. Isett, {\it On the Endpoint Regularity in Onsager's Conjecture},  arXiv:1706.01549

\bibitem{ISV2}
P. Isett, and V. Vicol, {\it H\"{o}lder continuous solutions of active scalar equations},
 Ann. of. PDE, DOI 10.1007/s40818-015-0002-0


\bibitem{LT}
T. Luo, and Titi, {\it Non-uniqueness of Weak Solutions to Hyperviscous Navier-Stokes Equations - On Sharpness of J.-L. Lions Exponent},  arXiv:1808.07595

\bibitem{LTZ}
T. Luo, T. Tao and L.Zhang, {\it H\"{o}lder continuous soltion of 2d Boussinesq equation with diffusive temperture}, priprint

\bibitem{LX}
T. Luo, and Z. Xin , {\it H\"{o}lder continuous solutions to the 3d Prandtl system}, arXiv:1804.04285

\bibitem{L}
Xiaoyutao Luo, {\it Stationary solution and nonuniquenes of weak solution for the Navier-Stokes euation on high dimensions}, arXiv:1807.09318

\bibitem{Ma}
A. Majda,  {\it  Introduction to PDEs and Waves for the Atmosphere and Ocean},
Courant
Lecture Notes in Mathematics, Vol. 9. AMS/CIMS, 2003

\bibitem{MS1}
S.Modena and L\'{a}szl\'{o} Sz\'{e}kelyhidi Jr, {\it Non-uniqueness for the transport equation with Sobolev vector fields}, arXiv:1712.03867

\bibitem{MS2}
S.Modena and L\'{a}szl\'{o} Sz\'{e}kelyhidi Jr, {\it Non-Renormalized solution to the continuity equation}, arXiv:1806.09145

\bibitem{NASH}
J. Nash, {\it $C^{1}$ isometric embeddings},  Ann. of. Math. 60(1954), 383-396.

\bibitem{ONS}
L. Onsager, {\it Statistical hydrodynamics}, Nuovo Cimento(9)(1949), 279-287.

\bibitem{Pe}
J. Pedlosky, {\it Geophysical fluid dynamics}, Springer, New-York, 1987

\bibitem{VS}
V. Scheffer,  {\it An inviscid flow with compact support in space-time},
J. Geom. Anal. (1993),343-401.


\bibitem{ASH1}
A. Shnirelman, {\it Weak solution with decreasing energy of incompressible Euler equations}, Comm. Math. Phys. 210(2000), 541-603

\bibitem{ASH2}
A. Shnirelman, {\it On the nonuniqueness of weak solution of Euler equation}, Comm. Pure Appl. Math. 50(12)(1997), 1261-1286







\bibitem{RSH}
R. Shvydkoy, {\it Convex integration for a class of active scalar equations}, J. Amer. Math. Soc. 24(4)(2011), 1159-1174


\bibitem{SH}
R. Shvydkoy, {\it Lectures on the Onsager conjecture},  Dis. Con. Dyn. Sys. 3(3)(2010), 473-496.

\bibitem{Sz}
Jr. L. Sz\'{e}kelyhidi, {\it From Isometric Embeddings to Turbulence}, Lecture note, 2012.

\bibitem{TZ}
T. Tao, and L. Zhang, {\it On the continuous periodic weak solution of Boussinesq equations}, SIAM, J.Math.Anal, 50(2018), 1120-1162

\bibitem{TZ1}
T. Tao, and L. Zhang, {\it H\"{o}lder continuous solution of Boussinesq equations with compact support}, J. Funct. Anal, 272(2017), 4334-4402.

\bibitem{TZ2}

T. Tao, and L. Zhang, {\it H\"{o}lder continuous periodic solution of Boussinesq equations with partial viscosity}, Calc. Var. (2018)
https://doi.org/10.1007/s00526-018-1337-7



\end{thebibliography}
\end{document}